\documentclass[10pt]{amsart}
\usepackage{graphicx,enumitem,dsfont}
\usepackage[colorlinks]{hyperref}
\usepackage{subfigure}
\usepackage{pdfsync}
\usepackage{upref}
\usepackage{a4wide}

\usepackage{amscd}
\usepackage{amssymb}
\usepackage{amsmath,amssymb,amsthm}

\newcommand{\R}{\mathbb{R}}
\newcommand{\Z}{\mathbb{Z}}

\newcommand{\seq}[1]{\left\{#1\right\}}

\newcommand{\Dx}{{\Delta x}}
\newcommand{\Dt}{{\Delta t}}
\newcommand{\norm}[1]{\left\|#1\right\|}
\newcommand{\abs}[1]{\left|#1\right|}
\newcommand{\sgn}[1]{\mathrm{sign}\left(#1\right)}

\newcommand{\Dp}{D_{+}}

\newcommand{\Dpt}{D_+^t}

\newcommand{\eps}{\varepsilon}
\newcommand{\CL}{\mathcal{L}}
\newcommand{\N}{\mathbb{N}}
\newcommand{\CMloc}{\mathcal{M}_{\mathrm{loc}}}
\newcommand{\Dm}{D_-}

\newcommand{\loc}{{\mathrm{loc}}}

\newcommand{\Div}{\mathrm{div}}
\newcommand{\Curl}{\mathrm{curl}}
\newcommand{\st}{\;\bigm|\;}

\newcommand{\fhp}{\hat{f}^n_{j+\frac12}}
\newcommand{\fhm}{\hat{f}^n_{j-\frac12}}

\newcommand{\kp}{k_{j+\frac12}}
\newcommand{\km}{k_{j-\frac12}}

\newcommand{\hp}{h_{j+\frac12}}
\newcommand{\hm}{h_{j-\frac12}}

\newcommand{\xp}{x_{j+\frac12}}
\newcommand{\xm}{x_{j-\frac12}}

\newcommand{\bigO}[1]{\ensuremath{\mathop{}\mathopen{}\mathcal{O}\mathopen{}\left(#1\right)}}

\newtheorem{definition}{Definition}[section]

\newtheorem{theorem}{Theorem}[section]
\newtheorem{lemma}{Lemma}[section]

\newtheorem{remark}{Remark}[section]
\theoremstyle{definition} 

\newtheorem*{maintheorem*}{Main Theorem}

\allowdisplaybreaks

\numberwithin{equation}{section}
\numberwithin{figure}{section}
\numberwithin{table}{section}

\newcounter{asnr}

\newenvironment{Assumptions} %
{\ifnum\value{asnr}=0 \stepcounter{asnr} 
  \begin{enumerate}[label=\textbf{A}.\arabic{enumi}]
    \else
    \begin{enumerate}[label=\textbf{B}.\arabic{enumi},resume] \fi}
{\end{enumerate}}

\newcounter{defnr}

{\ifnum\value{defnr}=0 \stepcounter{defnr} 
  \begin{enumerate}[label=\textbf{D}.\arabic{enumi}]
    \else
    \begin{enumerate}[label=\textbf{D}.\arabic{enumi},resume] \fi}
{\end{enumerate}}

\title[Diffusive-Dispersive Approximation]{Convergence of Fully discrete schemes for diffusive dispersive conservation laws with \\discontinuous coefficient}

\author[R. Dutta]{Rajib Dutta} \address[Rajib
Dutta]{\newline 
    Institut f\"{u}r Mathematik,  \newline Julius-Maximilians-Universit\"{a}t W\"{u}rzburg,  
 \newline Campus Hubland Nord, Emil-Fischer-Strasse 30, \newline 97074, W\"{u}rzburg, Germany.}
\email[]{rajib.ami@gmail.com}

\author[U. Koley]{Ujjwal Koley} \address[Ujjwal Koley] {\newline  
   Centre For Applicable Mathematics (CAM)  \newline
   Tata Institute of Fundamental Research  
\newline P.O. Box 6503, GKVK post office \newline
Bangalore-560065, India.} 
\email[]{ujjwal@math.tifrbng.res.in}

\author[D. Ray]{Deep Ray} \address[Deep Ray] {\newline  
   Centre For Applicable Mathematics (CAM)  \newline
   Tata Institute of Fundamental Research  
\newline P.O. Box 6503, GKVK post office \newline
Bangalore-560065, India.} 
\email[]{deep@math.tifrbng.res.in}

\keywords{conservation laws; discontinuous flux; diffusive-dispersive approximation; finite difference scheme; convergence; entropy condition; nonclassical shock}

\date{\today}

\begin{document}

\begin{abstract}
We are concerned with fully-discrete schemes
for the numerical approximation of diffusive-dispersive hyperbolic conservation
laws with a discontinuous flux function in one-space dimension. 
More precisely, we show the convergence of approximate solutions,
generated by the scheme corresponding to vanishing diffusive-dispersive scalar conservation laws with a
discontinuous coefficient, to the corresponding scalar conservation law with
discontinuous coefficient.  Finally, the convergence is illustrated by
several examples. In particular, it is delineated that the limiting solutions generated
by the scheme need not coincide, depending on the relation 
between diffusion and the dispersion coefficients, with the classical Kru\v{z}kov-Ole\u{i}nik
entropy solutions, but contain nonclassical undercompressive shock waves.
\end{abstract}

\maketitle


\section {Introduction}
In this paper, we consider a finite difference method 
for the vanishing diffusive-dispersive approximations of scalar 
conservation laws with a discontinuous flux 
\begin{equation}
\label{eq:system}
\begin{cases}
u^{\eps}_t + f\left(k(x),u^{\eps}\right)_x = \mathcal{R}\,[\eps, \mu(\eps); u^{\eps}], &\ \ x \in  \R \times (0,T),\\
u^{\eps}(x,0)=u_0(x), &\ \ x \in \R,
  \end{cases}
\end{equation}
when $\eps >0$ tends to zero with $0<\mu(\eps) \mapsto 0$ as $\eps \mapsto 0$.
Here $\mathcal{R}\,[\eps, \mu(\eps); u^{\eps}]$ is a regularization term,
depends upon two parameters
$\eps$ and $\mu(\eps)$ referred to as the diffusion and
the dispersion coefficients, motivated by 
the equations of two-phase flow in porous media,
$T>0$ is fixed, $u^{\eps}: \R \times
[0,T) \mapsto \R$ is the unknown scalar map, 
$u_0$ the initial data, $k: \R \mapsto \R$ is a spatially varying (discontinuous) coefficient, 
and the flux function $f : \R^2 \mapsto \R$
is a sufficiently smooth scalar function (see Section ~\ref{sec:math} for the complete list of assumptions).

Motivated by the dynamic capillary pressure \cite{hass}, we consider in this paper the simplified model
\begin{align}
\label{eq:imp1}
\mathcal{R}\,[\eps, \mu(\eps); u^{\eps}] = \eps \beta \,u^{\eps}_{xx} + \mu(\eps) \gamma \,u^{\eps}_{xxt}
\end{align}
with a third-order mixed derivatives term including one time derivative. Here $\beta, \gamma >0$ are fixed parameters. The equation \eqref{eq:system} along with \eqref{eq:imp1} serves as a concrete model of two phase flows in a heterogeneous porous medium.

Moreover, drawing preliminary motivation from phase transition dynamics, we also consider the following specific form 
of the regularization term
\begin{align}
\label{eq:imp2}
\mathcal{R}\,[\eps, \mu(\eps); u^{\eps}] = \eps \beta \,u^{\eps}_{xx} + \mu(\eps) \gamma \,u^{\eps}_{xxx},
\end{align}
with $\beta, \gamma >0$ fixed. 

Furthermore, for the simplicity in the exposition, we assume that the flux function has the following particular form 
\begin{align*}
f(k(x), u) = k(x) f(u).
\end{align*}
Note that the flux $k(x) f(u)$ has a possibly discontinuous spatial dependence through the
coefficient $k$, which is allowed to have jump discontinuities.

The scalar conservation laws
with a discontinuous flux function
\begin{align}
\label{eq:discont}
u_t + f(k(x), u)_x =0
\end{align}
is a special example of this type of problems,
corresponds to the case $\beta = \gamma=0$. A simple physical model corresponding to
\eqref{eq:discont} is the Witham model of car traffic flow on a highway 
(consult the monograph by Leveque \cite{lev}), where the
spatially varying coefficient $k$ corresponds to changing road conditions.
Several other models such as two phase
flow in a heterogeneous porous medium that arise in petroleum industry, 
the modeling of the clarifier thickener unit used 
in waste water treatment plants are also corresponding to \eqref{eq:discont}.

Independently of the smoothness of the initial data $u_0$ and $k$, 
solutions to \eqref{eq:discont} are not necessarily smooth due to the presence of nonlinear flux term in the equation \eqref{eq:discont}. Thus, weak solutions must be sought. 
\begin{definition}[Weak solution]
A weak solution of the initial value problem \eqref{eq:discont} is a bounded measurable function $u: \R \times [0,T) \rightarrow \R$ satisfying
\begin{equation}
\begin{aligned}
\label{eq:weak}
\int_{\R} \int_0^T \left(  \varphi_t u + \varphi_x k(x) f(u) \right) \,dx\,dt + \int_{\R} \varphi(x,0) u_0(x) \,dx =0,
\end{aligned}
\end{equation}
\end{definition}
for all $\varphi \in C_0^{\infty} (\R \times [0,T))$.

It is well known that (weak) solutions may be discontinuous and they
are not uniquely determined by their initial data. Consequently, an entropy condition must be
imposed to single out the physically correct solution. If 
 $k(x)$ is ``smooth", a weak solution $u$ satisfies 
the entropy condition if for all convex $C^2$ functions $\eta: \R \rightarrow \R$
\begin{align*}
\eta(u)_t + \left( k(x) Q(u) \right)_x + k'(x) \left(  \eta'(u) f(u) - Q(u) \right) \le 0, \,\, \text{in}\,\, \mathcal{D}(\R \times [0,T]),
\end{align*}
where $Q:\R \rightarrow \R$ is defined by $Q'(u) = \eta'(u) f'(u)$.

By standard limiting argument, this implies the Kru\v{z}kov-type entropy condition
\begin{align*}
\abs{u-c}_t + \sgn{u-c} \left( k(x) (f(u) -f(c)) \right)_x
+ \sgn{u-c} f(c) k'(x)  \le 0, \,\, \text{in}\,\, \mathcal{D}(\R \times [0,T]),
\end{align*}
holds for all $c \in \R$.

However, the notion of entropy solution described above breaks down when 
$k(x)$ is discontinuous. In view of \cite{kenneth1}, we use the following notion of entropy solution 
for (one-dimensional) conservation laws with discontinuous flux
equations with coefficients that are only spatially dependent. We assume that
the spatially varying coefficient $k(x)$ is piecewise $C^1$ with
finitely many jumps (in $k$ and $k'$), located at $\xi_1, \xi_2, \cdots, \xi_M$.
\begin{definition}[Entropy solution]
A weak solution $u$ of the initial value problem \eqref{eq:discont} is called
an entropy solution, if the following Kru\v{z}kov-type entropy inequality holds for all
$c \in \R$ and all test functions $0 \le \psi \in \mathcal{D}(\R \times [0,T]) $. 
\begin{equation}
\label{eq:ent}
\begin{aligned}
\int_{\R}& \int_0^T  \left( \abs{u-c} \psi_t + \sgn{ u-c}  k(x) \left(f(u) -f(c) \right) \psi_x \right) \,dx\,dt 
+ \int_{\R} \abs{u_0 -c} \psi(x,0)\,dx \\
& \quad+  \int_{\R \setminus \seq{\xi_m}_{m=1}^M } \int_0^T \sgn{u-c} k'(x) \,f(c)\,\psi \,dx\,dt 
+  \sum_{m=1}^{M} \int_0^T \abs{ f(c) (k_m^{+} - k_m^{-})} \psi(\xi_m, t) \,dt \ge 0.
\end{aligned}
\end{equation}
\end{definition}
The last couple of decades have witnessed remarkable advances in the studies
of conservation laws with discontinuous flux function. However, we will not be able
to discuss the whole literature here, but only refer to the parts that are pertinent to
the current paper. In case of ``smooth" $k(x)$, the notion of entropy solution was 
introduced independently by Kru\v{z}kov \cite{kruzkov} and Vol'pert \cite{Volpert}
(the latter author considered the smaller BV class). These authors also proved general existence,
uniqueness, and stability results for the entropy solution, see also Ole\u{i}nik \cite{Oleinik}
for similar results in the convex case $f_{uu} \ge 0$.

\subsection{Diffusive Dispersive Approximation }
It is well known that the conservation law \eqref{eq:discont} is derived by 
neglecting underlying small
scale effects such as diffusion, dispersion, capillarity etc., and
may admit physically relevant discontinuous
solutions containing shock waves 
(non-classical shock) that may depend on underlying small-scale mechanisms.
It has been successively recognized that a standard entropy 
inequality (due to Kru\v{z}kov, Ole\u{i}nik, and others) does
not suffice to single out such a physically relevant solution, and it is 
important to incorporate these small-scale
effects in the entropy condition. In other words, 
additional admissibility criteria (a kinetic relation) are required in order to characterize
these small-scale dependent non classical shock waves uniquely. 
In \cite{lefloch1}, the authors developed a framework for the existence and uniqueness of the
non-classical shock waves that arise as limits of diffusive-dispersive approximations.

Noting that the solutions to \eqref{eq:discont} can be different due to their explicit dependence on the underlying small scale effects, we focus on a concrete model of two phase flow in porous medium (for a brief derivation of this model consult \cite{sid}).
The relevant small scale effect is a dynamic capillary 
pressure term, that was introduced in \cite{hass}. Compared to the 
standard capillary pressure models \cite{aziz}, the addition of the 
new term resulted in a model that contain
higher-order mixed spatio-temporal derivatives (cf. \eqref{eq:imp1}).
 
The diffusive-dispersive model has a long tradition, starting with the analysis of 
linear diffusion-dispersion model \eqref{eq:system}. 
A pioneering study of the effect of vanishing diffusion and dispersion terms in
scalar conservation laws, with $x$-independent flux function, can be found in Schonbek \cite{schonbek}. The technique of compensated compactness was used to prove convergence toward weak solutions. 
Kondo and LeFloch \cite{kondo} studied zero diffusion-dispersion limits for $x$-independent fluxes under an optimal balance between the sizes
of the diffusion and dispersion parameters. LeFloch and Natalini \cite{lefloch2} used the concept of 
measure-valued solution and established convergence
results assuming that the diffusion dominates the dispersion. Subsequently, the approach of kinetic decomposition 
and velocity averaging \cite{perthame} was introduced
by Hwang and Tzavaras \cite{hwang} to analyze singular limits including nonclassical shock waves.
Furthermore, we also mention related works by Wu \cite{wu} and Jacobs, McKinney,
and Shearer \cite{jacobs} which provides the first
existence result of undercompressive shocks for the modified Korteweg-de Vries-Burgers equation.

It is well known that the relative scaling between $\eps$ and $\mu(\eps)$ 
determines the limiting behavior of solutions, and we can distinguish 
between three cases:
\begin{itemize}
\item Diffusion-dominant regime $\mu(\eps) < < \eps^2$: The 
qualitative behavior of solution is same as the solution of the 
conservation laws. 
\item Dispersion-dominant regime $\mu(\eps) > > \eps^2$: In this case, high
oscillations develop (as $\eps \downarrow 0$) especially in regions 
of steep gradients of the solutions and only weak convergence is observed.
\item Balanced diffusion-dispersion regime: This typically corresponds to the scenario where $\mu(\eps) = \mathcal{O} (\eps^2)$. Only mild oscillations are observed near
shocks, and the limit solution is a weak solution to conservation laws.
Most importantly, in this case, the solution exhibit non-classical 
behavior, as they contain undercompressive shocks. However, for the regime $ \mu(\eps)= \scalebox{1.1}{$\scriptstyle\mathcal{O}$}({\eps^2})$, the limit
solution coincides with the entropy solution determined by Kru\v{z}kov theory \cite{kruzkov}.
\end{itemize}

\subsection{Numerical Schemes}

It is well known that standard finite difference, finite volume and finite element methods
have been very successful in computing solutions to hyperbolic conservation
laws with discontinuous coefficients, including those containing shock waves. 
However, we mention that most of these well-established
methods are proven to be not good enough to capture nonclassical shock wave solutions 
numerically. This well known phenomena has been explained by many 
authers (Hou and LeFloch \cite{hou}, Hayes and LeFloch \cite{hayes}, and others) 
in terms of the equivalent equation associated with discrete schemes through a formal Taylor
expansion. The key idea behind capturing nonclassical shocks is 
to design finite difference schemes whose equivalent equation
matches, both, the diffusive and the dispersive terms (cf. \eqref{eq:imp2}) in the underlying model. 
However, these
schemes fail to approximate nonclassical solutions with large amplitude, especially strong
shocks due to lack of control on higher order error terms present in equivalent equation. A recent
work by Ernest et al. \cite{ernest} has overcome such problems 
by dominating higher order error terms in amplitude by the leading order terms 
of the equivalent equation.

In another paper by Chalons and Lefloch \cite{chalons}, the authors introduced a fully -discrete scheme for the numerical approximation of diffusive-dispersive hyperbolic conservation
laws (cf. \eqref{eq:system}-\eqref{eq:imp2}) in one-space dimension. 
An important feature of their scheme is that it satisfies a cell entropy inequality and, as a consequence,
the space integral of the entropy is a decreasing function of time. Moreover, they showed that the limiting solutions generated by the scheme contains nonclassical undercompressive shock waves.

On the other hand, there is a sparsity
of efficient numerical schemes for \eqref{eq:system}-\eqref{eq:imp1} available 
in the literature. In fact, to the best of our
knowledge, this is the first systematic attempt to construct a provably convergent 
numerical scheme for \eqref{eq:system}-\eqref{eq:imp1}.
Having said this, there are some numerical experiments available in the final section of
the recent paper by Coclite et al. \cite{sid} without rigorous analysis of the scheme.

\subsection{Scope and Outline of the Paper}
In view of the above discussion, it is fair to claim that there are no robust
and provably stable numerical schemes currently available to simulate the
vanishing capillarity approximations
of scalar conservation laws equation \eqref{eq:system}-\eqref{eq:imp1}. 
In this context, we consider a fully-discrete (in both space and time)
finite difference scheme for \eqref{eq:system}-\eqref{eq:imp1} which
is provably convergent and able to capture non classical shocks quite well.
Since diffusion-dispersion model for the conservation laws with discontinuous
flux has not been studied in detail, we analyze a fully-discrete scheme for \eqref{eq:system}-\eqref{eq:imp2} as well.
While there are several numerical
methods which perform well in practice, perhaps better than the one presented here, (see \cite{sid1} for a recent comparison of diferent numerical methods) we emphasize that
we prove the convergence of the schemes proposed in this paper. Here, we mention that
a detailed analysis of the scheme introduced by Ernest et al. \cite{ernest} 
is beyond the scope of this paper, and 
will be the topic of an upcoming paper.

To sum up, the schemes in the present paper have the following properties:
\begin{itemize}
\item [(a)] Approximate solutions for \eqref{eq:system}-\eqref{eq:imp1}, generated by the
scheme \eqref{eq:scheme}, converge to the unique entropy solution of \eqref{eq:discont}
as long as $ \mu(\Dx)= \scalebox{1.1}{$\scriptstyle\mathcal{O}$}({\Dx})$. 
A scheme (cf. \eqref{eq:scheme_a})
has been formulated for  \eqref{eq:system}-\eqref{eq:imp2} and the same techniques can be 
applied, mutatis mutandis,to prove convergence of approximate solutions
to the unique entropy solution of \eqref{eq:discont}.
\item [(b)] Approximate solutions for \eqref{eq:system}-\eqref{eq:imp1} have been  shown to converge to weak
solutions of \eqref{eq:discont}, when $\mu(\Dx) = \mathcal{O} (\Dx^2)$.
Moreover, we show numerically that the limiting solutions generated
by the schemes \eqref{eq:scheme} and \eqref{eq:scheme_a} 
contain nonclassical undercompressive shock waves.
\end{itemize}

The rest of the paper is organized as follows: In section~\ref{sec:math}, 
we present the mathematical framework 
used in this paper. In particular, we have used a compensated
compactness result in the spirit of Tartar \cite{Tartar} but the proof is based on div-curl lemma
and does not rely on the Young measure. Section~\ref{sec:semi} introduces 
the fully-discrete finite difference scheme for \eqref{eq:system}-\eqref{eq:imp1} . In section~\ref{sec:energy}, we derive a priori
estimates for the approximate solutions and a detailed convergence analysis towards weak solutions
of \eqref{eq:discont} has been discussed in section~\ref{sec:convergence}. Convergence 
towards the unique entropy solution has been considered in section~\ref{sec:entropy},
while a brief discussion on the results for diffusive-dispersive approximation 
\eqref{eq:system}-\eqref{eq:imp2} has been addressed in section~\ref{ap:dif_dis}.
Finally, numerical results are presented in section~\ref{sec:numerical} to illustrate the performance of the designed schemes.

\section{Mathematical Framework}
\label{sec:math}
In this section, we list all the assumptions on the data for the problem \eqref{eq:system}, and present relevant mathematical tools to be used in the subsequent analysis. 
Throughout this paper we use the letters $C,\,K$ etc. to denote various generic constants independent of approximation parameters, which may change line to line, but the notation is kept unchanged so long as it does not impact the central idea.

The basic assumptions on the data of the problem \eqref{eq:system} are as follows:

\begin{Assumptions}
\item \label{def:w1} For the initial function $u_0: \R \mapsto \R$, we assume that
\begin{align*}
u_0 \in L^2(\R) \cap L^{\infty}(\R), \quad a \le u_0(x) \le b, \,\, \text{for a.e}\,\,x \in \R;
\end{align*}
\item \label{def:w2} For the discontinuous coefficient $k:\R \mapsto \R$, we assume that
  \begin{align*}
  k \in L^{\infty}(\R) \cap BV_{\loc}(\R), \quad \alpha \le k(x) \le \beta, \,\, \text{for a.e}\,\,x \in \R;
  \end{align*}
\item \label{def:w3} Regarding the flux function $f:[\alpha, \beta] \times [a,b] \mapsto \R$, we assume that
\begin{align*}
u \mapsto f(k,u) &\in C^2([a,b]), \,\, \text{for all} \,\,k \in [\alpha, \beta], \\
k\mapsto f(k,u) &\in C^1([\alpha,\beta]),\,\, \text{for all} \,\,u \in [a,b];
\end{align*}
\item \label{def:w4} Furthermore, we assume that $u \mapsto f(k,u)$ is genuinely nonlinear a.e. in $\R \times [0,T]$, i.e., 
$
f_{uu}(k(x),u) \neq 0$, for a.e. $u\in [a,b]$.
\end{Assumptions}
\begin{remark}
It is worth mentioning that the Assumption ~\ref{def:w4} is typically required in the \emph{compensated compactness} framework. 
This condition also imposes a condition on the coefficient 
$k(x)$. In fact, it implies that $f(u)$ is genuinely nonlinear (i.e., $f'' \neq 0$) and 
$\abs{k(x)} \neq 0$, for a.e. $x\in \R$.
\end{remark}

Next, we recapitulate the results required from the \emph{compensated
compactness} method due to Murat and Tartar \cite{Murat,Tartar}. For a
nice overview of applications of the compensated compactness method to
hyperbolic conservation laws, we refer to Chen \cite{Chen}.  Let
$\mathcal{M}(\R)$ denote the space of bounded Radon measures on $\R$
and
\begin{align*}
  C_0(\R) = \seq{ \psi \in C(\R) \st \lim_{\abs{x} \rightarrow \infty}
  \psi(x) =0 }.
\end{align*}
If $\mu \in \mathcal{M}(\R)$, then
\begin{align*}
  \langle \mu, \psi \rangle = \int_{\R} \psi \,d \mu, \quad \text{for
    all} \quad \psi \in C_0(\R).
\end{align*}
Recall that $\mu \in \mathcal{M}(\R)$ if and only if $ \abs{\langle
  \mu, \psi \rangle} \le C \norm{\psi}_{L^{\infty}(\R)} $ for all
$\psi \in C_0(\R)$. We define the norm
\begin{align*}
  \norm{\mu}_{\mathcal{M}(\R)} := \sup \Big\{\abs{\langle \mu, \psi
      \rangle}: \psi \in C_0(\R), \norm{\psi}_{L^{\infty}(\R)} \le 1
    \Big\}.
\end{align*}
The space $\left( \mathcal{M}(\R), \norm{\cdot}_{\mathcal{M}(\R)}
\right)$ is a Banach space and it is isometrically isomorphic to the
dual space of $\left(C_0(\R), \norm{\cdot}_{L^{\infty}(\R)} \right)$. Furthermore, we define the space of probablity measures
\begin{align*}
  \text{Prob}(\R) := \Big\{ \mu \in \mathcal{M}(\R): \mu \, \text{is
    nonnegative and} \, \norm{\mu}_{\mathcal{M}(\R)}=1 \Big\}.
\end{align*}
Before we state the compensated compactness theorem, we shall recall the
celebrated div-curl lemma \cite{Murat}.
\begin{lemma}[div-curl lemma]
  \label{lem:div}
Let $\Omega$ be a bounded open subset of $\R^2$. Suppose 
\begin{align*}
u^1_{\Dx} \rightharpoonup \overline{u}^1, \quad u^2_{\Dx} \rightharpoonup \overline{u}^2,  \quad v^1_{\Dx} \rightharpoonup \overline{v}^1, \,\, \text{and} \,\, v^2_{\Dx} \rightharpoonup \overline{v}^2,
\end{align*}
in $L^2(\Omega)$ as $\Dx \downarrow 0$.  Furthermore, assume that the two sequences 
$\Big\{\Div\left(u^1_{\Dx}, u^2_{\Dx}\right) \Big\}_{\Dx>0}$ and $\Big\{\Curl\left(v^1_{\Dx}, v^2_{\Dx}\right) \Big\}_{\Dx>0}$ lie 
in a (common) compact subset of $H^{-1}_{\mathrm{loc}}(\Omega)$, where 
$\Div\left(u^1_{\Dx}, u^2_{\Dx}\right) = \partial_{x_1} u^1_{\Dx} +  \partial_{x_2} u^2_{\Dx}$
and $\Curl\left(v^1_{\Dx}, v^2_{\Dx}\right) = \partial_{x_1} v^2_{\Dx} -  \partial_{x_2} v^1_{\Dx}$. Then along a 
subsequence \vspace{0.1cm}
\begin{align*}
\left(u^1_{\Dx}, u^2_{\Dx}\right) \cdot \left(v^1_{\Dx}, v^2_{\Dx}\right) \mapsto \left(\overline{u}^1, \overline{u}^2\right) \cdot \left(\overline{v}^1, \overline{v}^2\right),
\,\, \text{in} \,\, \mathcal{D}'(\Omega), \,\,\text{as}\,\, \Dx \downarrow 0.
\end{align*}
\end{lemma}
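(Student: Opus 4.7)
The plan is to carry out the classical Murat--Tartar argument via a Helmholtz decomposition in the plane. First I would reduce to the case $\bar u^{i}=\bar v^{i}=0$ by subtracting the weak limits, which leaves both compactness hypotheses unchanged. Fixing an arbitrary $\varphi\in C_{c}^{\infty}(\Omega)$, the task becomes to show
\[
I_{\Dx}:=\int_{\Omega}\varphi\,\bigl(u^{1}_{\Dx}v^{1}_{\Dx}+u^{2}_{\Dx}v^{2}_{\Dx}\bigr)\,dx\longrightarrow 0.
\]
Set $\tilde v_{\Dx}:=\varphi\,v_{\Dx}$, extended by zero to $\R^{2}$, so $\tilde v_{\Dx}\weakto 0$ in $L^{2}(\R^{2})$. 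A Leibniz expansion gives $\Curl\tilde v_{\Dx}=\varphi\,\Curl v_{\Dx}+v_{\Dx}\cdot(\nabla\varphi)^{\perp}$; the first term inherits $H^{-1}(\R^{2})$-precompactness from the hypothesis (plus the compact support of $\varphi$), and the second is bounded in $L^{2}$ with fixed compact support, hence precompact in $H^{-1}(\R^{2})$ via the dual of the Rellich--Kondrachov embedding. Combined with $\tilde v_{\Dx}\weakto 0$, this forces $\Curl\tilde v_{\Dx}\to 0$ strongly in $H^{-1}(\R^{2})$.

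Next I would invoke the Helmholtz decomposition on $\R^{2}$, writing $\tilde v_{\Dx}=\nabla\phi_{\Dx}+\nabla^{\perp}\psi_{\Dx}$ with both pieces bounded in $L^{2}(\R^{2})$ by $\|\tilde v_{\Dx}\|_{L^{2}}$ and satisfying $\Delta\phi_{\Dx}=\Div\tilde v_{\Dx}$, $\Delta\psi_{\Dx}=\Curl\tilde v_{\Dx}$. The crucial claim is that $\nabla^{\perp}\psi_{\Dx}\to 0$ strongly in $L^{2}_{\loc}(\R^{2})$. After normalizing $\psi_{\Dx}$ to have mean zero on a ball $B\supset\supp\varphi$, Poincar\'e--Wirtinger plus the $L^{2}$-bound on $\nabla\psi_{\Dx}$ makes $\psi_{\Dx}$ bounded in $H^{1}(B)$; Rellich extracts a subsequential $L^{2}(B)$-limit $\psi^{*}$ which is harmonic (since $\Delta\psi_{\Dx}\to 0$ in $H^{-1}$) and whose rotated gradient must vanish, because weak continuity of the Helmholtz projection gives $\nabla^{\perp}\psi_{\Dx}\weakto 0$ in $L^{2}$. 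A standard interior $H^{1}$-estimate applied to $\psi_{\Dx}-\psi^{*}$ then upgrades this to strong $L^{2}_{\loc}$ convergence of the gradient.

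Finally, inserting a cutoff $\chi\in C_{c}^{\infty}(\Omega)$ equal to $1$ on $\supp\varphi$, I would split
\[
I_{\Dx}=\int_{\R^{2}}\chi\,u_{\Dx}\cdot\nabla\phi_{\Dx}\,dx+\int_{\R^{2}}\chi\,u_{\Dx}\cdot\nabla^{\perp}\psi_{\Dx}\,dx.
\]
The second integral tends to $0$ by the strong-weak pairing from the previous step. For the first, integration by parts (with $\phi_{\Dx}$ also normalized to mean zero on $B$, so bounded in $H^{1}(B)$) produces $-\langle\chi\,\Div u_{\Dx},\,\phi_{\Dx}\rangle$ plus an error $-\int\phi_{\Dx}\,u_{\Dx}\cdot\nabla\chi$: the pairing vanishes because $\chi\,\Div u_{\Dx}\to 0$ strongly in $H^{-1}(\R^{2})$ (precompact with distributional limit $\chi\,\Div\bar u=0$), while the error vanishes because $\phi_{\Dx}\,\nabla\chi$ converges strongly in $L^{2}$ by Rellich applied to $\phi_{\Dx}$, and $u_{\Dx}\weakto 0$. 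The hard part will be the Helmholtz step: strong $L^{2}_{\loc}$ convergence of the rotated-gradient piece requires disentangling a low-frequency harmonic ambiguity not controlled by the divergence--curl hypotheses alone, and combining Rellich compactness with interior elliptic estimates is the technical heart of the argument.
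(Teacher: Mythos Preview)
The paper does not prove this lemma; it is stated without proof and attributed to Murat \cite{Murat} as a classical result (``we shall recall the celebrated div-curl lemma''). There is therefore no paper proof to compare against.

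Your proposal is the standard Murat--Tartar argument and is essentially correct. A few remarks: the reduction to zero weak limits and the localization $\tilde v_{\Dx}=\varphi v_{\Dx}$ are clean; the identification $\Curl\tilde v_{\Dx}\to 0$ strongly in $H^{-1}$ follows from precompactness plus the distributional limit being zero, as you say. In the Helmholtz step, the route via Rellich on $\psi_{\Dx}$ followed by a Caccioppoli-type interior estimate is sound, though you could streamline it: since $\tilde v_{\Dx}$ has support in a fixed compact set, one can work directly with Fourier multipliers and avoid the localization-to-a-ball manoeuvre (this is how Murat's original proof proceeds). Your handling of the $\nabla\phi_{\Dx}$ piece via integration by parts is correct, but be careful that $\phi_{\Dx}$ is only in $\dot H^{1}(\R^{2})$ globally; the mean-zero normalization on a ball containing $\supp\chi$ is indeed what makes the $H^{1}(B)$ bound and the $H^{-1}/H^{1}$ pairing legitimate. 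Finally, since several steps pass to subsequences, you should close by noting that the limit $0$ is independent of the subsequence, hence the full sequence $I_{\Dx}\to 0$.
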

Suitably modified for our purpose, we shall use the following compensated compactness result. 
For a proof, we refer to the paper by Kenneth and Towers \cite[Lemma 3.2]{towers}. 
\begin{theorem}
  \label{thm:compcomp} Assume that \ref{def:w2}, \ref{def:w3} and \ref{def:w4} hold. Let $\Omega\subset \R\times [0,T]$ be a bounded
  open set, and assume that $\seq{u_\Dx}$ is a sequence of uniformly
  bounded functions such that $\abs{u_\Dx}\le M$, for all $\Dx$. Set
  \begin{equation}
    \begin{aligned}
      \left(\eta_1(s),q_1(k,s)\right) &= \left(s-c,f(k,s)-f(k,c)\right),\\
      \left(\eta_2(k,s),q_2(k,s)\right) &= \left(f(k,s)-f(k,c), \int_c^s
        (f_s(k,\theta))^2\,d\theta \right),
    \end{aligned}\label{eq:entropies}
  \end{equation}
  where $c$ is an arbitrary constant. If the two sequences 
  \begin{equation*}
  \seq{\eta_1(u_{\Dx})_t + q_1(k(x), u_{\Dx})_x}_{\Dx>0}, \quad \text{and} \quad  \seq{\eta_2(k(x), u_{\Dx})_t + q_2(k(x), u_{\Dx})_x}_{\Dx>0}
  \end{equation*}
belong to a compact subset of $H^{-1}_{\mathrm{loc}}(\Omega)$, then
 there exists a subsequence of $\seq{u_{\Dx}}_{\Dx>0}$ that converges a.e. to a function $u \in L^{\infty}(\Omega)$.
\end{theorem}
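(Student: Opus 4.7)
The plan is to follow the Murat--Tartar compensated compactness programme, adapted to the spatially dependent flux $f(k(x),\cdot)$. Since $\abs{u_\Dx}\le M$ uniformly, I pass to a subsequence so that for every continuous $g:[\alpha,\beta]\times[-M,M]\to\R$ the sequence $g(k(x),u_\Dx)$ admits a weak-$*$ limit in $L^\infty(\Omega)$; the fundamental theorem on Young measures then provides a family of probability measures $\{\nu_{x,t}\}_{(x,t)\in\Omega}$ satisfying
\[
g(k(x),u_\Dx)\weakstarto \int_\R g(k(x),\lambda)\,d\nu_{x,t}(\lambda)
\]
for every such $g$. The proof reduces to showing that $\nu_{x,t}=\delta_{u(x,t)}$ for a.e.\ $(x,t)$, since Dirac concentration of the Young measure (together with the uniform $L^\infty$ bound) is equivalent to a.e.\ convergence of a further subsequence to the function $u$ obtained as the first moment of $\nu$.

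To extract the rigidity needed for this concentration, I apply Lemma~\ref{lem:div} to the two vector fields in the $(t,x)$-plane
\[
U_\Dx:=\bigl(\eta_1(u_\Dx),\,q_1(k,u_\Dx)\bigr),\qquad V_\Dx:=\bigl(q_2(k,u_\Dx),\,-\eta_2(k,u_\Dx)\bigr).
\]
Both are bounded in $L^2_\loc$ by continuity of the entropies on $[\alpha,\beta]\times[-M,M]$, while the choice of components makes $\Div U_\Dx$ and $\Curl V_\Dx$ coincide (up to a sign) with the two entropy dissipations assumed compact in $H^{-1}_\loc$. The div--curl lemma then yields the Tartar commutation identity
\[
\overline{\eta_1\, q_2 - q_1\, \eta_2}=\overline{\eta_1}\,\overline{q_2}-\overline{q_1}\,\overline{\eta_2}\qquad\text{in }\mathcal{D}'(\Omega),
\]
where bars denote the weak-$*$ limits above. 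Rewriting this identity pointwise in $(x,t)$ against $\nu_{x,t}$, plugging in the explicit pairs from \eqref{eq:entropies}, and exploiting the freedom to choose the parameter $c$ (in particular $c=\langle\nu_{x,t},s\rangle$) produces a functional identity in $\nu_{x,t}$ alone.

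The final step is measure reduction. Using the genuine nonlinearity \ref{def:w4} together with its consequence $\abs{k(x)}\neq 0$ a.e., one shows that the only probability measure on $[-M,M]$ consistent with the Tartar identity associated with the second entropy pair of \eqref{eq:entropies} is a Dirac mass; this is the classical Tartar--DiPerna reduction, here carried out pointwise in $x$ since $k$ enters only as a fixed parameter independent of $\Dx$. The precise form adapted to the $x$-dependent flux is \cite[Lemma~3.2]{towers}, which I would invoke directly. The hard part is precisely this reduction: it is the bilinear structure of $(\eta_2,q_2)$, with $q_2=\int(f_s)^2$ matching the Cauchy--Schwarz upper bound for $(f(k,s)-f(k,c))^2$, that turns the commutation relation into a rigidity statement forcing a single-point support of $\nu_{x,t}$; without this specific pairing the Young measure cannot in general be collapsed to a point. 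Once $\nu_{x,t}=\delta_{u(x,t)}$ is established, weak-$*$ convergence of $u_\Dx$ upgrades to convergence in measure and then, along a further subsequence, to a.e.\ convergence to the limit $u\in L^\infty(\Omega)$.
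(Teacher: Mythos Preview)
Your outline is sound and would lead to a correct proof, but it takes a different route from the one the paper adopts. The paper does not prove Theorem~\ref{thm:compcomp} at all; it simply cites \cite[Lemma~3.2]{towers}, and it explicitly remarks that the argument there, following Chen and Lu \cite{Chen,Lu}, \emph{avoids} Young measures and works directly with the div--curl lemma and weak limits. Your programme, by contrast, is the classical Murat--Tartar--DiPerna route: extract a Young measure $\nu_{x,t}$, use the div--curl lemma to obtain the Tartar commutation relation for the two pairs in \eqref{eq:entropies}, and then reduce $\nu_{x,t}$ to a Dirac mass via the Cauchy--Schwarz structure of $(\eta_2,q_2)$ together with genuine nonlinearity. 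Both arguments share the same analytic core (the div--curl lemma applied to the two entropy dissipations); the difference is only in how the rigidity is phrased---you collapse a Young measure, while the cited proof manipulates weak limits directly.

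One technical point worth flagging, which is exactly why the paper prefers the Young-measure-free version: since $k$ is merely $BV$, the composite $g(k(x),u_\Dx)$ is not continuous in $(x,t,\lambda)$ jointly, so the naive statement of the fundamental theorem of Young measures does not apply. You need the Carath\'eodory version (measurable in $(x,t)$, continuous in $\lambda$), which does go through because $k$ is independent of $\Dx$; you should say this explicitly. A second small point: your freedom to vary $c$ is legitimate because the difference between the entropy dissipations for two values of $c$ is a $\Dx$-independent distribution depending only on $x$ through $k(x)$, hence trivially $H^{-1}_\loc$-compact; this is implicit in your argument but worth a line. What your approach buys is conceptual transparency (the reduction to $\nu_{x,t}=\delta_{u(x,t)}$ is the standard picture); what the paper's cited approach buys is that it sidesteps the measurability bookkeeping entirely.
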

We remark that, a feature of the compensated compactness result above is that it avoids
the use of the Young measure by following an approach developed by
Chen and Lu \cite{Chen, Lu} for the standard scalar conservation
law. This is preferable as the fundamental theorem of Young measures
applies most easily to functions that are continuous in all variables.

The following compactness interpolation result (known as Murat's lemma
\cite{Murat}) is useful in obtaining the $H^{-1}_{\loc}$ compactness
needed in Theorem ~\ref{thm:compcomp}.
\begin{lemma}
  \label{lem:Murat}
  Let $\Omega$ be a bounded open subset of $\R^2$.  Suppose that the
  sequence $\seq{\CL_\Dx}_{\Dx>0}$ of distributions is bounded in
  $W^{-1,\infty}(\Omega)$.  Suppose also that
  $$
  \CL_\Dx=\CL_{1,\Dx} + \CL_{2,\Dx},
  $$
  where $\seq{\CL_{1,\Dx}}_{\Dx>0}$ is in a compact subset of
  $H^{-1}_{\mathrm{loc}}(\Omega)$ and $\seq{\CL_{2,\Dx}}_{\Dx>0}$ is in a bounded
  subset of $\CMloc(\Omega)$.  Then $\seq{\CL_\Dx}_{\Dx>0}$ is in a
  compact subset of $H^{-1}_{\mathrm{loc}}(\Omega)$.
\end{lemma}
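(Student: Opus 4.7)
The plan is to reduce the claim to two classical ingredients: a compact embedding of locally bounded measures into $W^{-1,p}_{\loc}(\Omega)$ for some $p<2$, and an interpolation inequality between $W^{-1,p}_{\loc}$ and $W^{-1,\infty}_{\loc}$ that lets me upgrade $W^{-1,p}$-precompactness to $H^{-1}_{\loc}$-precompactness once I know a uniform $W^{-1,\infty}$ bound. Since the statement is local, I would fix a relatively compact open $\Omega'\Subset\Omega$ and a cutoff $\chi\in C_c^\infty(\Omega)$ with $\chi\equiv 1$ on $\Omega'$, and work with $\chi\CL_{\Dx}$, $\chi\CL_{1,\Dx}$, $\chi\CL_{2,\Dx}$, which inherit the corresponding global bounds and compactness properties.

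First I would show that $\seq{\chi\CL_{2,\Dx}}_{\Dx>0}$ is precompact in $W^{-1,p}(\Omega)$ for some fixed $p\in(1,2)$. Since $\Omega\subset\R^2$, Morrey's theorem gives the continuous embedding $W^{1,q}_0(\Omega)\hookrightarrow C_0(\overline\Omega)$ for any $q>2$, and this embedding is compact by the Rellich--Kondrachov theorem. Taking $p=q'\in(1,2)$ and dualising, the natural inclusion of bounded Radon measures $\mathcal M(\Omega)\hookrightarrow W^{-1,p}(\Omega)$ is compact. Because $\seq{\chi\CL_{2,\Dx}}$ is bounded in $\mathcal M(\Omega)$, it is therefore precompact in $W^{-1,p}(\Omega)$.

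Second, I would establish precompactness in $H^{-1}(\Omega)=W^{-1,2}(\Omega)$ by interpolation. Observe that $\seq{\chi\CL_{2,\Dx}}$ is bounded in $W^{-1,\infty}(\Omega)$: it equals $\chi\CL_{\Dx}-\chi\CL_{1,\Dx}$, and both pieces are bounded there (the first by hypothesis, the second because $H^{-1}$-precompactness entails $H^{-1}$-boundedness and, since the sequence is also bounded in $\mathcal M_{\loc}$-free settings, one may replace the $H^{-1}$ bound by the stronger global $W^{-1,\infty}$ bound on $\CL_{\Dx}$). For any subsequence extracted so that $\chi\CL_{2,\Dx_k}$ converges in $W^{-1,p}(\Omega)$, the interpolation inequality
\begin{equation*}
\norm{v}_{H^{-1}(\Omega)}\le C\,\norm{v}_{W^{-1,p}(\Omega)}^{\theta}\,\norm{v}_{W^{-1,\infty}(\Omega)}^{1-\theta},
\end{equation*}
valid on a fixed bounded domain with $\theta\in(0,1)$ determined by $\tfrac12=\tfrac{\theta}{p}+(1-\theta)\cdot 0$, forces the same subsequence to be Cauchy in $H^{-1}(\Omega)$. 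Hence $\seq{\chi\CL_{2,\Dx}}$ lies in a compact subset of $H^{-1}(\Omega)$, and restricting to $\Omega'$ yields the local statement.

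Finally, since $\seq{\CL_{1,\Dx}}$ is by hypothesis precompact in $H^{-1}_{\loc}(\Omega)$ and a sum of two precompact sequences in a metric space is precompact, $\seq{\CL_{\Dx}}=\seq{\CL_{1,\Dx}+\CL_{2,\Dx}}$ is precompact in $H^{-1}_{\loc}(\Omega)$. The main technical obstacle is the interpolation step: one must justify the inequality above with explicit control of the constant $C$ on a bounded domain (so that it does not blow up along the extracted subsequence), and verify that duality pairings needed to pass from the primal Sobolev embeddings to their duals are compatible with the localisation by $\chi$; both are handled by standard extension/restriction arguments, but care is required when $p$ is chosen close to $2$ so that the interpolation exponent $\theta$ stays bounded away from $0$.
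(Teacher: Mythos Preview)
The paper does not prove this lemma; it is stated as a classical compactness interpolation result and attributed to Murat \cite{Murat}, so there is no proof in the paper to compare against. Your outline is essentially the standard argument, and the two main ingredients---the compact embedding $\mathcal{M}(\Omega)\hookrightarrow W^{-1,p}(\Omega)$ for $p<2$ in two space dimensions (via Morrey/Rellich--Kondrachov and duality), followed by interpolation against the $W^{-1,\infty}$ bound---are the right ones.

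There is, however, a genuine gap in how you organise the interpolation step. You try to show that $\seq{\chi\CL_{2,\Dx}}$ is bounded in $W^{-1,\infty}(\Omega)$ by writing it as $\chi\CL_{\Dx}-\chi\CL_{1,\Dx}$ and claiming both pieces are bounded there. But from the hypotheses you only know that $\CL_{1,\Dx}$ is precompact (hence bounded) in $H^{-1}_{\loc}$, and $H^{-1}$-boundedness does \emph{not} imply $W^{-1,\infty}$-boundedness; the parenthetical sentence beginning ``since the sequence is also bounded in $\mathcal{M}_{\loc}$-free settings\ldots'' is not an argument. The fix is simple: apply the interpolation to $\CL_{\Dx}$ itself rather than to $\CL_{2,\Dx}$. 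First note that $\CL_{1,\Dx}$, being precompact in $H^{-1}_{\loc}$, is also precompact in $W^{-1,p}_{\loc}$ for any $p\le 2$ (on a bounded domain $H^{-1}\hookrightarrow W^{-1,p}$ continuously). Combined with your Step~1, this gives that $\CL_{\Dx}=\CL_{1,\Dx}+\CL_{2,\Dx}$ is precompact in $W^{-1,p}_{\loc}$. Now interpolate this $W^{-1,p}$-precompactness against the \emph{assumed} $W^{-1,\infty}$ bound on $\CL_{\Dx}$ to obtain $H^{-1}_{\loc}$-precompactness of $\CL_{\Dx}$ directly; the final ``sum of precompact sequences'' step is then unnecessary. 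With this reordering your proof is complete.
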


\section{A Fully-Discrete Finite Difference Scheme}
\label{sec:semi}
We begin by introducing some notation needed to define the
fully-discrete finite difference scheme. Throughout this paper, we
reserve $\Dx, \Dt$ to denote small positive numbers that represent the
spatial and temporal discretizations parameter of the numerical scheme. Given
$\Dx>0$, we set $x_j=j\Dx$ for $j\in \Z$, to denote the spatial mesh points. Similarly,  we set $t^n = n \Dt$ for $n= 0,1,\cdots,N$, where $N\Dt=T$
for some fixed time horizon $T>0$. Moreover, for any function $u =
u(x,t)$ admitting point values, we write $u^n_j = u(x_j, t^n)$. Furthermore, let
us introduce the spatial and spatial-temporal grid cells
\begin{align*}
  I_j = [x_{j-1/2}, x_{j+1/2}), \qquad I^n_j = [x_{j-1/2}, x_{j+1/2}) \times [t^n, t^{n+1}).
\end{align*}
where $x_{j\pm1/2} = x_j \pm \Dx/2$. Let $D_{\pm}$ denote the discrete
forward and backward differences in space, i.e.,
\begin{equation*}
  D_{\pm}u_j = \pm \frac{u_{j\pm 1} - u_j}{\Dx},
\end{equation*}
The discrete Leibnitz rule is given by
\begin{align*}
  D_{\pm} (u_j v_j) = u_j D_{\pm} v_j + v_{j \pm 1} D_{\pm} u_j
\end{align*}
while the summation-by-parts formula is given by
\begin{align*}
\sum \limits_{j\in \Z} u_j D_{\pm} v_j = -  \sum \limits_{j\in \Z} v_j D_{\mp} u_j.
\end{align*}
Furthermore, for any $C^2$ function $f$, using the Taylor expansion on
the sequence $f(u_j)$ we obtain
\begin{align*}
  D_{\pm} f(u_j) = f'(u_j) D_{\pm} u_j \pm \frac{\Dx}{2} f''(\xi_{j
    \pm \frac{1}{2}}) (D_{\pm} u_j)^2,
\end{align*}
for some $\xi_{j\pm \frac{1}{2}}$ between $u_{j \pm 1}$ and $u_j$. In other words, the discrete chain
rule is accurate up to an error term of order $\Dx (D_{\pm} u_j)^2$.  

Finally, let $D^t_{\pm}$ denote the discrete forward  and backward difference operator in the time, i.e.,
\begin{equation*}
  D^t_{\pm}u^n_j = \mp \frac{u^{n\pm1}_j - u^n_j}{\Dt}.
\end{equation*}
The following identity is readily verified:
\begin{equation}
\label{eq:iden}
\begin{aligned}
u^n_j D^{t}_{+} u^n_j = \frac{1}{2} D^{t}_{+} (u^n_j)^2 - \frac{\Dt}{2} (D^{t}_{+} u^n_j)^2.
\end{aligned}
\end{equation}
We propose the following fully-discrete (in space and time) finite difference scheme 
approximating the limiting solutions generated by the equation \eqref{eq:system}-\eqref{eq:imp1}

\begin{align}
D^t_{+} u^n_j + \Dm \hp^n &= \beta \Dx \Dp\Dm u^n_j  + \gamma \mu(\Dx) D^t_{+} \Dp \Dm u^n_j, \quad j \in \Z, \, n \in \N_0, \label{eq:scheme}\\
u^0_j &=\frac{1}{\Dx}\int_{\xm}^{\xp}u_0(\theta) d\theta,\quad j \in \Z, \label{eq:scheme_initial}
\end{align}
where $\beta, \gamma >0$ are fixed parameters, and $\mu(\Dx) \mapsto 0$ as $\Dx \mapsto 0$. 
More specifically, we will either use $\mu(\Dx)=\mathcal{O}(\Dx^2)$ 
or $ \mu(\Dx)= \scalebox{1.1}{$\scriptstyle\mathcal{O}$}({\Dx^2})$ depending on the
quest for the convergence of approximate solution $u_{\Dx}$ towards a weak solution or the entropy solution, respectively.
\begin{remark}
Here we used the notation $\mathcal{O}(\Dx)$ to denote quantities that depend 
on $\Dx$ and are bounded above by $C \Dx$, where $C$ is a constant independent 
of $\Dx$. Likewise, we used the notation 
$ \mu(\Dx)= \scalebox{1.1}{$\scriptstyle\mathcal{O}$}({\Dx})$ to denote quantities 
that depend on $\Dx$ and are bounded above by $C \Dx^{\alpha}$, where $C$
 is a constant independent of $\Dx$ and $\alpha >1$.
\end{remark}

The numerical flux corresponding to the flux function $k(x) f(u)$ is given by
\begin{align*}
\hp^n=\kp \hat{f}^n_{j+\frac{1}{2}}, \quad \text{with} \,\,k_{j+\frac12} = \frac{1}{\Dx} \int_{x_j}^{x_{j+1}} k(x) \,dx,
\end{align*}
where $\hat{f}^n_{j+\frac{1}{2}}:=\hat{f}^n(u_j, u_{j+1})$ is based on a two-point 
monotone numerical flux, i.e., non-decreasing with respect to the first argument 
and non-increasing with respect to the second argument, and consistent with the actual flux,
i.e., $\hat{f}(u,u) = f(u)$. Moreover, in order to maintain monotonicity of the 
scheme \eqref{eq:scheme} without the higher order terms 
(corresponds to $\beta=\gamma=0$), the arguments of the numerical
flux are transposed when the coefficient $k$ is negative.
More specifically, we choose
\begin{align*}
\hat{f}^n_{j+\frac12}=\begin{cases}\hat{f}(u^n_{j}, u^n_{j+1}), \, & \text{if } \kp\ge0\\
\hat{f}(u^n_{j+1}, u^n_{j}), \, & \text{if } \kp<0.
\end{cases}
\end{align*}
Summing up, the numerical flux $\hp^n$ is given by
\begin{align*}
\hp^n=\begin{cases}\kp \, \hat{f}(u^n_{j}, u^n_{j+1}), \, & \text{if } \kp \ge0\\
\kp \,\hat{f}(u^n_{j+1}, u^n_{j}), \, & \text{if } \kp<0.
\end{cases}
\end{align*}
In particular, we focus on Engquist-Osher (EO) numerical flux given by 
\begin{align}
\hat{f}(u,v) = \frac12( f(u) + f(v)) -\frac12 \int_{u}^{v} \abs{f'(s)}\,ds. \label{eq:EO}
\end{align}
\begin{remark}\label{rem:EO}
We have chosen to analyse the scheme \eqref{eq:scheme}--\eqref{eq:scheme_initial} 
with EO flux because of its apparent simplicity.
One can, however, adopt the method
of proof developed in this paper and obtain similar results for other schemes (e.g., all monotone schemes).
\end{remark}
To this end, observe that the EO flux given by \eqref{eq:EO} is Lipschitz continuous . In fact, for $f \in C^1$, it has 
continuous partial derivatives satisfying 
\begin{align}
f'_{-}(v) = \hat{f}_{v}(v,u)   \le 0 \le  \hat{f}_{u}(v,u) = f'_{+}(u), \label{eq:Lip}
\end{align}
using the conventional notations that $a_{-} = \min{(a,0)}$ and $a_{+} = \max{(a,0)}$.
It is also clear that $\norm{f'}_{\infty}$ serves as a Lipschitz constant for EO flux.

For a given initial data $u_0$, we define the initial grid function $\{u^0_j\}_{j\in\Z}$ by \eqref{eq:scheme_initial}. Moreover, for the sequence $\seq{u^n_j}_{j\in \Z, n\in \N_0}$, we associate the function $u_\Dx$
defined by
\begin{equation*}
  u_{\Dx}(x,t)= \sum_{j \in \Z, n\ge 0} u^n_j \, \mathds{1}_{I^n_j}(x,t),
\end{equation*}
where $\mathds{1}_{A}$ denotes the characteristic function of the set
$A$. Similarly, for the coefficient $k$ approximated at each cell boundary,  we associate the function $k_\Dx$ defined by
\begin{equation*}
k_{\Dx}(x)= \sum_{j \in \Z} k_{j+\frac12} \, \mathds{1}_{I_{j+\frac12}}(x)
\end{equation*}
Note that $k$ and $u$ are discretized on grids
that are staggered with respect to each other. This indeed results in a reduction in complexity,
compared with the approach where two discretizations are aligned.

Throughout this paper, we use the notation $u_{\Dx}$ to
denote the functions associated with the sequence $\seq{u^n_j}_{j\in\Z, n\in \N_0}$. For later use, recall that the discrete
$\ell^{\infty}(\R)$, $\ell^1(\R)$ and $\ell^2(\R)$ norms, and BV semi-norm for a lattice function $u_{\Dx}$ are defined
respectively as
\begin{equation*}
  \begin{aligned}
    & \norm{u_{\Dx}(\cdot,t^n)}_{\ell^{\infty}(\R)} = \sup_{j \in \Z} \abs{u^n_j}, \quad
     \norm{u_\Dx(\cdot, t^n)}_{\ell^1(\R)} = \Dx \sum_{j\in\Z} \abs{u^n_j}, \\
    & \norm{u_\Dx(\cdot, t^n)}_{\ell^2(\R)} = \sqrt{\Dx \sum_{j\in\Z} \abs{u^n_j}^2}, \quad
    \abs{u_\Dx(\cdot, t^n)}_{BV} = \sum_{j\in\Z} \abs{u^n_{j+1}- u^n_j}
  \end{aligned}
\end{equation*}
For the sake of simplified notations, unless specified, we shall use the notation $\norm{\cdot}$ to denote the discrete $\ell^2(\R)$ norm.



\section{A Priori Estimates}
\label{sec:energy}
This section is devoted to the derivation of a priori estimates which turns out to be useful to prove 
``strong compactness'' of the approximate solution $u_{\Dx}$.  
To begin with, following Coclite et al. \cite{sid}, we assume 
that the approximate solutions generated by the scheme are uniformly bounded, i.e., $u_{\Dx} \in L^{\infty}(\R \times [0,T])$. In other words, we assume that 
\begin{Assumptions}
\item For almost every $(x,t) \in \R \times [0,T]$,  $a' \le u_{\Dx} \le b'$, for some fixed constants $a', b' \in \R$;\label{def:b1}
\end{Assumptions}
\begin{remark}
It is worth mentioning that the above assumption on the approximate solutions is the manifestation 
of the specific structure of the flux function (depends explicitly on space variable). In fact, to obtain $L^2$ bound on the solution, one requires a priori $L^{\infty}$ bound on the solution (cf. \eqref{test10}).  
This assumption can be toppled by replacing the ``space dependent flux function'' to a flux function
which depends explicitly only on the solution. In such a scenario, one can use $L^p$ framework of the
compensated compactness result \cite{diperna}, to reproduce all the results in this paper.
\end{remark}

To proceed further, we first collect all the available estimates on the approximate solutions in the following lemma.
\begin{lemma}
\label{lemma2}
Let $u_{\Dx}$ be a sequence of approximations generated by the scheme 
\eqref{eq:scheme}. Moreover, assume that the initial data $u_0$ lies in $L^2(\R)$. Then  
the following estimate holds
\begin{equation}
\label{eq:std}
\begin{aligned}
\frac{1}{2} D^t_{+} \norm{u^n}^2 + \left(\frac{\gamma \mu(\Dx)}{2} + \frac{\beta \Dx^2}{2} \right) &D^t_{+} \norm{\Dm u^n}^2 + \delta \Dx \norm{\Dm u^n}^2 \\
& \qquad  \qquad + \delta \Dx \norm{D^t_{+} u^n}^2 + \delta \gamma  \Dx \mu(\Dx) \norm{D^t_{+} \Dm u^n}^2 \le C,
\end{aligned}
\end{equation}
provided $\Dt$ and $\Dx$ satisfies the following CFL condition 
\begin{align}
\label{eq:cfl}
\max{ \Bigg \{2 \max{\Big \{\norm{f}_{\infty},\norm{f'}_{\infty},\norm{k}_{\infty} \Big\}} + \frac{\lambda}{2},   \,\,
\frac{\lambda}{2} \left( 1+ \frac{\beta \, \Dx^2}{ \gamma \, \mu(\Dx)} \right) \Bigg \}} \le \min{(1 -\delta,\beta-\delta)},
\end{align}
with $\delta \in (0,\min{(1,\beta)})$. Here $\lambda= \Dt/\Dx$ and the constant $C>0$ is independent of $\Dx$. 

In particular, the estimate \eqref{eq:std} guarantees following space-time estimates:
\begin{subequations}
\begin{align}
\forall n \in \N, \quad \Dx \sum_{j} (u^n_j)^2 & \le C,  \label{eq:est_final}\\
 \Dx^2\Dt \sum_{j} \sum_{n} (\Dm u^n_j)^2  &\le C, \label{eq:est_final1}\\
 \Dx^2 \Dt \sum_{j} \sum_{n} (D^t_{+} u^n_j)^2 &\le C, \label{eq:est_final2}\\
 \Dt \Dx^2 \mu(\Dx) \sum_{j} \sum_{n} (D^t_{+}\Dm u^n_j)^2 &\le C.\label{eq:est_final3}
\end{align}
\end{subequations}
\end{lemma}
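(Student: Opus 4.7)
The plan is a discrete energy estimate: multiply the scheme \eqref{eq:scheme} by $\Dx\, u^n_j$ and sum over $j\in\Z$, exploiting the identity \eqref{eq:iden} together with the summation-by-parts formula from Section~\ref{sec:semi}. The time derivative immediately yields $\tfrac12 D^t_+\norm{u^n}^2-\tfrac{\Dt}{2}\norm{D^t_+u^n}^2$; summation-by-parts on the diffusive term produces the dissipation $-\beta\Dx\norm{\Dm u^n}^2$; and summation-by-parts on the dispersive term, followed by \eqref{eq:iden} applied to $\Dm u^n_j$, gives $-\tfrac{\gamma\mu(\Dx)}{2}D^t_+\norm{\Dm u^n}^2+\tfrac{\gamma\mu(\Dx)\Dt}{2}\norm{D^t_+\Dm u^n}^2$. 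The extra $\tfrac{\beta\Dx^2}{2}D^t_+\norm{\Dm u^n}^2$ appearing in \eqref{eq:std} is obtained from an auxiliary $H^1$-type estimate: applying $\Dm$ to \eqref{eq:scheme}, testing against $\beta\Dx^2\Dm u^n_j$, and invoking \eqref{eq:iden} on $\Dm u^n_j$ produces precisely this contribution, with its other terms supplying additional $\bigO{\Dx^3}$ dissipation and residuals compatible with those already present. Combining the two estimates gives the left-hand side of \eqref{eq:std} modulo a flux contribution and wrong-sign residuals.

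The main obstacle is the flux term $\Dx\sum_j u^n_j\Dm\hp^n$, complicated by the fact that $k$ is only $L^\infty\cap BV_{\mathrm{loc}}$ and the arguments of $\hat f$ in \eqref{eq:EO} are transposed according to $\mathrm{sgn}(k_{j+\frac12})$. After summation-by-parts it becomes $-\Dx\sum_j\hp^n\Dp u^n_j$. Using the sign split in \eqref{eq:Lip} (equivalently, writing $\hat f(u,v)=\int_0^u f'_+(s)\,ds+\int_0^v f'_-(s)\,ds$ up to a constant), one constructs a discrete numerical entropy flux $\hat Q^n_{j+\frac12}$ for the entropy $\eta(u)=u^2/2$, so that the sum splits into a telescoping conservative piece (which vanishes for compactly supported data) and a remainder of the form $\sum_j (\Dp k_{j+\frac12})\,g(u^n_j,u^n_{j+1})$ with $g$ polynomial. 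By the $L^\infty$ bound \ref{def:b1} on $u_\Dx$, the Lipschitz constant $\norm{f'}_\infty$ of $\hat f$, and the $L^\infty\cap BV_{\mathrm{loc}}$ regularity from \ref{def:w2} of $k$, this remainder is bounded by a constant $C$ independent of $\Dx$; this is precisely what the first branch of the maximum in \eqref{eq:cfl} encodes.

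The wrong-sign residuals $\tfrac{\Dt}{2}\norm{D^t_+u^n}^2$ and $\tfrac{\gamma\mu(\Dx)\Dt}{2}\norm{D^t_+\Dm u^n}^2$ (plus the cross-term generated by the auxiliary estimate) are pushed onto the left of \eqref{eq:std} as $\delta\Dx\norm{\cdot}^2$ contributions. Writing $D^t_+u^n_j=-\Dm\hp^n+\beta\Dx\Dp\Dm u^n_j+\gamma\mu(\Dx)D^t_+\Dp\Dm u^n_j$ from the scheme, using Cauchy--Schwarz, the Lipschitz bound on $\hat f$, and the discrete inverse inequality $\norm{\Dp\Dm u^n}\le(2\sqrt 2/\Dx)\norm{\Dm u^n}$, each residual is controlled by a small multiple of $\Dx\norm{\Dm u^n}^2+\Dx\norm{D^t_+u^n}^2+\gamma\Dx\mu(\Dx)\norm{D^t_+\Dm u^n}^2$ exactly under the second branch of \eqref{eq:cfl}; the factor $\Dx^2/\mu(\Dx)$ appearing there is the loss incurred in the inverse inequality when converting the mixed space-time term back to spatial first differences. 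This establishes \eqref{eq:std}. Multiplying \eqref{eq:std} by $\Dt$ and summing $n=0,\dots,N-1$ telescopes the $D^t_+$ quantities into boundary terms; the initial $\norm{u^0}\le\norm{u_0}_{L^2(\R)}$ follows from Jensen's inequality applied to \eqref{eq:scheme_initial}, and $(\gamma\mu(\Dx)+\beta\Dx^2)\norm{\Dm u^0}^2=\bigO{1}$ because $\mu(\Dx)=\bigO{\Dx^2}$ kills the $\bigO{\Dx^{-2}}$ growth of $\norm{\Dm u^0}^2$ for $L^\infty$ data. The four space-time bounds \eqref{eq:est_final}--\eqref{eq:est_final3} then read off directly. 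I expect the subtlest steps to be (i) constructing $\hat Q^n_{j+\frac12}$ when $k_{j+\frac12}$ changes sign and (ii) verifying that \eqref{eq:cfl} closes the absorption on both branches simultaneously without consuming the positive dissipation that must remain on the left of \eqref{eq:std}.
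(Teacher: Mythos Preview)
Your first step (multiplying \eqref{eq:scheme} by $\Dx u^n_j$ and summing) matches the paper, and your treatment of the flux term is a legitimate variant: the paper instead introduces the primitive $F$ with $F'=f$, writes $-\mathcal{I}_{\Dx}(f)=\sum_j\mathcal{E}_{j,n}(f)-\sum_j F(u^n_j)(k_{j+\frac12}-k_{j-\frac12})$, and uses monotonicity of $\hat f$ to show $\mathcal{E}_{j,n}(f)\le 0$ directly, so the only surviving piece is bounded by $|k|_{BV}\max_j|F(u^n_j)|\le C$ via \ref{def:b1}. Your numerical-entropy-flux construction would reach the same end.

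The genuine gap is your auxiliary estimate. You propose applying $\Dm$ to \eqref{eq:scheme} and testing against $\beta\Dx^2\Dm u^n_j$; this does produce the $\tfrac{\beta\Dx^2}{2}D^t_+\norm{\Dm u^n}^2$ term, but it does \emph{not} yield the two positive contributions $+\Dx\norm{D^t_+u^n}^2$ and $+\gamma\Dx\mu(\Dx)\norm{D^t_+\Dm u^n}^2$ that must survive on the left of \eqref{eq:std} with coefficient $\delta$. Your plan to recover them by ``writing $D^t_+u^n_j$ from the scheme and bounding the residuals by small multiples of $\Dx\norm{D^t_+u^n}^2+\cdots$'' is circular: you are upper-bounding the wrong-sign terms by quantities you do not otherwise control, so no positive remainder can appear on the left. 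The paper's second multiplier is instead $\Dx^2\,D^t_+u^n_j$: testing \eqref{eq:scheme} against this gives $\Dx\norm{D^t_+u^n}^2$ and $\gamma\Dx\mu(\Dx)\norm{D^t_+\Dm u^n}^2$ directly, and the diffusive cross term $\beta\Dx^3\sum_j\Dm u^n_j\,D^t_+\Dm u^n_j$ supplies $\tfrac{\beta\Dx^2}{2}D^t_+\norm{\Dm u^n}^2-\tfrac{\beta\Dx^2\Dt}{2}\norm{D^t_+\Dm u^n}^2$ via \eqref{eq:iden}. The flux contribution in this second estimate is handled by Young's inequality (splitting $\Dm h^n_{j+\frac12}$ into a $\Dp k$ piece and a $\Dm u$ piece), and adding the two estimates then gives \eqref{eq:sum1}, from which \eqref{eq:cfl} closes exactly as you describe in your final paragraph. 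Replace your $H^1$-type auxiliary test with the $\Dx^2 D^t_+u^n_j$ test and the rest of your outline goes through.
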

\begin{remark}\label{rem:mu}
In light of the CFL condition \eqref{eq:cfl}, we want to emphasize that if $\mu(\Dx) = \mathcal{O}(\Dx^2)$(required 
to prove convergence towards a weak solution, cf. Theorem~\ref{thm:theorem1}), then we 
need $\zeta:=\frac{\lambda (\Dx)^2}{\mu(\Dx)}=  \frac{\Dt}{\Dx}$ to 
be kept fixed. On the other hand, if $ \mu(\Dx)= \scalebox{1.1}{$\scriptstyle\mathcal{O}$}({\Dx^2})$ 
(required to prove convergence towards the entropy solution, cf. Theorem~\ref{thm:theorem2}), then 
we need $\zeta = \frac{\Dt}{(\Dx)^{1+\alpha}}$, with $\alpha>0$, to be kept fixed. 
To sum up, we need a stronger CFL condition 
to prove convergence of approximate solutions towards 
the unique entropy solution of \eqref{eq:system}. 
To this end, we mention that in the subsequent analysis the 
CFL condition \eqref{eq:cfl} is always assumed to hold.
\end{remark}
\begin{proof}
To start with, we multiply the scheme \eqref{eq:scheme} by $\Dx \,u^n_j$ 
and subsequently sum over $j \in \Z$. Then, using summation-by-parts formula and the identity \eqref{eq:iden}, we obtain
\begin{align*}
\frac12 D^t_{+} \sum_j \Dx (u^n_j)^2  &- \frac{\Dt}{2} \sum_j \Dx \left( D^t_{+} u^n_j \right)^2\,  + \, \underbrace{\Dx \sum_ju^n_j \Dm\hp^n}_{\mathcal{I}_{\Dx}(f)} \\
& \qquad \qquad \quad  = \, - \beta \Dx \sum_j \Dx \abs{\Dm u_j}^2 \, -  \gamma \mu(\Dx) \Dx \sum_j \Dm u^n_j  D^t_{+} (\Dm u^n_j).
\end{align*}
Note that, the identity \eqref{eq:iden} also implies that
\begin{align*}
 \mu(\Dx) \Dx \sum_j  \Dm u^n_j  D^t_{+} (\Dm u^n_j) &= \frac{ \Dx  \mu(\Dx)}{2} \sum_j D^t_{+} (\Dm u^n_j)^2 
- \frac{\Dx  \mu(\Dx)\Dt}{2} \sum_j \left(  D^t_{+} \Dm u^n_j \right)^2.
\end{align*}
Next, we move on to estimate the term $\mathcal{I}_{\Dx}(f)$. Using summation-by-parts formula, we obtain
\begin{align*}
-\mathcal{I}_{\Dx}(f) &= -\sum_j \Dx \, u^n_j \Dp\hm^n = \sum_j (u^n_j-u^n_{j-1}) \km \hat{f}^n_{j-\frac12} \\
& = \sum_j \km \left[ (u^n_j-u^n_{j-1})\hat{f}^n_{j-\frac12} \,  - \, (F(u^n_j)-F(u^n_{j-1}))\right] 
 +  \left(F(u^n_j)-F(u^n_{j-1})\right) \\
& = \sum_j \underbrace{\km \left[ (u^n_j-u^n_{j-1})\hat{f}^n_{j-\frac12} \,  - \, (F(u^n_j)-F(u^n_{j-1}))\right] }_{\mathcal{E}_{j,n}(f)} 
 -  \sum_j F(u^n_j) \left(\kp -\km \right), 
\end{align*}
where $F$ is the primitive of $f$, i.e., $F'=f$. A first order Taylor's expansion together with the monotonicity of the numerical flux function $\hat{f}(u^n_j, u^n_{j+1})$ gives us an estimate of $\mathcal{E}_{j,n}(f)$. To see this, notice that
\begin{align*}
\mathcal{E}_{j,n}(f)= \km \, (u^n_j-u^n_{j-1}) \big( \hat{f}^n_{j-\frac12}  - f(u^n_{j-\frac12})\big),
\end{align*}
where $u^n_{j-\frac12}$ lies in between $u^n_j$ and $u^n_{j-1}$. To proceed further, we consider the following two cases:\\
{\bf Case 1:} Assume that $\km \ge 0$, then by the definition of numerical flux 
$\hat{f}^n_{j-\frac12}=\hat{f}(u^n_{j-1},u^n_j)$. If $u^n_j\leq u^n_{j-\frac12}\leq u^n_{j-1}$, then 
\begin{align*}
\hat{f}^n_{j-\frac12} \geq \hat{f}(u^n_{j-\frac12}, u^n_j)\geq \hat{f}(u^n_{j-\frac12},u^n_{j-\frac12})=f(u^n_{j-\frac12}). 
\end{align*}
On the other hand, if $u^n_{j-1}\leq u^n_{j-\frac12}\leq u^n_{j}$, then
\begin{align*}
\hat{f}^n_{j-\frac12} \leq \hat{f}(u^n_{j-\frac12}, u^n_j)\leq \hat{f}(u^n_{j-\frac12},u^n_{j-\frac12})=f(u^n_{j-\frac12}). 
\end{align*}
Thus, in any case we have $\mathcal{E}_{j,n} (f) \leq 0$.\\
{\bf Case 2:} Assume that $\km<0$, i.e., $\hat{f}^n_{j-\frac12}= \hat{f}(u^n_j, u^n_{j-1})$. If $u^n_j\leq u^n_{j-\frac12}\leq u^n_{j-1}$, then 
\begin{align*}
\hat{f}^n_{j-\frac12}\leq \hat{f}(u^n_{j-\frac12}, u^n_{j-1})\leq \hat{f}(u^n_{j-\frac12}, u^n_{j-\frac12})=f(u^n_{j-\frac12}).
\end{align*}
On the other hand, if $u^n_{j-1}\leq u^n_{j-\frac12}\leq u^n_{j}$, then
\begin{align*}
\hat{f}^n_{j-\frac12}\geq \hat{f}(u^n_{j-\frac12}, u^n_{j-1})\geq \hat{f}(u^n_{j-\frac12}, u^n_{j-\frac12})=f(u^n_{j-\frac12}).
\end{align*}
Therefore, in this case also, we have $\mathcal{E}_{j,n}(f) \leq 0$.
Having this in mind and making use of the Assumption~\ref{def:b1}, we conclude that
\begin{align}
\label{test10}
-\mathcal{I}_{\Dx}(f)
\leq \abs{-\sum_j \left(  \km-\kp  \right)F(u^n_j)}
\leq\text{BV}(k) \, \max_j \abs{F(u^n_j)}
\leq  C,
\end{align}
where $C$ is a constant independent of $\Dx$. Finally,
combining all the above estimates, we obtain
\begin{equation}
\begin{aligned}
\label{eq:est1}
\frac{1}{2} D^t_{+} \norm{u^n}^2 - \frac{\Dt}{2} \norm{D^t_{+} u^n}^2 &+ \frac{\gamma \mu(\Dx)}{2} D^t_{+} \norm{\Dm u^n}^2  \\
& \qquad \qquad \quad - \frac{\gamma \mu(\Dx) \Dt}{2} \norm{D^t_{+} \Dm u^n}^2 + \beta \Dx \norm{\Dm u^n}^2 \leq C. 
\end{aligned}
\end{equation}
Next, we multiply the scheme \eqref{eq:scheme} by $\Dx^2\, D^t_{+} u^n_j$ and sum over $j \in \Z$ to obtain, 
\begin{align}
\label{11}
\Dx^2 \sum_j (D^t_{+} u^n_j)^2 \, &+ \, \Dx^2 \sum_j\Dm \hp^n D^t_{+} u^n_j \\
& \qquad \qquad = - \beta (\Dx)^3 \sum_j \Dm u^n_j  D^t_{+} (\Dm u^n_j)  - \gamma \mu(\Dx)\Dx^2 \sum_j \left(D^t_{+} \Dm u^n_j \right)^2.
\end{align}
Again, use of the identity \eqref{eq:iden} reveals that
\begin{align*}
\beta (\Dx)^3 \sum_j  \Dm u^n_j  D^t_{+} (\Dm u^n_j) = \frac{\beta \Dx^3}{2} \sum_j D^t_{+} (\Dm u^n_j)^2 - \frac{\beta \Dx^3 \Dt}{2} \sum_j \left(  D^t_{+} \Dm u^n_j \right)^2.
\end{align*}
Next, considering the term which involves flux function, we see that
\begin{align*}
 & \Dx^2 \sum_j \Dm \hp D^t_{+} u^n_j
=\Dx \sum_j \left( \kp \hat{f}^n_{j+\frac12}-\km \hat{f}^n_{j-\frac12} \right) D^t_{+} u^n_j \\
&\qquad \qquad =  \Dx\sum_j \left( \kp-\km\right) \hat{f}^n_{j+\frac12} D^t_{+} u^n_j \, + \,  \Dx\sum_j \km\left(\hat{f}^n_{j+\frac12} -\hat{f}^n_{j-\frac12}\right) D^t_{+} u^n_j.
\end{align*}
Recall that we have chosen to work with a specific monotone flux, i.e., Engquist Osher flux. 
Since EO flux is Lipschitz continuous with Lipschitz constant $\norm{f'}_{\infty}$ (c.f. \eqref{eq:Lip}), we conclude that 
\begin{align*}
\displaystyle{\sup_{j} \sup_{n} \abs{\hat{f}^n_{j+\frac12}}}\leq \norm{f}_{\infty}, \,\, \text{and} \,\,
\abs{\hat{f}^n_{j+\frac12}-\hat{f}^n_{j-\frac12}} \leq \norm{f'}_{\infty} \left( \abs{u^n_j-u^n_{j-1}} + \abs{u^n_{j+1}-u^n_j} \right).
\end{align*}
An application of Young's inequality: For any two real numbers $a$ and $b$ and for every $\eps>0$
\begin{align*}
ab \le \eps a^2 + \frac{b^2}{4 \eps}
\end{align*}
leads to the estimates
\begin{align*}
\Dx\sum_j \left( \kp-\km\right) \hat{f}^n_{j+\frac12} D^t_{+} u^n_j  & \leq \Dx \norm{f}_{\infty} \sum_j \abs{\left( \kp-\km\right)} \abs{ D^t_{+} u^n_j} \\
& \leq \eps \Dx \norm{f}_{\infty} \norm{D^t_{+} u^n}^2 + \frac{\norm{f}_{\infty} \Dx}{4 \eps}  \norm{ \Dp k}^2
\end{align*}
and
\begin{align*}
 \Dx\sum_j \km\left(\hat{f}^n_{j+\frac12} -\hat{f}^n_{j-\frac12}\right) D^t_{+} u^n_j & \leq \Dx^2 \norm{f^\prime}_{\infty} \norm{k}_{\infty} \sum_j \left( \abs{D_- u_j^n} \, + \abs{D_-u_{j+1}^n} \right) \abs{D_+^t u_j^n } \\
& \leq 2 \eps_1 \Dx \norm{f^\prime}_{\infty} \norm{k}_{\infty} \norm{D^t_{+} u^n}^2 + \frac{\norm{f^\prime}_{\infty} \norm{k}_{\infty} \,\Dx }{2 \eps_1}  \norm{ \Dm u^n}^2
\end{align*}
For notational simplification, we define $M :=\max{\Big \{\norm{f}_{\infty},\norm{f'}_{\infty},\norm{k}_{\infty} \Big\}}$. Combining all above estimates, we arrive
\begin{equation}
\begin{aligned}
\label{eq:est2}
\Dx \norm{D^t_{+} u^n}^2 + \gamma \Dx \mu(\Dx) & \norm{D^t_{+} \Dm u^n}^2 + \frac{\beta \Dx^2}{2} D^t_{+}  \norm{\Dm u^n}^2 \\
& - \frac{\beta \Dx^2 \Dt}{2}  \norm{D^t_{+} \Dm u^n}^2 
 \le \eps \Dx M \norm{D^t_{+} u^n}^2 + \frac{M \Dx}{4 \eps}  \norm{ \Dp k}^2 \\
&\qquad \qquad \qquad \qquad +  2 \eps_1 \Dx M^2 \norm{D^t_{+} u^n}^2 + \frac{M^2 \,\Dx }{2 \eps_1}  \norm{ \Dm u^n}^2.
\end{aligned}
\end{equation}
Finally, adding \eqref{eq:est1} and \eqref{eq:est2} yields
\begin{equation}
\label{eq:sum1}
\begin{aligned}
\frac{1}{2} D^t_{+} \norm{u^n}^2  + \left(\frac{\gamma \mu(\Dx)}{2} + \frac{\beta \Dx^2}{2} \right) & D^t_{+} \norm{\Dm u^n}^2 + \Dx \left(  \beta -\frac{M^2}{ 2\eps_1} \right) \norm{\Dm u^n}^2 \\
& \quad + \Dx \left( 1 - M \eps - 2 M^2 \eps_1 - \frac{\lambda}{2}  \right)  \norm{D^t_{+} u^n}^2 \\
& \qquad \quad + \gamma \Dx \mu(\Dx) \left(1-\frac{\lambda}{2} - \frac{\beta \lambda \Dx^2}{2 \gamma \mu(\Dx)}\right) \norm{D^t_{+} \Dm u^n}^2 \le C,
\end{aligned}
\end{equation}
where $\lambda = \frac{\Dt}{\Dx}$.

We must now use the CFL condition \eqref{eq:cfl} to conclude that for some $\delta \in (0,1)$
\begin{align}\label{eq:cfl_conds}
1-\frac{\lambda}{2} - \frac{\beta \lambda \Dx^2}{2 \gamma \mu(\Dx)} > \delta, \,\, \beta -\frac{M^2}{ 2\eps_1} > \delta, \,\,\text{and} \,\,
 1 - M \eps - 2 M^2 \eps_1 - \frac{\lambda}{2}  > \delta.
\end{align}
Note that $\beta > \delta$ must hold. Furthermore, choosing $\eps= 1, \eps_1=0.5$, the third contraint in \eqref{eq:cfl_conds} can be written as
\[
M + M^2 + \frac{\lambda}{2} < 1 - \delta
\]
which would require $M<1$. Assuming $M$ is small enough (this can be done upto rescaling) to ensure the existence of a $\delta \in (0,1)$, such that $2 M + \frac{\lambda}{2}  < \min{(1 - \delta, \beta - \delta)}$. This in turn would imply
\begin{align*}
M^2  + M  +  \frac{\lambda}{2} &< 2M + \frac{\lambda}{2} < 1-\delta,\\
\beta - M^2 & > \beta - 2M >  \delta
\end{align*}
In other words, we get the last two conditions of \eqref{eq:cfl_conds}. Thus, the CFL condition \eqref{eq:cfl} ensures all the conditions of \eqref{eq:cfl_conds} are satisfied. Consequently, we see that estimate \eqref{eq:std} holds. 

In order to prove estimates \eqref{eq:est_final}, \eqref{eq:est_final1}, \eqref{eq:est_final2}, and \eqref{eq:est_final3}, 
we multiply the inequality \eqref{eq:sum1} by $\Dt$ and subsequently sum over all $n=0,1,\cdots, N-1$ to reach 
\begin{equation*}
\label{eq:st}
\begin{aligned}
\frac{1}{2} \norm{u^N}^2 &+ \left(\frac{\gamma \mu(\Dx)}{2} + \frac{\beta \Dx^2}{2} \right) \norm{\Dm u^N}^2 + \delta \Dx \Dt \sum_{n}\norm{\Dm u^n}^2 \\
& \qquad \qquad \qquad \quad + \delta \Dx\Dt \sum_{n} \norm{D^t_{+} u^n}^2 + \delta \gamma  \Dx \mu(\Dx) \Dt \sum_{n}\norm{D^t_{+} \Dm u^n}^2 \le \mathcal{F}(u_0),
\end{aligned}
\end{equation*}
where 
\begin{align*}
\mathcal{F}(u_0)&:= \frac{1}{2} \norm{u_0}^2 + \left(\frac{\gamma \mu(\Dx)}{2} + \frac{\beta \Dx^2}{2} \right) \norm{\Dm u_{0}}^2 \\
& \hspace{4.5cm} \le \frac12 \left( \norm{u_0}^2 +  \left(\frac{\gamma \mu(\Dx)}{2} + \frac{\beta \Dx^2}{2} \right) \frac{2}{\Dx^2} \norm{u_0}^2 \right) \le C.
\end{align*}
This essentially finishes the proof of the lemma.

\end{proof}



\section{Convergence Analysis}
\label{sec:convergence}
Having obtained all the necessary a priori bounds in the previous section, we are ready to prove that the 
approximate solutions generated by the scheme \eqref{eq:scheme}
converge strongly to a weak solution of \eqref{eq:discont}, at least along a subsequence. The general
strategy of the convergence proof is in the spirit of the one used by DiPerna \cite{diperna}, and it has been
used in various contexts by several different authors. However, as we pointed out
earlier, we shall use a simplified compensated compactness result in the spirit of \cite[Lemma 3.2]{towers}.
In what follows, using a priori estimates derived in Section ~\ref{sec:energy}, 
we first demonstrate the desired $H^{-1}_{\loc}$
compactness of $\seq{u_{\Dx}}_{\Dx>0}$. Then, an
application of the compensated compactness Theorem ~\ref{thm:compcomp} gives the desired 
strong convergence in $L_{\mathrm{loc}}^p$ for any $p < \infty$. To achieve our goal, we start with the following crucial lemma:

\begin{lemma}[$H^{-1}_{\mathrm{loc}}$ compactness]
  \label{lem:compact}
  The sequence
  \begin{align*}
    \seq{\eta_i( k(x), u_\Dx)_t + q_i(k(x), u_\Dx)_x}_{\Dx>0} \,\, \text{is compact
      in} \,\, H^{-1}_{\loc}(\R \times [0,T]),
  \end{align*}
  where $\eta_i$ and $q_i$ are given by \eqref{eq:entropies}, for $i
  =1,2$.
\end{lemma}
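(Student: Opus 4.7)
The plan is to invoke Murat's lemma \ref{lem:Murat}. For each $i \in \{1,2\}$ set
\[
\mathcal{L}_i^{\Dx} := \eta_i(k_{\Dx}, u_{\Dx})_t + q_i(k_{\Dx}, u_{\Dx})_x,
\]
and seek a splitting $\mathcal{L}_i^{\Dx} = \mathcal{L}_{i,1}^{\Dx} + \mathcal{L}_{i,2}^{\Dx}$ with $\mathcal{L}_{i,1}^{\Dx}$ compact in $H^{-1}_{\loc}$ and $\mathcal{L}_{i,2}^{\Dx}$ bounded in $\CMloc$, together with a uniform $W^{-1,\infty}_{\loc}$ bound on $\mathcal{L}_i^{\Dx}$. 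The latter is automatic: Assumption \ref{def:b1} gives a uniform $L^{\infty}$ bound on $u_{\Dx}$ which, together with \ref{def:w2}--\ref{def:w3}, yields uniform $L^{\infty}_{\loc}$ bounds on $\eta_i(k_{\Dx},u_{\Dx})$ and $q_i(k_{\Dx},u_{\Dx})$, so their distributional derivatives automatically land in $W^{-1,\infty}_{\loc}$.

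The technical core is a weak discrete entropy identity. For $i=1$ I test the scheme \eqref{eq:scheme} against $\Dx\,\varphi^n_j$, and for $i=2$ I first multiply \eqref{eq:scheme} by $k_{j+\frac12}f'(u^n_j)$ and then test against $\Dx\,\varphi^n_j$, with $\varphi\in C^{\infty}_c(\R\times[0,T))$. Using summation-by-parts, the discrete Leibniz rule, the identity \eqref{eq:iden}, and the second-order discrete chain rule
\[
D_{\pm} g(u_j) = g'(u_j) D_{\pm} u_j \pm \tfrac{\Dx}{2} g''(\xi_{j\pm\frac12}) (D_{\pm}u_j)^2,
\]
the resulting identity is expected to take the schematic form
\[
\langle \mathcal{L}_i^{\Dx}, \varphi\rangle = \mathcal{A}_i^{\Dx}(\varphi) + \mathcal{B}_i^{\Dx}(\varphi) + \mathcal{C}_i^{\Dx}(\varphi) + \mathcal{D}_i^{\Dx}(\varphi),
\]
in which $\mathcal{A}_i^{\Dx}$ collects the Engquist--Osher numerical viscosity and the chain-rule remainders (all of the form $\Dx\Dt\sum_{j,n}(D_-u^n_j)^2$ weighted by bounded factors), $\mathcal{B}_i^{\Dx}$ comes from $\beta\Dx D_+D_-u^n_j$, $\mathcal{C}_i^{\Dx}$ comes from $\gamma\mu(\Dx)D^t_+D_+D_-u^n_j$, and $\mathcal{D}_i^{\Dx}$ gathers the $k$-variation terms of the type $f(c)\,D_- k_{j+\frac12}$ (for $i=1$) and their analogs (for $i=2$).

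For the classification, estimate \eqref{eq:est_final1} bounds $\mathcal{A}_i^{\Dx}$ in $L^1_{\loc}$ and hence in $\CMloc$; by the BV bound on $k$ and the $L^{\infty}$ bound on $u_{\Dx}$ the same holds for $\mathcal{D}_i^{\Dx}$, so both go into $\mathcal{L}_{i,2}^{\Dx}$. After one spatial summation-by-parts, $\mathcal{B}_i^{\Dx}$ is the distributional derivative of $\beta\Dx\,\eta_i'(u_{\Dx})D_- u_{\Dx}$, whose $L^2$-norm is $\bigO{\Dx^{1/2}}$ by \eqref{eq:est_final1}, so $\mathcal{B}_i^{\Dx}\to 0$ in $H^{-1}_{\loc}$. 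Likewise, a space-time summation-by-parts expresses $\mathcal{C}_i^{\Dx}$ as one spatial derivative of a bracket whose $L^2$-norm, by \eqref{eq:est_final3}, is of order $(\mu(\Dx)/\Dx)^{1/2} = \bigO{\Dx^{1/2}}$ under the standing assumption $\mu(\Dx)=\bigO{\Dx^2}$; hence $\mathcal{C}_i^{\Dx}\to 0$ in $H^{-1}_{\loc}$ as well. Both fall into $\mathcal{L}_{i,1}^{\Dx}$, and Lemma \ref{lem:Murat} delivers the compactness claim.

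The principal obstacle I foresee is the bookkeeping for $\mathcal{C}_i^{\Dx}$: the triple difference $D^t_+D_+D_-u^n_j$ must be rearranged so that exactly one spatial derivative is pulled outside (to land in $H^{-1}$) while the surviving factor $\mu(\Dx)D^t_+D_- u_{\Dx}$ is controlled via \eqref{eq:est_final3}, and so that the residual cross terms obtained when the chain rule is applied to $\eta_i'(u_{\Dx})$ produce only lower-order pieces of the type $\mu(\Dx)\Dx(D_-u)^2$ which can be absorbed into $\mathcal{L}_{i,2}^{\Dx}$. This will require a careful interplay between the spatial and temporal summation-by-parts, the discrete Leibniz rule, and the identity \eqref{eq:iden}.
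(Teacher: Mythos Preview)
Your plan is essentially the paper's own strategy: invoke Murat's lemma after using the scheme to decompose the entropy dissipation into pieces controlled either in $\CMloc$ (numerical viscosity, chain-rule remainders, $k$-variation) or in $H^{-1}_{\loc}$ (the diffusion and dispersion contributions), with the a~priori bounds \eqref{eq:est_final1}--\eqref{eq:est_final3} furnishing the estimates. The paper executes exactly this program (Lemmas~\ref{lem:compact1}--\ref{lem:compact6}), so there is no real methodological difference.

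Two points in your write-up need tightening. First, you define $\mathcal{L}_i^{\Dx}$ with $k_{\Dx}$, but the lemma concerns $k(x)$; the passage from $k(x)$ to $k_{\Dx}$ must be shown to be $H^{-1}_{\loc}$-compact separately (this is the paper's term $\mathcal{E}'_{\Dx}$, handled by $\|k-k_{\Dx}\|_{L^p}\to 0$), and the staggered alignment of the $k$- and $u$-grids produces a further piece (the paper's $\mathcal{E}^2_{\Dx}$, controlled by $\abs{k}_{BV}$) that you do not mention. Second, your classification of $\mathcal{B}_i^{\Dx}$ and $\mathcal{C}_i^{\Dx}$ as purely $H^{-1}_{\loc}$-compact is too optimistic: after one spatial summation-by-parts on $\eta_i'(u^n_j)\,D_+D_-u^n_j$ (and likewise on the dispersion term), the discrete Leibniz rule throws the $D_-$ onto \emph{both} $\varphi$ and $\eta_i'$. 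The part hitting $\varphi$ is indeed the $x$-derivative of an $L^2$-small quantity, but the part hitting $\eta_i'$ yields terms like $\Dx\,\eta_i''(\cdot)(D_-u^n_j)^2$ and $\mu(\Dx)\,\eta_i''(\cdot)\,D_-u^n_j\,D^t_+D_-u^n_j$, which are only bounded in $\CMloc$ (via \eqref{eq:est_final1} and Cauchy--Schwarz with \eqref{eq:est_final3}), not vanishing in $H^{-1}$. You acknowledge this cross-term issue for $\mathcal{C}_i^{\Dx}$ in your closing paragraph, but the same splitting is already needed for $\mathcal{B}_i^{\Dx}$; once you route those residuals into $\mathcal{L}^{\Dx}_{i,2}$, Murat's lemma closes the argument exactly as the paper does.
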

\begin{proof}
To begin with, let us assume that $\eta = \eta_i$, for $i=1$ or $i=2$, and $\varphi$ be a test function with compact support such that $\varphi(x)=0$, for all
$|x|>|x_{J+\frac12}|$, and for $t \ge N \Dt$, for some $J$ and $N$. With this $\varphi$, we define the following functional
\begin{align*}
\langle \mathcal{L}_{\Dx}(\eta, q), \varphi \rangle & := -\langle \eta( k(x), u_\Dx)_t + q(k(x), u_\Dx)_x , \varphi \rangle \\
&= \int_0^T\int_{\R}      \eta(k(x),u_{\Dx}) \varphi_t      +   q(k(x),u_{\Dx}) \varphi_x \,dx \,dt
:= \langle \mathcal{E}_{\Dx}(\eta,q), \varphi \rangle +  \langle \mathcal{E}'_{\Dx}(\eta,q), \varphi \rangle,
\end{align*}
where
\begin{align*}
\langle \mathcal{E}'_{\Dx}(\eta,q), \varphi \rangle&=\int_0^T\int_{\R}  \left(  \eta(k(x),u_{\Dx})  - \eta(k_{\Dx},u_{\Dx})    \right) \varphi_t   \,dx dt \\
& \qquad \qquad \qquad \qquad \qquad \qquad + \int_0^T\int_{\R} \left(  q(k(x),u_{\Dx})   - q(k_{\Dx},u_{\Dx}) \right) \varphi_x  \,dx dt,
\end{align*}
and
\begin{align*}
\langle \mathcal{E}_{\Dx}(\eta,q), \varphi \rangle=\int_0^T\int_{\R}      \eta(k_{\Dx},u_{\Dx}) \varphi_t  + q(k_{\Dx},u_{\Dx}) \varphi_x \,dx \,dt.
\end{align*}
In what follows,  we let $\Pi_T$ denote an arbitrary but fixed bounded open subset of $\R \times [0,T]$. Let
$r \in (1,2]$ and set $p = \frac{r}{r-1} \in [1, \infty)$. With $\varphi \in W_0^{1,r}(\Pi_T)$, we have by H\"{o}lder's inequality
\begin{align*}
\abs{\langle \mathcal{E}'_{\Dx}(\eta,q) , \varphi \rangle} \le C \norm{k - k_{\Dx} }_{L^p(\Pi_T)} \norm{\varphi}_{W_0^{1,r} (\Pi_T)} \rightarrow 0 \quad \text{as} \, \,\Dx \downarrow 0,
\end{align*}
so that 
\begin{align}
\label{eq:1}
\seq{\mathcal{E}'_{\Dx}(\eta,q)}_{\Dx>0} \, \text{ is compact in} \, \,W^{-1,r}(\Pi_T), \quad r \in (1,2].
\end{align}
Next we focus on the other term 
\begin{align*}
\langle \mathcal{E}_{\Dx}(\eta,q), \varphi \rangle&=\sum_j \sum_{n=0}^{N-1} \int_{t^n}^{t^{n+1}}\int_{\xm}^{\xp}   \eta(k^{\Dx},u_j^n) \varphi_t      +   q(k^{\Dx},u_j^n) \varphi_x \,dx dt\\
& = \sum_j\sum_{n=0}^{N-1} \int_{t^n}^{t^{n+1}}\int_{\xm}^{x_j}   \eta(\km,u_j^n) \varphi_t      +   q(\km,u_j^n) \varphi_x \ dx dt\\
&\qquad \qquad \qquad \quad +\sum_j \sum_{n=0}^{N-1} \int_{t^n}^{t^{n+1}}\int_{x_j}^{\xp}   \eta(\kp,u_j^n) \varphi_t      +   q(\kp,u_j^n) \varphi_x \ dx dt\\
&:= \langle \mathcal{E}^1_{\Dx}(\eta,q), \varphi \rangle + \langle \mathcal{E}^2_{\Dx}(\eta,q), \varphi \rangle
\end{align*}
where 
\begin{align}
\label{eq:2}
&\langle \mathcal{E}^1_{\Dx}(\eta,q) ,\varphi \rangle=\sum_j\sum_{n=0}^{N-1} \int_{t^n}^{t^{n+1}}\int_{\xm}^{\xp}   \eta(\km,u_j^n) \varphi_t      +   q(\km,u_j^n) \varphi_x \ dx dt 
\end{align}
and
\begin{align}
\label{eq:3}
&\langle \mathcal{E}^2_{\Dx}(\eta,q), \varphi \rangle  =  \sum_j \sum_{n=0}^{N-1} \int_{t^n}^{t^{n+1}}\int_{x_j}^{\xp}   
\left( \eta(\kp,u_j^n) -\eta(\km,u_j^n)\right)   \varphi_t   \ dx dt  \\
& \hspace{3cm} + \sum_j \sum_{n=0}^{N-1} \int_{t^n}^{t^{n+1}}\int_{x_j}^{\xp}  \left( q(\kp,u_j^n)- q(\km,u_j^n)\right)\varphi_x \ dx dt . \notag
\end{align}
The proof is essentially complete if we assume that both $\seq{\mathcal{E}^1_{\Dx}(\eta,q)}_{\Dx>0}$ and $\seq{\mathcal{E}^2_{\Dx}(\eta,q)}_{\Dx>0}$ are compact in $H^{-1}_{\loc}(\Pi_T)$. 
\end{proof}

In the proof Lemma~\ref{lem:compact}, we claimed the compactness of  $\seq{\mathcal{E}^1_{\Dx}(\eta,q)}_{\Dx>0}$ and $\seq{\mathcal{E}^2_{\Dx}(\eta,q)}_{\Dx>0}$. We try to justify this claim below. For the rest of this section, we assume that the conditions stated in Lemma~\ref{lem:compact} hold. We will also continue to use $\Pi_T$ introduced in the proof of the above lemma.

\begin{lemma}  \label{lem:compact1}
\begin{align}
\label{eq:4}
\seq{\mathcal{E}^2_{\Dx}(\eta,q)}_{\Dx>0} \, \text{ is compact in} \, \,H^{-1}_{\loc}(\Pi_T).
\end{align}
\end{lemma}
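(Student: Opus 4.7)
The plan is to prove directly that $\mathcal{E}^2_\Dx(\eta, q)$ converges strongly to zero in $H^{-1}(\Pi_T)$, which immediately implies compactness in $H^{-1}_{\loc}(\Pi_T)$. The strategy hinges on two ingredients: the Lipschitz dependence of $\eta_i$ and $q_i$ on the coefficient $k$, and a quantitative $L^2$ estimate that exploits both the $BV_{\loc}$ control (Assumption~\ref{def:w2}) and the uniform $L^\infty$ bound on $k$.

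First, by Assumption~\ref{def:w3} and Assumption~\ref{def:b1}, both $k \mapsto \eta_i(k,u)$ and $k \mapsto q_i(k,u)$ are Lipschitz in $k$ uniformly for $u \in [a',b']$. Setting $R_j^n := (x_j,\xp) \times (t^n,t^{n+1})$ and using the expression \eqref{eq:3}, I obtain
\begin{align*}
\abs{\langle \mathcal{E}^2_\Dx(\eta,q), \varphi \rangle} \le L \int_{\Pi_T} \Psi_\Dx(x,t)\bigl(\abs{\varphi_t} + \abs{\varphi_x}\bigr) \, dx\, dt,
\end{align*}
where $\Psi_\Dx(x,t) := \sum_{j,n} \abs{\kp - \km}\, \mathds{1}_{R_j^n}(x,t)$ and $L>0$ depends only on the uniform $C^1$ bounds of $\eta_i, q_i$ on $[\alpha,\beta]\times[a',b']$. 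Applying the Cauchy--Schwarz inequality then yields
\begin{align*}
\abs{\langle \mathcal{E}^2_\Dx(\eta,q), \varphi \rangle} \le \sqrt{2}\, L\, \norm{\Psi_\Dx}_{L^2(\Pi_T)}\, \norm{\varphi}_{H^1_0(\Pi_T)}.
\end{align*}

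It remains to establish $\norm{\Psi_\Dx}_{L^2(\Pi_T)} = \bigO{\sqrt{\Dx}}$. Using the elementary inequality $\abs{\kp - \km}^2 \le 2\norm{k}_{\infty} \abs{\kp - \km}$, together with the disjointness of the rectangles $R_j^n$ (each of area $\Dx\Dt/2$) and the restriction of the $j$-sum to indices for which $R_j^n$ meets a fixed compact set $K \supset \Pi_T$, Assumption~\ref{def:w2} gives
\begin{align*}
\norm{\Psi_\Dx}_{L^2(\Pi_T)}^2 = \sum_{j,n} \abs{\kp - \km}^2\, \frac{\Dx\, \Dt}{2} \le \norm{k}_{\infty}\, T\, \abs{k}_{BV(K)}\, \Dx.
\end{align*}
Combining the displays yields $\norm{\mathcal{E}^2_\Dx(\eta,q)}_{H^{-1}(\Pi_T)} \le C\sqrt{\Dx} \to 0$, which establishes \eqref{eq:4}.

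The only real obstacle is recognizing that, although $\abs{\kp - \km}$ is not pointwise small near the jumps of $k$, the $L^2$ norm of $\Psi_\Dx$ still picks up a factor of $\sqrt{\Dx}$ by trading one copy of $\abs{\kp - \km}$ for $\norm{k}_{\infty}$ and exploiting the summability $\sum_j \abs{\kp - \km} \le \abs{k}_{BV(K)}$. Once this trick is in place, the rest is a routine Cauchy--Schwarz combined with the Lipschitz regularity of $\eta$ and $q$ in the $k$-slot; an alternative route via Murat's Lemma~\ref{lem:Murat} would work as well, but the direct $H^{-1}$ argument above is cleaner.
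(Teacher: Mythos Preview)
Your proof is correct and follows essentially the same route as the paper: both bound $\mathcal{E}^2_\Dx$ directly in $H^{-1}$ by exploiting the Lipschitz dependence of $\eta_i,q_i$ on $k$, applying Cauchy--Schwarz, and using the $BV$ control $\sum_j\abs{\kp-\km}\le\abs{k}_{BV}$ together with the fact that each half-cell has width $\Dx/2$ to extract the factor $\sqrt{\Dx}$. The only cosmetic difference is packaging---you encode everything in the single function $\Psi_\Dx$ and apply one space--time Cauchy--Schwarz, whereas the paper treats the $\varphi_t$ and $\varphi_x$ terms separately and iterates Cauchy--Schwarz in $x$, then in $j$, then in $t$; your bound $\abs{\kp-\km}^2\le 2\norm{k}_\infty\abs{\kp-\km}$ plays the same role as the paper's $\bigl(\sum_j\abs{\kp-\km}^2\bigr)^{1/2}\le\abs{k}_{BV}$.
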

\begin{proof}
Consider the expression given by \eqref{eq:3}, which is split as
\begin{align*}
&\langle \mathcal{E}^{2}_{\Dx}(\eta,q), \varphi \rangle  := \langle \mathcal{E}^{2,1}_{\Dx}(\eta,q), \varphi \rangle  + \langle \mathcal{E}^{2,2}_{\Dx}(\eta,q), \varphi \rangle 
\end{align*} 
where
\begin{align*}
&\langle \mathcal{E}^{2,1}_{\Dx}(\eta,q), \varphi \rangle  =   \sum_j \sum_{n=0}^{N-1} \int_{t^n}^{t^{n+1}}\int_{x_j}^{\xp}   
\left( \eta(\kp,u_j^n) -\eta(\km,u_j^n)\right)   \varphi_t   \ dx dt \\
&\langle \mathcal{E}^{2,2}_{\Dx}(\eta,q), \varphi \rangle = \sum_j \sum_{n=0}^{N-1} \int_{t^n}^{t^{n+1}}\int_{x_j}^{\xp}  \left( q(\kp,u_j^n)- q(\km,u_j^n)\right)\varphi_x \ dx dt.
\end{align*}
Now, using Cauchy-Schwartz inequality repeatedly,  we obtain
\begin{align*}
\abs{\langle \mathcal{E}^{2,1}_{\Dx}(\eta,q), \varphi \rangle} & = \abs{\sum_j \sum_{n=0}^{N-1} \int_{t^n}^{t^{n+1}}\int_{x_j}^{\xp}   \left( \eta(\kp,u^n_j) -\eta(\km,u^n_j)\right)   \varphi_t   \ dx dt} \\
&\leq C \,  \sum_j \int_0^T\int_{x_j}^{\xp}\, \abs{\kp-\km} \, \abs {\varphi_t} \, dx \, dt\\
&\leq C  \,  \sum_j \int_0^T\, \abs{\kp-\km}\, (\Dx)^\frac12 \, \left( \int_{x_j}^{\xp} \, \abs {\varphi_t}^2 \, dx\right)^\frac12 dt\\
&\leq C \, (\Dx)^\frac12 \int_0^T\left( \sum_j \abs{\kp-\km}^2\right)^\frac12 \left( \sum_j\int_{x_j}^{\xp} \, \abs {\varphi_t}^2 \, dx\right)^\frac12 dt \\
&\leq C \, (\Dx)^\frac12 \,  \abs{k}_{BV}\,  T^{1/2} \, \norm{\varphi}_{H^1(\Pi_T)}.
\end{align*}
A similar argument can be used to show almost \emph{verbatim}
\begin{align*}
\abs{\langle \mathcal{E}^{2,2}_{\Dx}(\eta,q), \varphi \rangle} \leq \, C (\Dx)^\frac12 \,  \abs{k}_{BV}\,  T^{1/2} \, \norm{\varphi}_{H^1(\Pi_T)}.
\end{align*}
Therefore, summing up, we have 
\begin{align*}
\abs{\langle \mathcal{E}^{2}_{\Dx}(\eta,q), \varphi \rangle} \leq \, C(\Dx)^\frac12 \, \abs{k}_{BV}\,  T^{1/2} \, \norm{\varphi}_{H^1(\Pi_T)}.
\end{align*}
Thus, 
\begin{align*}
\seq{\mathcal{E}^{2}_{\Dx}(\eta,q)}_{\Dx>0} \, \text{ is compact in} \, \,H^{-1}_{\loc}(\Pi_T).
\end{align*}
\end{proof}
Next, we need to show the compactness of $\seq{\mathcal{E}^{1}_{\Dx}(\eta,q)}_{\Dx>0}$. First note that we can express \eqref{eq:2} as
\begin{align*}
&\langle \mathcal{E}^1_{\Dx}(\eta,q), \varphi \rangle  =- \Dt\sum_j \sum_{n=0}^{N-1} \int_{\xm}^{\xp}  
\Dpt \eta(\km,u^n_j) \ \varphi^{n+1}(x)\ dx \\
&\qquad \qquad \qquad \quad -\sum_{j} \int_{I_j} \eta(\km,u^0_j) \varphi^{0}(x)\ dx    -\Dx \sum_j \sum_{n=0}^{N-1} \int_{t^n}^{t^{n+1}} \Dm q(\km,u^n_j)   \ \varphi_{j-\frac12}(t) \ dt,
\end{align*}
where $\varphi_{j-\frac12}(t):=\varphi(\xm,t)$, and $\varphi^{n+1}(x):=\varphi(x,t^{n+1})$. 
This further implies that
\begin{align*}
-\langle \mathcal{E}^1_{\Dx}(\eta,q), \varphi \rangle & = \sum_j \sum_{n=0}^{N-1} \int_{t^n}^{t^{n+1}}\int_{\xm}^{\xp}   \left(\Dpt\eta(\km,u^n_j) +\Dm q(\km,u^n_j)\right)  \ \varphi  \ dx dt\\
& \qquad \qquad + \sum_j\sum_{n=0}^{N-1} \int_{t^n}^{t^{n+1}} \int_{\xm}^{\xp}\Dm q(\km,u^n_j)   \ \left(\varphi_{j-\frac12}(t)-\varphi(x,t)\right) \ dxdt\\
&\quad \qquad \qquad  + \sum_j\sum_{n=0}^{N-1} \int_{t^n}^{t^{n+1}} \int_{\xm}^{\xp}\Dpt \eta(\km,u^n_j)   \ \left(\varphi^{n+1}(x)-\varphi(x,t)\right) \ dxdt \\
&\qquad \qquad \qquad  \qquad \qquad  + \sum_{j} \int_{I_j} \eta(\km,u^0_j) \varphi^{0}(x)\ dx  \\
& \qquad \qquad   := \langle \mathcal{E}^{1,1}_{\Dx}(\eta,q), \varphi \rangle + \langle \mathcal{E}^{1,2}_{\Dx}(\eta,q), \varphi \rangle
\end{align*}
where
\begin{align}
\label{eq:5}
\langle \mathcal{E}^{1,1}_{\Dx}(\eta,q), \varphi \rangle & = \sum_j \sum_{n=0}^{N-1} \int_{t^n}^{t^{n+1}}\int_{\xm}^{\xp}   \left(\Dpt\eta(\km,u^n_j) +\Dm q(\km,u^n_j)\right)  \ \varphi  \ dx dt
\end{align}
while $\langle \mathcal{E}^{1,2}_{\Dx}(\eta,q), \varphi \rangle$ corresponds to the three remaining summation term. The compactness of $\seq{\mathcal{E}^{1,1}_{\Dx}(\eta,q)}_{\Dx>0}$ and $\seq{\mathcal{E}^{1,2}_{\Dx}(\eta,q)}_{\Dx>0}$ will ensure the compactness of $\seq{\mathcal{E}^{1}_{\Dx}(\eta,q)}_{\Dx>0}$
\begin{lemma} 
\label{lem:compact2}
\begin{align}
\label{eq:6}
\seq{\mathcal{E}^{1,2}_{\Dx}(\eta,q)}_{\Dx>0} \, \text{ is compact in} \, \,H^{-1}_{\loc}(\Pi_T).
\end{align}
\end{lemma}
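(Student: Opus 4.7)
I split $\mathcal{E}^{1,2}_{\Dx}$ into its three constituent summands: (A) the spatial consistency remainder involving $\Dm q(\km,u^n_j)[\varphi_{j-1/2}(t)-\varphi(x,t)]$, (B) the temporal remainder involving $\Dpt \eta(\km,u^n_j)[\varphi^{n+1}(x)-\varphi(x,t)]$, and (C) the initial-data functional $\sum_j\int_{I_j}\eta(\km,u^0_j)\varphi^0(x)\,dx$. The plan is to show that (A) and (B) tend to zero in the $H^{-1}_\loc$-norm at rate $\Dx^{1/2}$, while (C) is handled through Murat's Lemma~\ref{lem:Murat}, since it is a distribution concentrated on $\{t=0\}$ and hence not of small norm.

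For (A), the key estimate is the cell-wise Poincar\'e-type inequality
\begin{equation*}
\int_{I_j}|\varphi_{j-1/2}(t)-\varphi(x,t)|^2\,dx\le \Dx^2\int_{I_j}|\varphi_x(\cdot,t)|^2\,dx,
\end{equation*}
together with the chain-rule bound $|\Dm q(\km,u^n_j)|\le C(|\Dm\km|+|\Dm u^n_j|)$ that follows from the $C^1$-smoothness of $q$ and the uniform boundedness of the arguments. Two successive applications of Cauchy--Schwarz then yield
\begin{equation*}
|\langle (A),\varphi\rangle|\le \Dx^{3/2}\Dt^{1/2}\Bigl(\sum_{j,n}|\Dm q(\km,u^n_j)|^2\Bigr)^{1/2}\|\varphi\|_{H^1(\Pi_T)}.
\end{equation*}
The piecewise-$C^1$ hypothesis on $k$ (finitely many jumps in $\supp\varphi$) gives $\sum_j|\Dm\km|^2\le C/\Dx^2$ locally, so that $\sum_{j,n}|\Dm\km|^2=O(1/(\Dt\Dx^2))$; estimate~\eqref{eq:est_final1} supplies $\sum_{j,n}|\Dm u^n_j|^2\le C/(\Dx^2\Dt)$. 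Combined with the CFL condition~\eqref{eq:cfl} ($\Dt\le C\Dx$), these give $|\langle (A),\varphi\rangle|\le C\Dx^{1/2}\|\varphi\|_{H^1}$. Term (B) is handled symmetrically via the time-Poincar\'e bound $\int_{t^n}^{t^{n+1}}|\varphi^{n+1}(x)-\varphi(x,t)|^2\,dt\le \Dt^2\int_{t^n}^{t^{n+1}}|\varphi_t|^2\,dt$, the frozen-in-time identity $\Dpt\eta(\km,u^n_j)=\eta_u(\km,\cdot)\Dpt u^n_j$, and estimate~\eqref{eq:est_final2}, producing the same $\Dx^{1/2}$ decay after invoking the CFL scaling.

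For term (C), observe that $\langle (C),\varphi\rangle=\int_\R \eta(k_\Dx(x),u_{\Dx,0}(x))\,\varphi(x,0)\,dx$, a distribution supported on the line $\{t=0\}$ with density uniformly bounded in $L^\infty$; hence $\{(C)\}_{\Dx>0}$ is bounded in $\CMloc(\Pi_T)$. A direct estimate using the two-dimensional Sobolev trace $W^{1,1}(\Pi_T)\hookrightarrow L^1(\R\times\{0\})$ shows that it is simultaneously bounded in $W^{-1,\infty}_\loc(\Pi_T)$. Applying Murat's Lemma~\ref{lem:Murat} to the decomposition $\mathcal{E}^{1,2}_\Dx=[(A)+(B)]+(C)$, with $(A)+(B)$ compact in $H^{-1}_\loc$, $(C)$ bounded in $\CMloc$, and the full sum therefore bounded in $W^{-1,\infty}_\loc$, delivers the required compactness of $\{\mathcal{E}^{1,2}_\Dx\}_{\Dx>0}$ in $H^{-1}_\loc(\Pi_T)$.

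The main obstacle is term (C): being a Dirac-in-time distribution, it is not small in $H^{-1}$ and the direct rate-based argument used for (A) and (B) cannot apply to it. The Murat route requires verifying \emph{both} the $\CMloc$ and the $W^{-1,\infty}_\loc$ bounds, and the latter depends on the Sobolev trace inequality in dimension two --- a subtlety that must not be overlooked.
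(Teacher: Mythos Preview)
Your proof is correct and follows essentially the same three-way split as the paper's argument: spatial remainder, temporal remainder, and initial-data term, with the first two shown to vanish in $H^{-1}_{\loc}$ at rate $\Dx^{1/2}$ and the third placed in $\CMloc$ so that Murat's lemma can be invoked.

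There is one minor difference worth pointing out. For the spatial remainder (your term (A)), the paper further decomposes $\Dm q(\km,u^n_j)$ into a $u$-difference piece and a $k$-difference piece, showing the former is $H^{-1}_{\loc}$-compact via \eqref{eq:est_final1} and tossing the latter into the $\CMloc$ bucket using only $\abs{k}_{BV}$. You instead keep both pieces together and control the $k$-contribution through the bound $\sum_j\abs{\Dm\km}^2\le C/\Dx^2$. This is legitimate (it follows already from $k\in BV$, via $\sum_j\abs{\Delta_-\km}^2\le 2\norm{k}_\infty\abs{k}_{BV}$, so your appeal to ``piecewise-$C^1$'' is unnecessary), and it buys you a slightly cleaner argument with one fewer Murat application. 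The paper's route, on the other hand, avoids the $\ell^2$-of-increments observation altogether.

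You are also more explicit than the paper about verifying the $W^{-1,\infty}_{\loc}$ boundedness hypothesis in Lemma~\ref{lem:Murat}, which the paper tacitly assumes. Your trace argument for term (C) is correct; for completeness, note that terms (A) and (B) are likewise bounded in $W^{-1,\infty}_{\loc}$ by the trivial pointwise estimates $\abs{\Dx\,\Dm q}\le C$ and $\abs{\Dt\,\Dpt\eta}\le C$, which give $\abs{\langle(A),\varphi\rangle}\le C\norm{\varphi_x}_{L^1}$ and $\abs{\langle(B),\varphi\rangle}\le C\norm{\varphi_t}_{L^1}$.
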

\begin{proof} Firstly, we split the $\mathcal{E}^{1,2}_{\Dx}(\eta,q)$ as
\begin{align*}
&\langle \mathcal{E}^{1,2}_{\Dx}(\eta,q), \varphi \rangle  := \langle \mathcal{E}^{1,2,1}_{\Dx}(\eta,q), \varphi \rangle  + \langle \mathcal{E}^{1,2,2}_{\Dx}(\eta,q), \varphi \rangle  +  \mathcal{E}^{1,2,3}_{\Dx}(\eta,q), \varphi \rangle
\end{align*}
where
\begin{align*}
&\langle\mathcal{E}^{1,2,1}_{\Dx}(\eta,q),\varphi\rangle =  \sum_j\sum_{n=0}^{N-1} \int_{t^n}^{t^{n+1}} \int_{\xm}^{\xp}\Dm q(\km,u^n_j)   \ \left(\varphi_{j-\frac12}(t)-\varphi(x,t)\right) \ dxdt\\
&\langle\mathcal{E}^{1,2,2}_{\Dx}(\eta,q),\varphi\rangle = \sum_j\sum_{n=0}^{N-1} \int_{t^n}^{t^{n+1}} \int_{\xm}^{\xp}\Dpt \eta(\km,u^n_j)   \ \left(\varphi^{n+1}(x)-\varphi(x,t)\right) \ dxdt \\
&\langle\mathcal{E}^{1,2,3}_{\Dx}(\eta,q),\varphi\rangle = \sum_{j} \int_{I_j} \eta(\km,u^0_j) \varphi^{0}(x)\ dx .
\end{align*}
We first estimate the term $\mathcal{E}^{1,2,1}_{\Dx}(\eta,q)$ as follows:
\begin{align*}
&\langle\mathcal{E}^{1,2,1}_{\Dx}(\eta,q),\varphi \rangle= \sum_{j,n} \int_{t^n}^{t^{n+1}}  \int_{I_j}\frac{q(\km,u^n_j)-q(k_{j-\frac32},u^n_{j-1})}{\Dx} \ 
\left(\varphi_{j-\frac12}-\varphi(x,t)\right) \ dx dt\\
&\quad \qquad \qquad = \, \underbrace{\sum_j \sum_{n=0}^{N-1} \int_{t^n}^{t^{n+1}}   \int_{\xm}^{\xp}\frac{q(\km,u^n_j)-q(\km,u^n_{j-1})}{\Dx} \left(\varphi_{j-\frac12}-\varphi(x,t)\right) \ dx dt}_{\langle I_{\Dx}(\eta,q),\varphi\rangle}\\
&\qquad\qquad \qquad \quad + \underbrace{\sum_j \sum_{n=0}^{N-1} \int_{t^n}^{t^{n+1}}    \int_{\xm}^{\xp}\frac{q(\km,u^n_{j-1})-q(k_{j-\frac32},u^n_{j-1})}{\Dx} \left(\varphi_{j-\frac12}-\varphi(x,t)\right) \ dx dt.}_{\langle II_{\Dx}(\eta,q),\varphi\rangle}
\end{align*}
Since $q'$ and $\varphi$ are both bounded, and $k$ is of bounded variation, we see that
\begin{align*}
\abs{\langle II_{\Dx}(\eta,q),\varphi\rangle} \leq C(T)\,\norm{\varphi}_{L^{\infty}(\Pi_T)}\, \abs{k}_{BV}.
\end{align*}
Hence we conclude that
\begin{align}
II_{\Dx}(\eta,q) \in \mathcal{M}_{\loc}(\Pi_T). \label{eq:7}
\end{align}
Now the term $I_{\Dx}(\eta,q)$ can be estimated, using Cauchy-Schwartz inequality repeatedly,  as follows
\begin{align*}
&\abs{\langle I_{\Dx}(\eta,q),\varphi\rangle} \leq C \sum_j \sum_{n=0}^{N-1} \int_{t^n}^{t^{n+1}}    \int_{\xm}^{\xp}\abs{\frac{u^n_j-u^n_{j-1}}{\Dx}} \, \int_{\xm}^x \abs{\varphi_x(\theta)} d \theta\, dx dt\\
&\quad \leq (\Dx)^\frac12 \, C\sum_j \sum_{n=0}^{N-1} \int_{t^n}^{t^{n+1}}   \int_{\xm}^{\xp}\abs{\frac{u^n_j-u^n_{j-1}}{\Dx}} \, \left(\int_{\xm}^x \abs{\varphi_x(\theta)} ^2d \theta\right)^{1/2}\, dx \,dt\\
&\qquad \leq (\Dx)^\frac12 \, C\sum_j \sum_{n=0}^{N-1} \int_{t^n}^{t^{n+1}}   \abs{u^n_j-u^n_{j-1}}\left(\int_{\xm}^{\xp} \abs{\varphi_x} ^2dx\right)^{1/2} \,dt\\
&\qquad \quad \leq (\Dx)^\frac12 \, C\left(  \Dt\sum_{n=0}^{N-1} \sum_j\int_{t^n}^{t^{n+1}}\abs{u^n_j-u^n_{j-1}}^2\, dt\right)^{\frac12} \, \norm{\varphi}_{H^1(\Pi_T)}.
\end{align*}
Therefore, in view of the a priori bound \eqref{eq:est_final1}, we reach to the conclusion that   
\begin{align}
\label{eq:8}
\seq{I_{\Dx}(\eta,q)}_{\Dx> 0} \, \text{ is compact in} \, \,H^{-1}_{\loc}(\Pi_T).
\end{align}
To sum up,  making use of \eqref{eq:7} and \eqref{eq:8}, along with the Lemma~\ref{lem:Murat}, we conclude that 
\begin{align}
\label{eq:9}
\seq{\mathcal{E}^{1,2,1}_{\Dx}(\eta,q)}_{\Dx> 0} \, \text{ is compact in} \, \,H^{-1}_{\loc}(\Pi_T).
\end{align}
A similar type of argument, with the use of Cauchy-Schwartz inequality, yields 
\begin{align*}
\abs{ \langle\mathcal{E}^{1,2,2}_{\Dx}(\eta,q),\varphi\rangle } \leq  C (\Dt)^{1/2} \left( \Dt \Dx^2 \sum_{n=0}^{N}\sum_j (\Dpt u_j^n)^2 \right)
^{1/2} \norm{\varphi}_{H^1(\Pi_T)}.
\end{align*}
Making use of the a priori bound \eqref{eq:est_final2}, we conclude that 
\begin{align}
\label{eq:10}
\seq{ \mathcal{E}^{1,2,2}_{\Dx}(\eta,q)}_{\Dx> 0} \, \text{ is compact in} \, \,H^{-1}_{\loc}(\Pi_T).
\end{align}
Moreover, note that the bound
\begin{align*}
\abs{ \langle\mathcal{E}^{1,2,3}_{\Dx}(\eta,q),\varphi\rangle } \leq  C \norm{\varphi}_{L^{\infty}(\R)},
\end{align*}
ensures that
\begin{align}
\label{eq:11}
\seq{ \mathcal{E}^{1,2,3}_{\Dx}(\eta,q)}_{\Dx>0} \in \mathcal{M}_{\loc}(\Pi_T).
\end{align}
Using \eqref{eq:9},\eqref{eq:10} and \eqref{eq:11} along the lines of Lemma~\ref{lem:Murat}, proves that $\seq{ \mathcal{E}^{1,2}_{\Dx}(\eta,q)}_{\Dx>0}$ is compact.
\end{proof}

Next we estimate the remaining term $\mathcal{E}^{1,1}_{\Dx}(\eta,q)$. We first note that Taylor's series expansion yields
\begin{equation}
\label{eta-expansion}
 \eta_u(\km,u^n_j)\Dpt u^n_j=\Dpt  \eta(\km,u^n_j) - \frac\Dt2 \eta_{uu}(\km,\theta^{n+\frac12}_j) (D^{t}_{+}u^n_j)^2,
\end{equation}
for some $\theta^{n+\frac12}_j$ between $u^n_j$ and $u^{n+1}_j$.
At this point, we shall make use of the fully-discrete scheme \eqref{eq:scheme} and \eqref{eta-expansion} 
to decompose the term $\mathcal{E}^{1,1}_{\Dx}(\eta,q)$ as follows:
\begin{align*}
&\langle\mathcal{E}_{\Dx}^{1,1}(\eta,q),\varphi \rangle=  \sum_j \sum_{n=0}^{N-1} \int_{t^n}^{t^{n+1}} \int_{\xm}^{\xp}   \left( \Dpt\eta(\km,u^n_j) +\Dm q(\km,u^n_j)\right) \varphi  \ dx dt\\
&\qquad \qquad \qquad \quad := \langle\mathcal{E}_{\Dx}^{1,1,1}(\eta,q),\varphi\rangle\, +\, \langle\mathcal{E}_{\Dx}^{1,1,2}(\eta,q),\varphi\rangle\, +\,\langle\mathcal{E}_{\Dx}^{1,1,3}(\eta,q),\varphi\rangle\, + \, \langle\mathcal{E}_{\Dx}^{1,1,4}(\eta,q),\varphi\rangle,
\end{align*}
where
\begin{align*}
& \langle\mathcal{E}_{\Dx}^{1,1,1}(\eta,q),\varphi\rangle= \, \sum_{j} \sum_{n=0}^{N-1} \int_{t^n}^{t^{n+1}} \int_{I_j} \left[ -\eta_u(\km,u^n_j) \Dm(\kp\fhp) \, +\, \Dm q(\km, u^n_j)\right] \varphi  \, dx dt,\\
& \langle\mathcal{E}_{\Dx}^{1,1,2}(\eta,q),\varphi\rangle= \,\beta\, \sum_j \sum_{n=0}^{N-1}\int_{t^n}^{t^{n+1}}\int_{I_j} \eta_u(\km,u^n_j) \Dx \, (\Dp \Dm u^n_j) \, \,\varphi  \, dx dt,\\
& \langle\mathcal{E}_{\Dx}^{1,1,3}(\eta,q),\varphi\rangle = \,\gamma\, \sum_j\sum_{n=0}^{N-1} \int_{t^n}^{t^{n+1}}\int_{I_j} \eta_u(\km,u^n_j) \mu(\Dx) \, (\Dpt\Dp \Dm u^n_j) \, \,\varphi  \, dx dt,\\
& \langle\mathcal{E}_{\Dx}^{1,1,4}(\eta,q),\varphi\rangle= \, \sum_j\sum_{n=0}^{N-1} \int_{t^n}^{t^{n+1}}\int_{I_j} \frac\Dt2 \eta_{uu}(\km,\theta^{n+\frac12}_j) (D^{t}_{+}u^n_j)^2\,\varphi  \, dx dt.
\end{align*}
Our aim is to estimate each of the above terms suitably. In order to do so, we introduce the notation $\Phi_j :=\int_{\xm}^{\xp} \varphi(x,t)\, dx$.

\begin{lemma}
\label{lem:compact3}
\begin{align}
\label{eq:12}
\seq{\mathcal{E}^{1,1,1}_{\Dx}(\eta,q)}_{\Dx>0} \, \text{ is compact in} \, \,H^{-1}_{\loc}(\Pi_T).
\end{align}
\end{lemma}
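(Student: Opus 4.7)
The proof follows the same template as Lemmas~\ref{lem:compact1} and~\ref{lem:compact2}: decompose $\mathcal{E}^{1,1,1}_\Dx(\eta,q)$ into a piece that is bounded in $\CMloc(\Pi_T)$, picking up factors of $\Dm\kp$ or $\Dm\km$ controlled by $|k|_{BV}$, and a piece that is compact in $H^{-1}_{\loc}(\Pi_T)$, controlled by the a priori $\ell^2$ bound \eqref{eq:est_final1} on $\Dm u^n_j$; then invoke Lemma~\ref{lem:Murat}. The essential structural input, valid for both entropy pairs in \eqref{eq:entropies}, is the chain-rule identity $q_u(k,s) = \eta_u(k,s)\,f_u(k,s)$, which combined with the consistency $\hat f(u,u)=f(u)$ of the EO flux will produce the cancellation of first-order terms.

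Concretely, I would first apply the discrete Leibniz rule $\Dm(\kp\fhp) = \kp\,\Dm\fhp + \fhm\,\Dm\kp$ and split the entropy increment as $\Dm q(\km,u^n_j) = [q(\km,u^n_j) - q(\km,u^n_{j-1})]/\Dx + [q(\km,u^n_{j-1}) - q(k_{j-\frac{3}{2}},u^n_{j-1})]/\Dx$. The second summand and the $\eta_u(\km,u^n_j)\,\fhm\,\Dm\kp$ contribution, when tested against $\varphi$ and summed, are bounded by $C\,|k|_{BV}\,\norm{\varphi}_{L^\infty(\Pi_T)}$ thanks to the uniform bounds on $\eta_u$, $\fhm$, and $q_k$, so they lie in a bounded subset of $\CMloc(\Pi_T)$. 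What remains is the ``genuine consistency'' term $\kp\,\eta_u(\km,u^n_j)\,\Dm\fhp - [q(\km,u^n_j)-q(\km,u^n_{j-1})]/\Dx$.

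For this remaining term, use the EO splitting $\hat f(u,v)=f^+(u)+f^-(v)$ together with the mean value theorem to write $\Dm\fhp = f^{+\prime}(\xi^+_{j-\frac12})\,\Dm u^n_j + f^{-\prime}(\xi^-_{j+\frac12})\,\Dp u^n_j$, and Taylor-expand $q(\km,u^n_j)-q(\km,u^n_{j-1}) = q_u(\km,u^n_j)\,\Dx\,\Dm u^n_j + R^n_j$ with $|R^n_j|\le C\,\Dx^2\,(\Dm u^n_j)^2$. The chain-rule identity $q_u = \eta_u\,f_u = \eta_u\,k\,(f^{+\prime}+f^{-\prime})$ then allows the first-order pieces to be matched, up to replacing $\xi^\pm$ by $u^n_j$ (which costs another Taylor factor of order $|\Dm u^n_j|+|\Dp u^n_j|$) and replacing $\kp$ by $\km$ (which costs a factor $\Dm\kp$, already accounted for in the measure part). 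The net residual is pointwise of the form $C\,\bigl((\Dm u^n_j)^2 + (\Dp u^n_j)^2\bigr)$. Testing against $\varphi$ and applying Cauchy--Schwarz exactly as in the proof of Lemma~\ref{lem:compact2}, together with the a priori bound \eqref{eq:est_final1}, yields an estimate of the form $|\langle\text{residual},\varphi\rangle|\le C\,\Dx^{1/2}\,\norm{\varphi}_{H^1(\Pi_T)}$, placing the residual in a compact subset of $H^{-1}_{\loc}(\Pi_T)$. Lemma~\ref{lem:Murat} then concludes the proof.

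The main obstacle is the bookkeeping in the cancellation step: the upwinded structure of EO distributes $\Dm\fhp$ asymmetrically between $\Dm u^n_j$ and $\Dp u^n_j$ with distinct intermediate arguments in $f^{+\prime}$ and $f^{-\prime}$, whereas $q(\km,u^n_j)-q(\km,u^n_{j-1})$ produces only a $\Dm u^n_j$ contribution with its own intermediate argument in $q_u$. Matching these up so that what is left is truly quadratic in the undivided differences $\Dm u$, $\Dp u$, rather than merely linear, is where the algebra is delicate, and it relies crucially on the identity $q_u = \eta_u\,f_u$ being used at the common reference point $u^n_j$.
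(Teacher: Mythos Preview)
Your overall strategy and the initial splitting off of the $k$-variation pieces match the paper. The gap is in the treatment of the ``consistency'' remainder. After your Taylor expansions and the use of $q_u=\eta_u\,k\,(f^{+\prime}+f^{-\prime})$ at the reference point $u^n_j$, the leftover is \emph{not} pointwise quadratic: a term of order one,
\[
\eta_u(\km,u^n_j)\,\km\,f^{-\prime}(u^n_j)\,\bigl(\Dm u^n_j-\Dp u^n_j\bigr),
\]
survives, because the EO splitting of $\Dm\fhp$ produces an $f^{-\prime}\,\Dp u^n_j$ contribution with no counterpart in the Taylor expansion of $q(\km,u^n_j)-q(\km,u^n_{j-1})$, which involves only $\Dm u^n_j$. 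This cannot be absorbed into $C\bigl((\Dm u^n_j)^2+(\Dp u^n_j)^2\bigr)$. Separately, even the genuinely quadratic pieces tested against $\varphi$ only yield a $\CMloc$ bound $C\norm{\varphi}_{L^\infty}$ via \eqref{eq:est_final1}, not the $C\,\Dx^{1/2}\norm{\varphi}_{H^1}$ estimate you claim; the Cauchy--Schwarz step from Lemma~\ref{lem:compact2} worked only because of the factor $\varphi_{j-\frac12}(t)-\varphi(x,t)$, which is absent here.

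The paper avoids this by not expanding $\Dm\fhp$. For $(\eta_1,q_1)$ it uses the exact identity
\[
-\km\,\Dm\fhp+\frac{q_1(\km,u^n_j)-q_1(\km,u^n_{j-1})}{\Dx}=\km\,\Dm\bigl(f(u^n_j)-\fhp\bigr),
\]
and for $(\eta_2,q_2)$ the analogue $\km^2 f'(u^n_j)\,\Dm\bigl(f(u^n_j)-\fhp\bigr)+\text{(quadratic)}$ after a single Taylor step on $q_2$ only. One summation by parts then transfers $\Dm$ onto $\km\Phi_j$ (respectively $\km^2 f'(u^n_j)\Phi_j$); since $\abs{f(u^n_j)-\fhp}\le\norm{f'}_\infty\abs{u^n_{j+1}-u^n_j}$, the $\Dp\Phi_j$ contribution is $H^{-1}$-compact via \eqref{estimate-a} and \eqref{eq:est_final1}, while the $\Dp\km$ contribution lands in $\CMloc$. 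Your route can be repaired by performing an additional summation by parts on the surviving $(\Dm u^n_j-\Dp u^n_j)$ term, but as written the quadratic-residual claim is incorrect.
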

\begin{proof}
We rewrite $\mathcal{E}_\Dx^{1,1,1}(\eta,q)$ as
\begin{align*}
\langle\mathcal{E}_{\Dx}^{1,1,1}(\eta,q),\varphi\rangle&= \sum_{j} \sum_{n=0}^{N-1}  \int_{t^n}^{t^{n+1}}  \left[ -\eta_u(\km, u^n_j)\Dm(\kp \fhp)+ \Dm q(\km,u^n_j)\right] \, \Phi_j \, dt\\
&= \sum_{n=0}^{N-1} \int_{t^n}^{t^{n+1}} \sum_j \left. \Bigg[ -\eta_u(\km, u^n_j) \left(\km\Dm\fhp\, + \, \fhp \Dm \kp\right)\, \right. \\
&\hspace{6cm} \left. + \, \frac{q(\km,u^n_j)-q(k_{j-\frac32},u^n_{j-1})}{\Dx}\right]\, \Phi_j \,dt\\
&= \sum_{n=0}^{N-1} \int_{t^n}^{t^{n+1}} \sum_j   \left. \Bigg[ -\eta_u(\km, u^n_j) \,  \km \Dm \fhp \, \right. \\
&\hspace{6cm} \left. + \, \frac{q(\km,u^n_j)-q(k_{j-\frac12},u^n_{j-1})}{\Dx}\right] \ \Phi_j \ dt \\ 
&\quad + \sum_{n=0}^{N-1} \int_{t^n}^{t^{n+1}}\sum_j \left. \Bigg[ -\eta_u(\km, u^n_j)  \fhp \Dm\kp \, \right. \\
&\hspace{6cm} \left.+ \, \frac{q(\km, u^n_{j-1})- q(k_{j-\frac32}, u^n_{j-1})}{\Dx}\right] \Phi_j \, dt \\
&:= \langle\mathcal{E}_\Dx^{1,1,1,1}(\eta,q),\varphi\rangle +  \langle\mathcal{E}_\Dx^{1,1,1,2}(\eta,q), \varphi\rangle.
\end{align*}
Since $\eta'$ and $q'$ are bounded, making use the total variation bound for $k$, we conclude that
\begin{align*}
\abs{\langle\mathcal{E}_\Dx^{1,1,1,2}(\eta, q), \varphi\rangle} \leq C \, \abs{k}_{BV} \,\norm{\varphi}_{L^{\infty}(\Pi_T)}.
\end{align*}
Hence this implies
\begin{align}
\label{eq:13}
\seq{ \mathcal{E}_\Dx^{1,1,1,2}(\eta, q)}_{\Dx> 0} \in \mathcal{M}_{\loc}(\Pi_T).
\end{align}
Next we move on to estimate $\mathcal{E}_\Dx^{1,1,1,1}(\eta,q)$.
At this point we shall make use of specific structure of $\eta$ and $q$ given by \eqref{eq:entropies}.

{\bf Estimate for $(\eta, q):=(\eta_1,q_1)$:} First, we recall that
\begin{align*}
 \eta_1(u)=u-c, \quad q_1(k,u)=f(k,u)-f(k,c)=k(x)(f(u)-f(c)).
 \end{align*}
 Thus, we find that
 \begin{align*}
-\frac{\partial}{\partial u}\eta_1(\km, u^n_j) \, & \km \Dm \fhp \, + \, \frac{q_1(\km,u^n_j)-q_1(k_{j-\frac12},u^n_{j-1})}{\Dx}\\
&=\, - \, \km \, \frac{\fhp-\fhm}{\Dx} \, + \, \km \, \frac{f(u^n_j)-f(u^n_{j-1})}{\Dx}\\
&=\frac{\km}{\Dx} \, \left[ \left(f(u^n_j)-\fhp\right) \, -\, \left(f(u^n_{j-1})-\fhm\right)\right]
= \km\Dm\left(f(u^n_j)-\fhp\right).
 \end{align*}
We insert this in the expression of $\langle\mathcal{E}_\Dx^{1,1,1,1}(\eta,q),\varphi\rangle$.
Then, using summation by parts we obtain
\begin{align*}
\langle\mathcal{E}_\Dx^{1,1,1,1}(\eta_1,q_1),\varphi\rangle &=  \sum_{n=0}^{N-1} \int_{t^n}^{t^{n+1}} \sum_j \km\Dm\left(f(u^n_j)-\fhp\right) \, \Phi_j\, dt\\
&\qquad= - \sum_{n=0}^{N-1} \int_{t^n}^{t^{n+1}} \sum_j  \left(f(u^n_j)-\fhp\right)\, \Dp\left(\km \Phi_j\right) \, dt\\
&\qquad= -  \sum_{n=0}^{N-1} \int_{t^n}^{t^{n+1}} \sum_j  \left(f(u^n_j)-\fhp\right)\, \left[ \kp \, \Dp \Phi_j \, + \, \Phi_j \, \Dp\km\right] \, dt \\
&\qquad := \langle I_{\Dx}(\eta_1,q_1) ,\varphi\rangle \, +  \langle II_{\Dx}(\eta_1,q_1) ,\varphi\rangle.
\end{align*}
To proceed further, we first estimate the term $I_{\Dx}(\eta_1,q_1)$ as follows
\begin{align*}
\abs{\langle I_{\Dx}(\eta_1,q_1) ,\varphi\rangle} &=\abs{- \sum_{n=0}^{N-1} \int_{t^n}^{t^{n+1}} \sum_j  \left(f(u^n_j)-\fhp\right)\,\kp \, \Dp \Phi_j \, dt}\\
&\leq  \norm{k}_{\infty} \norm{f^\prime}_{\infty}\, \sum_{n=0}^{N-1} \int_{t^n}^{t^{n+1}} \sum_j \abs{u^n_j-u^n_{j+1}} \, \abs {\Dp \Phi_j} \, dt\\
&=\norm{k}_{\infty}\norm{f^\prime}_{\infty} \, \sum_{n=0}^{N-1} \int_{t^n}^{t^{n+1}} \sum_j \abs{u^n_j-u^n_{j-1}} \, \abs {\Dm \Phi_j} \, dt.
\end{align*}
Next, we estimate $\abs{\Dm \Phi_j}$ as follows
\begin{align*}
\Phi_j -\Phi_{j-1}&= \int_{\xm}^{\xp}\varphi \, dx \, - \, \int_{x_{j-\frac32}}^{\xm}\varphi \, dx
= \int_{\xm}^{\xp}\left( \varphi-\varphi_{j-\frac12}\right) \, dx \, - \, \int_{x_{j-\frac32}}^{\xm}\left( \varphi-\varphi_{j-\frac12}\right) \, dx\\
&= \int_{\xm}^{\xp} \int_{\xm}^x \varphi_x \, d\theta\,  dx \, - \, \int_{x_{j-\frac32}}^{\xm} \int_{\xm}^x \varphi_x \, d\theta\,  dx.
\end{align*}
Therefore,
\begin{align*}
\abs{\Phi_j -\Phi_{j-1}}& \leq \int_{\xm}^{\xp} \int_{\xm}^x \abs{\varphi_x }\, d\theta\,  dx \, +
 \, \int_{x_{j-\frac32}}^{\xm} \int_{\xm}^x \abs{\varphi_x} \, d\theta\,  dx\\
 &\leq 2 \, \Dx \int_{\xm}^{\xp} \abs{\varphi_x} \, dx \ \leq \,2\, (\Dx)^{\frac32} \left(\int_{\xm}^{\xp}\abs{\varphi_x}^2\  dx\right)^{1/2}.
\end{align*}
Thus, we conclude that
\begin{align}
\label{estimate-a}
\abs{\Dm \Phi_j} \leq 2 \, (\Dx)^{1/2}\, \left(\int_{\xm}^{\xp}\abs{\varphi_x}^2\  dx\right)^{1/2}
\end{align}
Therefore, applying Cauchy-Schwartz inequality and using the a priori estimate \eqref{eq:est_final1}, we obtain
\begin{align*}
\abs{\langle I_{\Dx}(\eta_1,q_1) ,\varphi\rangle} & \leq  2\, \norm{k}_{\infty}\norm{f^\prime}_{\infty}\, (\Dx)^{1/2} \, \left( \sum_{n=0}^N \Dt \Dx\norm{\Dm u^n}^2 \right)^{1/2} \, \norm{\varphi}_{H^1(\Pi_T)} \\
&\leq C(\Dx)^{1/2} \, \norm{\varphi}_{H^1(\Pi_T)}.
\end{align*}
Hence
\begin{align}
\label{eq:14}
\seq{ I_{\Dx}(\eta_1,q_1)}_{\Dx> 0} \, \text{ is compact in} \, \,H^{-1}_{\loc}(\Pi_T).
\end{align}
On the other hand, making use of the total variation bound for $k$, we have
\begin{align*}
\abs{\langle II_{\Dx} (\eta_1,q_1),\varphi\rangle}  \leq C \, \norm{\varphi}_{L^{\infty}(\Pi_T)} \,\abs{k}_{BV}.
\end{align*}
Therefore
\begin{align}
\label{eq:15}
\seq{ II_{\Dx}(\eta_1,q_1)}_{\Dx> 0} \in \mathcal{M}_{\loc}(\Pi_T).
\end{align}
Using \eqref{eq:14}, \eqref{eq:15} in tandem with Lemma~\ref{lem:Murat}, we get the compactness of  $\seq{ \mathcal{E}^{1,1,1,1}_{\Dx}(\eta,q)}_{\Dx>0}$ in $H^{-1}_{\loc}(\Pi_T)$, for $(\eta, q):=(\eta_1, q_1)$.

{\bf Estimate of $\mathcal{E}_{\Dx}^{1,1,1,1}$ for $(\eta, q):=(\eta_2, q_2)$:}  We recall that
 \begin{align*}
 \eta_2(k,u)=k\left(f(u)-f(c)\right), \quad q_2(k,u)=\int_c^u(kf'(\xi))^2d\xi,
 \end{align*}
hence,
\begin{align*}
\frac{\partial}{\partial u}\eta_2(k,u)=\, k \, f'(u), \quad  \frac{\partial}{\partial u}q_2(k,u)=k^2 f'(u)^2.
\end{align*}
This implies that
\begin{align*}
- \frac{\partial}{\partial u}\eta_2(\km, u^n_j)  \km & \Dm\fhp \, + \, \frac{1}{\Dx}\left[q_2(\km,u^n_j)-q_2(\km,u^n_{j-1})\right]\\
&= -\km^2\, f'(u^n_j) \, \Dm\fhp \, + \,   \frac{1}{\Dx}\left[q_2(\km,u^n_j)-q_2(\km,u^n_{j-1})\right].
\end{align*}
Again, by Taylor's Theorem
\begin{align*}
q_2(\km,&u^n_j)-q_2(\km,u^n_{j-1})\\
&= (u^n_j-u^n_{j-1})\, \frac{\partial}{\partial u} q_2(\km,u^n_j) \, +\frac12 \frac{\partial^2}{\partial u^2} q_2(\km,\theta^n_{j-\frac12}) \, (u^n_j-u^n_{j-1})^2 \\
&=\km^2 f'(u^n_j)^2\, (u^n_j-u^n_{j-1})\, +\frac12 \frac{\partial^2}{\partial u^2} q_2(\km,\theta^n_{j-\frac12}) \, (u^n_j-u^n_{j-1})^2 \\
&=\km^2 f'(u^n_j)\, (f(u^n_j)-f(u^n_{j-1}))\,  +\frac12 \frac{\partial^2}{\partial u^2} q_2(\km,\theta^n_{j-\frac12}) \, (u^n_j-u^n_{j-1})^2,
\end{align*}
where $\theta^n_{j-\frac12}$ lies between $u^n_j$ and $u^n_{j-1}$. Thus
\begin{align*}
- \frac{\partial}{\partial u}\eta_2(\km, u^n_j) \km & \Dm\fhp \, + \, \frac{1}{\Dx}\left[q_2(\km,u^n_j)-q_2(\km,u^n_{j-1})\right]\\
&=\km^2 f'(u^n_j)\, \Dm(f(u^n_j)-\fhp)\, + \frac{1}{2\Dx} \frac{\partial^2}{\partial u^2} q_2(\km,\theta^n_{j-\frac12}) \, (u^n_j-u^n_{j-1})^2.
\end{align*}
Therefore
\begin{align*}
\langle\mathcal{E}_{\Dx}^{1,1,1,1}(\eta_2,q_2),\varphi\rangle&= \underbrace{\sum_{n=0}^{N-1} \int_{t^n}^{t^{n+1}}\sum_j \left(\km^2 f'(u^n_j)\, \Dm(f(u^n_j)-\fhp) \right)\Phi_j \, dt}_{\langle \mathcal{Q}^1_{\Dx}(\eta_2,q_2),\varphi\rangle}\\
&\quad\qquad \qquad +\underbrace{ \sum_{n=0}^{N-1} \int_{t^n}^{t^{n+1}}\sum_j  \frac{1}{2\Dx} \frac{\partial^2}{\partial u^2} q_2(\km,\theta^n_{j-\frac12}) \, (u^n_j-u^n_{j-1})^2 \ \Phi_j\, dt}_{\langle \mathcal{Q}^2_{\Dx}(\eta_2,q_2),\varphi\rangle}.
\end{align*}
Making use of the a priori estimate\eqref{eq:est_final1}, we conclude that 
\begin{align*}
{\langle \mathcal{Q}^2_{\Dx}(\eta_2,q_2),\varphi\rangle}:=  \sum_{n=0}^{N-1} \int_{t^n}^{t^{n+1}}\sum_j  \frac{1}{2\Dx} \frac{\partial^2}{\partial u^2} q_2(\km,\theta^n_{j-\frac12}) \, (u^n_j-u^n_{j-1})^2 \ \Phi_j\, dt \leq C \ \norm{\varphi}_{\infty}.
\end{align*}
Hence
\begin{align}
\label{eq:16}
\seq{ \mathcal{Q}^2_{\Dx}(\eta_2,q_2)}_{\Dx> 0} \in \mathcal{M}_{\loc}(\Pi_T).
\end{align}
On the other hand, using summation by parts, we can estimate the other term as follows
\begin{align*}
{\langle \mathcal{Q}^1_{\Dx}(\eta_2,q_2),\varphi\rangle}:=\sum_{n=0}^{N-1}& \int_{t^n}^{t^{n+1}}\sum_j  \left(\km^2 f'(u^n_j)\, \Dm(f(u^n_j)-\fhp) \right)\Phi_j \, dt\\
& =-\sum_{n=0}^{N-1} \int_{t^n}^{t^{n+1}}\sum_j \Dp\left(\km^2 f'(u^n_j)\ \Phi_j \right)\, (f(u^n_j)-\fhp) \, dt\\
&\quad=-\underbrace{\sum_{n=0}^{N-1} \int_{t^n}^{t^{n+1}}\sum_j(f(u^n_j)-\fhp) \, (\Dp \km^2) \, f'(u^n_{j+1})\ \Phi_{j+1}\, dt}_{\langle\mathcal{X}_{\Dx}(\eta_2,q_2),\varphi\rangle}\\
&\qquad -\underbrace{\sum_{n=0}^{N-1} \int_{t^n}^{t^{n+1}}\sum_j(f(u^n_j)-\fhp) \, \km^2 \ (\Dp f'(u^n_{j}) ) \, \Phi_{j+1}\, dt}_{\langle\mathcal{Y}_{\Dx}(\eta_2,q_2),\varphi\rangle}\\
&\qquad \quad-\underbrace{\sum_{n=0}^{N-1} \int_{t^n}^{t^{n+1}}\sum_j(f(u^n_j)-\fhp) \, \km^2 \  f'(u^n_{j}) )\,\Dm( \Phi_{j})\, dt}_{\langle\mathcal{Z}_{\Dx}(\eta_2,q_2),\varphi\rangle}.
\end{align*}
Using essentially the same type of arguments as before, we can deal with the above terms and prove the following estimates:
\begin{align*}
& \abs{{\langle\mathcal{X}_{\Dx}(\eta_2,q_2),\varphi\rangle}}\leq C \, \norm{\varphi}_{\infty}\, \abs{k}_{BV}\, \norm{k}_{\infty},\\
& \abs{{\langle\mathcal{Y}_{\Dx}(\eta_2,q_2),\varphi\rangle}}\leq C \, \norm{\varphi}_{\infty}\,  \norm{k}_{\infty},\\
& \abs{{\langle\mathcal{Z}_{\Dx}(\eta_2,q_2),\varphi\rangle}}\leq C \, (\Dx)^{1/2}\,\norm{\varphi}_{H^1}\, \norm{k}_{\infty},
\end{align*}
with the help of \eqref{estimate-a}, and the a priori estimates \eqref{eq:est_final1}, and \eqref{eq:est_final2}. This implies that
\begin{align}
\label{eq:17}
\seq{ \mathcal{X}_{\Dx}(\eta_2,q_2)}_{\Dx> 0}, \seq{ \mathcal{Y}_{\Dx}(\eta_2,q_2)}_{\Dx> 0} \in \mathcal{M}_{\loc}(\Pi_T),
\end{align}
and 
\begin{align}
\label{eq:18}
\seq{ \mathcal{Z}_{\Dx}(\eta_2,q_2)}_{\Dx> 0} \, \text{ is compact in} \, \,H^{-1}_{\loc}(\Pi_T).
\end{align}
We use Lemma~\ref{lem:Murat} along with \eqref{eq:16}, \eqref{eq:17} and \eqref{eq:18} to conclude the compactness of  $\seq{ \mathcal{E}^{1,1,1,1}_{\Dx}(\eta,q)}_{\Dx>0}$ in $H^{-1}_{\loc}(\Pi_T)$, for $(\eta, q):=(\eta_2, q_2)$ as well.

Finally, the compactness of  $\seq{ \mathcal{E}^{1,1,1,1}_{\Dx}(\eta,q)}_{\Dx>0}$ along with \eqref{eq:13} ensures the compactness of  $\seq{ \mathcal{E}^{1,1,1}_{\Dx}(\eta,q)}_{\Dx>0}$ in $H^{-1}_{\loc}(\Pi_T)$.
\end{proof}

Next, we tackle the term $\mathcal{E}^{1,1,2}_{\Dx}(\eta,q)$.
\begin{lemma}\label{lem:compact4}
\begin{align}
\label{eq:19}
\seq{\mathcal{E}^{1,1,2}_{\Dx}(\eta,q)}_{\Dx>0} \, \text{ is compact in} \, \,H^{-1}_{\loc}(\Pi_T).
\end{align}
\end{lemma}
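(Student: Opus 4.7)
The plan is to apply summation by parts in $j$ to the second-order difference $\Dp\Dm u^n_j$ and rearrange matters so that all the ``diffusion smallness'' is encoded as the prefactor $\Dx$ together with one (and only one) surviving factor of $\Dm u^n_j$, which we can then control by the a priori bound \eqref{eq:est_final1}. Since $\eta_u(\km, u^n_j)$ and $\Dp\Dm u^n_j$ are piecewise constant on $I^n_j$, I first write
\begin{align*}
\langle\mathcal{E}^{1,1,2}_{\Dx}(\eta,q),\varphi\rangle = \beta \Dx \sum_{n=0}^{N-1} \int_{t^n}^{t^{n+1}} \sum_j \eta_u(\km,u^n_j)\,(\Dp\Dm u^n_j)\,\Phi_j(t)\, dt,
\end{align*}
and use the SBP identity $\sum_j g_j \Dp h_j = -\sum_j h_j \Dm g_j$ (valid by compact support) followed by the discrete Leibniz rule $\Dm(a_j b_j) = a_j \Dm b_j + b_{j-1}\Dm a_j$ to obtain the decomposition $\mathcal{E}^{1,1,2}_{\Dx} = A_{\Dx} + B_{\Dx}$, with
\begin{align*}
A_{\Dx} &= -\beta \Dx \sum_n \int_{t^n}^{t^{n+1}} \sum_j \Dm u^n_j \,\eta_u(\km,u^n_j)\,\Dm\Phi_j\, dt,\\
B_{\Dx} &= -\beta \Dx \sum_n \int_{t^n}^{t^{n+1}} \sum_j \Dm u^n_j \,\Phi_{j-1}\,\Dm\bigl[\eta_u(\km,u^n_j)\bigr]\, dt.
\end{align*}

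For $A_{\Dx}$ I plan to use the same bound $\abs{\Dm\Phi_j}\leq 2(\Dx)^{1/2}(\int_{I_j}\abs{\varphi_x}^2\,dx)^{1/2}$ established in \eqref{estimate-a}, followed by Cauchy--Schwartz in $j$ and $t$. Since $\eta_u$ is uniformly bounded, the resulting estimate chain gives $\abs{A_{\Dx}} \le C\Dx \cdot (\Dx)^{1/2}\cdot(\Dx)^{-1/2}\bigl(\Dt\sum_n\|\Dm u^n\|^2\bigr)^{1/2}\|\varphi\|_{H^1(\Pi_T)}$; the inner square root is $\le C(\Dx)^{-1/2}$ by \eqref{eq:est_final1}, so the total collapses to $\abs{A_{\Dx}}\le C(\Dx)^{1/2}\|\varphi\|_{H^1(\Pi_T)}$. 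Hence $\seq{A_{\Dx}}_{\Dx>0}$ converges to zero in $H^{-1}_{\loc}(\Pi_T)$ and is in particular compact there.

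For $B_{\Dx}$ I split on the two entropy pairs. If $(\eta,q)=(\eta_1,q_1)$, then $\eta_u\equiv 1$ and $B_{\Dx}=0$, so nothing is left to do. If $(\eta,q)=(\eta_2,q_2)$, then $\eta_u(\km,u^n_j)=\km f'(u^n_j)$ and the Leibniz rule together with a Taylor expansion of $f'$ yields $\Dm[\km f'(u^n_j)] = \km f''(\xi^n_{j-\frac12})\Dm u^n_j + f'(u^n_{j-1})\Dm \km$, splitting $B_{\Dx}$ into $B^{(1)}_{\Dx}$ and $B^{(2)}_{\Dx}$. Using $\abs{\Phi_{j-1}}\leq \Dx\norm{\varphi}_{L^\infty}$, the first is estimated by
\begin{align*}
\abs{B^{(1)}_{\Dx}} \le C\norm{\varphi}_{L^\infty}\,\Dx^2\Dt\sum_{n,j}(\Dm u^n_j)^2 \le C\norm{\varphi}_{L^\infty},
\end{align*}
directly by \eqref{eq:est_final1}. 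For the second, the identity $\Dx^2\,\abs{\Dm u^n_j}\,\abs{\Dm\km} = \abs{u^n_j-u^n_{j-1}}\,\abs{\km-k_{j-\frac32}}$ together with the $L^\infty$ bound on $u$ and the summable BV increments of $k$ gives $\abs{B^{(2)}_{\Dx}} \le CT\,\norm{\varphi}_{L^\infty}\,\norm{u}_{L^\infty}\abs{k}_{BV}$. So both pieces are bounded in $\mathcal{M}_{\loc}(\Pi_T)$.

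Combining the two contributions by Lemma~\ref{lem:Murat} (the ambient $W^{-1,\infty}$-boundedness needed as a hypothesis is immediate from the uniform $L^\infty$ bound on $u_{\Dx}$ in \ref{def:b1}, since $\eta_u$ and the flux quantities in the scheme are bounded) yields the claim. The main point to get right is the accounting in the $A_{\Dx}$ estimate: the three $\sqrt{\Dx}$-size factors (prefactor $\Dx$, $\Dm\Phi_j$, inverse $\sqrt{\Dx}$ from $\ell^2$-normalisation) must be combined with the $C/\Dx$-scaling in \eqref{eq:est_final1} so that precisely one half power of $\Dx$ survives in the final $H^{-1}$ estimate; misplacing any factor produces either divergence or a suboptimal bound that still shows only $\mathcal{M}_{\loc}$ boundedness rather than $H^{-1}_{\loc}$ compactness.
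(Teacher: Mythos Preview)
Your proof is correct and follows essentially the same approach as the paper: summation by parts to transfer $\Dp$ onto $\eta_u(\km,u^n_j)\Phi_j$, the Leibniz split into an $H^{-1}$-small piece (your $A_{\Dx}$, controlled via \eqref{estimate-a} and \eqref{eq:est_final1}) and an $\mathcal{M}_{\loc}$-bounded piece (your $B_{\Dx}$), then Murat's lemma. The only cosmetic differences are that the paper treats $\Dm[\eta_u(\km,u^n_j)]$ generically as a two-term telescope in $u$ and in $k$ (without splitting on the entropy pair), and bounds the $k$-variation part by Cauchy--Schwarz plus \eqref{eq:est_final1} rather than by the direct $L^\infty$-times-$BV$ argument you use for $B^{(2)}_{\Dx}$; both routes give the same $\mathcal{M}_{\loc}$ bound.
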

\begin{proof}
We first recall the expression for $\langle\mathcal{E}_{\Dx}^{1,1,2}(\eta,q),\varphi\rangle$
\begin{align*}
& \langle\mathcal{E}_{\Dx}^{1,1,2}(\eta,q),\varphi\rangle= \, \beta\, \sum_j \sum_{n=0}^{N-1}\int_{t^n}^{t^{n+1}}\int_{I_j} \eta_u(\km,u^n_j) \Dx \, (\Dp \Dm u^n_j) \, \,\varphi  \, dx dt.
\end{align*}
Using summation-by-parts formula, we can write
\begin {align*}
\langle\mathcal{E}_{\Dx}^{1,1,2}(\eta,q),\varphi\rangle&=- \underbrace{\beta\,\sum_{n=0}^{N-1} \int_{t^n}^{t^{n+1}}\sum_j\Dx\, \Dm u^n_j\  \Dm\left(\frac{\partial}{\partial u}\eta(\km, u^n_j)\ \right) \Phi_{j-1}\, dt}_{\langle\mathcal{E}_\Dx^{1,1,2,1}(\eta,q),\varphi\rangle}\\
&\hspace{3.8cm} -\underbrace{\beta\,\sum_{n=0}^{N-1} \int_{t^n}^{t^{n+1}}\sum_j \Dx\, \Dm u^n_j\ \frac{\partial}{\partial u}\eta(\km, u^n_j) \ \Dm \Phi_j\ dt}_{\langle\mathcal{E}_\Dx^{1,1,2,2}(\eta,q),\varphi\rangle}.
\end{align*}
Now we write $\mathcal{E}_{\Dx}^{1,1,2,1}(\eta,q)$ as 
\begin{align*}
&\langle\mathcal{E}_{\Dx}^{1,1,2,1}(\eta,q),\varphi\rangle= - \underbrace{\beta\,\sum_{n=0}^{N-1} \int_{t^n}^{t^{n+1}} \Dx \sum_j  \Dm u^n_j \frac{\eta_u(\km, u^n_j)- \eta_u(\km, u^n_{j-1})}{\Dx} \ \Phi_{j-1} \ dt}_{\langle \mathcal{A}_{\Dx}(\eta,q), \varphi \rangle}\\
&\hspace{3.5cm}-\underbrace{\beta\,\sum_{n=0}^{N-1} \int_{t^n}^{t^{n+1}}\sum_j \Dx\, \Dm u^n_j \frac{\eta_u(\km, u^n_{j-1})- \eta_u(k_{j-\frac32}, u^n_{j-1})}{\Dx} \ \Phi_{j-1} \ dt}_{\langle \mathcal{B}_{\Dx}(\eta,q), \varphi \rangle}.
\end{align*}
We estimate the terms $\mathcal{A}_{\Dx}(\eta,q)$ and $\mathcal{B}_{\Dx}(\eta,q)$, using a priori estimate \eqref{eq:est_final1}, as
\begin{align*}
&\abs{\langle \mathcal{B}_{\Dx}(\eta,q), \varphi \rangle} \leq \beta\,\norm{\varphi}_{\infty}\sum_{n=0}^{N-1} \int_{t^n}^{t^{n+1}}\sum_j  \Dx \, \abs{\Dm u^n_j}\, \abs{\km-k_{j-\frac32}}\, dt\\
& \qquad \qquad \leq \beta\,\norm{\varphi}_{\infty} \  T^{1/2}\ \left(\sum_{n=0}^{N-1} \int_{t^n}^{t^{n+1}} \sum_j (\Dx)^2 \ (\Dm u^n_j)^2\ dt \right)^{1/2} \left(\sum_j\abs{\km-k_{j-\frac32}}^2\right)^{1/2}\\
&\qquad \qquad \leq \ C\, \beta\,\norm{\varphi}_{\infty} \ \abs{k}_{BV}.
\end{align*}
and, similarly
\begin{align*}
\abs{\langle \mathcal{A}_{\Dx}(\eta,q), \varphi \rangle} \leq \beta\,\norm{\varphi}_{\infty} \ \Dx\sum_{n=0}^{N-1} \int_{t^n}^{t^{n+1}}\norm{\Dm u^n}^2 \, dt \leq \beta\,C\ \norm{\varphi}_{\infty}.
\end{align*}
Hence
\begin{align}
\label{eq:20}
\seq{\mathcal{A}_{\Dx}(\eta,q)}_{\Dx> 0}, \seq{\mathcal{B}_{\Dx}(\eta,q)}_{\Dx> 0}   \in \mathcal{M}_{\loc}(\Pi_T) \quad \implies \quad \seq{\mathcal{E}_{\Dx}^{1,1,2,1}(\eta,q)}_{\Dx> 0}   \in \mathcal{M}_{\loc}(\Pi_T).
\end{align}
Next consider $\mathcal{E}_{\Dx}^{1,1,2,2}(\eta,q)$. Using Cauchy-schwartz inequality along with the a priori estimate \eqref{eq:est_final1} and the estimate  \eqref{estimate-a}, we obtain
\begin{align*}
\langle\mathcal{E}_{\Dx}^{1,1,2,2}(\eta,q), \varphi\rangle&=-\beta\,\sum_{n=0}^{N-1} \int_{t^n}^{t^{n+1}}\sum_j \Dx\, \Dm u^n_j\ \frac{\partial}{\partial u}\eta(\km, u^n_j) \ \Dm \Phi_j\ dt\\
&\leq \beta\, \ C \sum_{n=0}^{N-1} \int_{t^n}^{t^{n+1}}\sum_j\Dx \abs{\Dm u^n_j}\ \abs{\Dm \Phi_j} \ dt\\
&\leq \beta\, C \ (\Dx)^{1/2}\ \left( \Dx\sum_{n=0}^{N-1} \int_{t^n}^{t^{n+1}}\norm{\Dm u^n}^2 \, dt\right)^{1/2} \ \norm{\varphi}_{H^1(\Pi_T)}\\
&\leq \beta\,C\,(\Dx)^{1/2}\, \norm{\varphi}_{H^1(\Pi_T)}.
\end{align*}
Therefore,
\begin{align}
\label{eq:21}
\seq{\mathcal{E}_{\Dx}^{1,1,2,2}(\eta,q)}_{\Dx> 0} \, \text{ is compact in} \, \,H^{-1}_{\loc}(\Pi_T).
\end{align}
Thus, \eqref{eq:20}, and \eqref{eq:21} ensure the compactness of $\seq{\mathcal{E}_{\Dx}^{1,1,2}(\eta,q)}_{\Dx> 0}$.
\end{proof}

Next, we focus on $\mathcal{E}^{1,1,3}_{\Dx}(\eta,q)$.
\begin{lemma}\label{lem:compact5}
\begin{align}
\label{eq:22}
\seq{\mathcal{E}^{1,1,3}_{\Dx}(\eta,q)}_{\Dx>0} \, \text{ is compact in} \, \,H^{-1}_{\loc}(\Pi_T).
\end{align}
\end{lemma}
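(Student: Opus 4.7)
The plan is to mirror the template used for Lemmas~\ref{lem:compact3} and~\ref{lem:compact4}: first shift the three discrete derivatives off $u^n_j$ via a spatial summation by parts, next expand the resulting expression by the discrete Leibnitz rule and Taylor's theorem into a handful of pieces, bound each piece either in $H^{-1}_{\loc}(\Pi_T)$ with a vanishing prefactor or in $\mathcal{M}_{\loc}(\Pi_T)$ with a bound involving only $\norm{\varphi}_{L^{\infty}(\Pi_T)}$, and finally invoke Lemma~\ref{lem:Murat} to conclude.

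Concretely, since the discrete difference operators in space and in time commute, I would first rewrite $\Dpt\Dp\Dm u^n_j = \Dp(\Dpt\Dm u^n_j)$; a single spatial summation by parts then gives
\[
\langle\mathcal{E}^{1,1,3}_{\Dx}(\eta,q),\varphi\rangle
= -\gamma\mu(\Dx)\sum_{n=0}^{N-1}\int_{t^n}^{t^{n+1}}\sum_j (\Dpt\Dm u^n_j)\,\Dm\!\bigl[\eta_u(\km,u^n_j)\,\Phi_j(t)\bigr]\,dt,
\]
with $\Phi_j(t):=\int_{\xm}^{\xp}\varphi(x,t)\,dx$. Applying the discrete Leibnitz rule to the bracket, and then Taylor--expanding $\Dm\eta_u(\km,u^n_j)$ separately in its $u$ and $k$ arguments, produces three pieces whose summands read, schematically,
\[
(\Dpt\Dm u^n_j)\,\eta_u\,\Dm\Phi_j,\qquad (\Dpt\Dm u^n_j)\,\Phi_{j-1}\,\eta_{uu}\,\Dm u^n_j,\qquad (\Dpt\Dm u^n_j)\,\Phi_{j-1}\,\eta_{uk}\,\Dm\km.
\]

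For the first piece, the bound \eqref{estimate-a} together with Cauchy--Schwarz and the a~priori estimate \eqref{eq:est_final3} would give a control of the form $C\sqrt{\mu(\Dx)}\,\norm{\varphi}_{H^1(\Pi_T)}$, placing it in a compact subset of $H^{-1}_{\loc}(\Pi_T)$. For the second piece, using $|\Phi_{j-1}|\le\Dx\,\norm{\varphi}_{L^{\infty}(\Pi_T)}$ and applying Cauchy--Schwarz to \eqref{eq:est_final1} and \eqref{eq:est_final3} yields a bound of order $(\sqrt{\mu(\Dx)}/\Dx)\,\norm{\varphi}_{L^{\infty}(\Pi_T)}$; under the scaling $\mu(\Dx)=\mathcal{O}(\Dx^2)$ this is uniform in $\Dx$, so the piece sits in a bounded subset of $\mathcal{M}_{\loc}(\Pi_T)$. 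The third piece is handled in the same way, substituting $\sum_j(\Dm\km)^2\le C/\Dx^2$ (a consequence of $|k|_{BV}<\infty$) for the role played by \eqref{eq:est_final1}, and again lies in $\mathcal{M}_{\loc}(\Pi_T)$. Lemma~\ref{lem:Murat} then upgrades this decomposition to the required compactness of $\seq{\mathcal{E}^{1,1,3}_{\Dx}(\eta,q)}_{\Dx>0}$ in $H^{-1}_{\loc}(\Pi_T)$.

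The delicate point will be the precise balance between the prefactor $\mu(\Dx)$ appearing in $\mathcal{E}^{1,1,3}_{\Dx}$ and the $(\Dx^2\mu(\Dx))^{-1}$ growth of $\sum(\Dpt\Dm u^n_j)^2$ dictated by \eqref{eq:est_final3}: after Cauchy--Schwarz only a factor $\sqrt{\mu(\Dx)}$ survives, and the scaling $\mu(\Dx)=\mathcal{O}(\Dx^2)$ from Remark~\ref{rem:mu} is exactly what prevents the two $\mathcal{M}_{\loc}$ pieces from diverging. In the stronger regime $\mu(\Dx)=\scalebox{1.1}{$\scriptstyle\mathcal{O}$}(\Dx^2)$ relevant for convergence towards the entropy solution, all three pieces additionally acquire a vanishing factor, which is consistent with the stronger compactness required in that setting.
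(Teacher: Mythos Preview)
Your proposal is correct and follows essentially the same route as the paper: a spatial summation by parts, the discrete Leibnitz rule on $\Dm[\eta_u\,\Phi_j]$, and an add--subtract decomposition of $\Dm\eta_u(\km,u^n_j)$ into a $u$--difference piece and a $k$--difference piece, with each of the three resulting terms bounded exactly as you describe via \eqref{estimate-a}, \eqref{eq:est_final1}, \eqref{eq:est_final3}, and Lemma~\ref{lem:Murat}. One small correction: the prefactor for the first ($\Dm\Phi_j$) piece that Cauchy--Schwarz and \eqref{eq:est_final3} actually produce is $C\sqrt{\mu(\Dx)/\Dx}$, not $C\sqrt{\mu(\Dx)}$; under $\mu(\Dx)=\mathcal{O}(\Dx^2)$ this is $O(\Dx^{1/2})$, so the conclusion is unchanged.
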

\begin{proof}
Recall that
\begin{align*}
& \langle\mathcal{E}_{\Dx}^{1,1,3}(\eta,q),\varphi\rangle = \,\gamma\, \sum_j\sum_{n=0}^{N-1} \int_{t^n}^{t^{n+1}}\int_{I_j} \eta_u(\km,u^n_j) \mu(\Dx) \, (\Dpt\Dp \Dm u^n_j) \, \,\varphi  \, dx dt.
\end{align*}
Making use of the summation-by-parts formula and discrete Libnitz rule, we rewrite
\begin {align*}
\langle\mathcal{E}_{\Dx}^{1,1,3}(\eta,q),\varphi\rangle &=- \underbrace{\gamma\,\sum_{n=0}^{N-1} \int_{t^n}^{t^{n+1}}\sum_j
\mu(\Dx)\, \Dpt\Dm u^n_j\  \Dm\left(\frac{\partial}{\partial u}\eta(\km, u^n_j)\ \right) \Phi_{j-1}\, dt}_{\langle \mathcal{C}_{\Dx}(\eta,q), \varphi \rangle}\\
&\qquad  \qquad \qquad \quad -\underbrace{\gamma\,\sum_{n=0}^{N-1} \int_{t^n}^{t^{n+1}}\sum_j \mu(\Dx)\, \Dpt\Dm (u^n_j)\ \frac{\partial}{\partial u}\eta(\km, u^n_j) \ \Dm \Phi_j\ dt}_{\langle \mathcal{D}_{\Dx}(\eta,q), \varphi \rangle}.
\end{align*}
Again, making use of the a priori estimate \eqref{eq:est_final3} reveals that the term
$\mathcal{D}_{\Dx}(\eta,q)$ can be estimated as 
\begin{align*}
\abs{\langle \mathcal{D}_{\Dx}(\eta,q), \varphi \rangle} & \leq \gamma\,C \ (\Dx)^{\frac12} \ \left(\mu(\Dx) \Dx \Dt\sum_{n=0}^{N-1}\norm{\Dpt\Dm u^n}^2  \right)^{1/2} \norm{\varphi}_{H^1(\Pi_T)} 
 \leq \gamma\,C(\Dx)^{\frac12} \norm{\varphi}_{H^1(\Pi_T)},
\end{align*}
so that
\begin{align}
\label{eq:23}
\seq{\mathcal{D}_{\Dx}(\eta,q)}_{\Dx> 0} \, \text{ is compact in} \, \,H^{-1}_{\loc}(\Pi_T).
\end{align}
On the other hand, to estimate $\mathcal{C}_{\Dx}(\eta,q)$ term, we first split that term as
\begin{align*}
&\langle \mathcal{C}_{\Dx}(\eta,q), \varphi \rangle\\
&\qquad \qquad = - \underbrace{\gamma\,\sum_{n,j} \int_{t^n}^{t^{n+1}}\mu(\Dx)\, \Dpt\Dm u^n_j \frac{\eta_u(\km, u^n_j)- \eta_u(\km, u^n_{j-1})}{\Dx} \ \Phi_{j-1} \ dt}_{\langle \mathcal{G}_{\Dx}(\eta,q), \varphi \rangle}\\
&\qquad \qquad \qquad \qquad \qquad  -\underbrace{\gamma\,\sum_{n,j} \int_{t^n}^{t^{n+1}} \mu(\Dx)\, \Dpt\Dm u^n_j \frac{\eta_u(\km, u^n_{j-1})- \eta_u(k_{j-\frac32}, u^n_{j-1})}{\Dx} \ \Phi_{j-1} \ dt}_{\langle \mathcal{F}_{\Dx}(\eta,q), \varphi \rangle}.
\end{align*}
A similar type of argument, as used before, can be used to deal with the terms $\mathcal{G}_{\Dx}$ and $\mathcal{F}_{\Dx}$,
with the help of the a priori estimate \eqref{eq:est_final3}, returns
\begin{align*}
&\abs{\langle \mathcal{G}_{\Dx}(\eta,q), \varphi \rangle} \\
& \leq 2\,\gamma\,  \norm{\varphi}_{L^{\infty}(\Pi_T)} \left(\mu(\Dx)\sum_{n=0}^{N}\Dt \Dx \norm{\Dpt \Dm u^n}^2\right)^{1/2} 
\left(\sum_{n=0}^{N}\Dt \Dx \norm{\Dm u^n}^2\right)^{1/2}  
 \leq \gamma\,C \,  \norm{\varphi}_{L^{\infty}(\Pi_T)},
\end{align*}
and
\begin{align*}
\abs{\langle \mathcal{F}_{\Dx}(\eta,q), \varphi \rangle} & \leq \gamma\,\text{BV}(k)\, \norm{\varphi}_{L^{\infty}(\Pi_T)}\ \left(\mu(\Dx)\Dx \Dt\sum_{n=0}^{N-1}\norm{\Dpt\Dm u^n}^2  \right)^{1/2}
\leq \gamma\,C \,  \norm{\varphi}_{L^{\infty}(\Pi_T)}.
\end{align*}
This implies that
\begin{align}
\label{eq:24}
\seq{\mathcal{G}_{\Dx}(\eta,q)}_{\Dx> 0}, \seq{\mathcal{F}_{\Dx}(\eta,q)}_{\Dx> 0} \in \mathcal{M}_{\loc}(\Pi_T).
\end{align}
Therefore, in view of the Lemma~\ref{lem:Murat} along with \eqref{eq:23}, and \eqref{eq:24}, we reach at the conclusion
\begin{align}
\seq{\mathcal{E}_{\Dx}^{1,1,3}(\eta,q)}_{\Dx> 0} \, \text{ is compact in} \, \,H^{-1}_{\loc}(\Pi_T).
\end{align}
\end{proof}

Making use of Lemma~\ref{lem:compact3},\ref{lem:compact4} and \ref{lem:compact5}, we can finally prove the following.

\begin{lemma}\label{lem:compact6}
\begin{align}
\label{eq:25}
\seq{\mathcal{E}^{1,1}_{\Dx}(\eta,q)}_{\Dx>0} \, \text{ is compact in} \, \,H^{-1}_{\loc}(\Pi_T).
\end{align}
\end{lemma}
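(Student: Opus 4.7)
The plan is to bundle the compactness results of Lemmas \ref{lem:compact3}, \ref{lem:compact4}, \ref{lem:compact5} with a measure-type bound on the Taylor remainder $\mathcal{E}^{1,1,4}_{\Dx}$ and then invoke Murat's lemma to conclude. Recall from the decomposition preceding Lemma \ref{lem:compact3} that
\[
\mathcal{E}^{1,1}_{\Dx}(\eta,q) = \mathcal{E}^{1,1,1}_{\Dx}(\eta,q) + \mathcal{E}^{1,1,2}_{\Dx}(\eta,q) + \mathcal{E}^{1,1,3}_{\Dx}(\eta,q) + \mathcal{E}^{1,1,4}_{\Dx}(\eta,q),
\]
so I would set $\mathcal{L}_{1,\Dx} := \mathcal{E}^{1,1,1}_{\Dx}+\mathcal{E}^{1,1,2}_{\Dx}+\mathcal{E}^{1,1,3}_{\Dx}$ and $\mathcal{L}_{2,\Dx}:=\mathcal{E}^{1,1,4}_{\Dx}$. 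By the three preceding lemmas, $\mathcal{L}_{1,\Dx}$ already lies in a compact subset of $H^{-1}_{\loc}(\Pi_T)$.

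Next I would show that $\{\mathcal{E}^{1,1,4}_{\Dx}(\eta,q)\}_{\Dx>0}$ is bounded in $\CMloc(\Pi_T)$. Since $\eta \in C^2$ and $u_\Dx$ is uniformly bounded by Assumption \ref{def:b1}, the coefficient $\eta_{uu}(\km,\theta^{n+1/2}_j)$ is uniformly bounded, so
\[
\abs{\langle \mathcal{E}^{1,1,4}_{\Dx}(\eta,q),\varphi\rangle}
\le C\,\norm{\varphi}_{L^\infty(\Pi_T)}\,\Dt\,\Dx\,\Dt\sum_{j,n}(D^t_+ u^n_j)^2.
\]
Using estimate \eqref{eq:est_final2}, this is bounded by $C\norm{\varphi}_{L^\infty(\Pi_T)}\,\Dt/\Dx$, which in turn is controlled by a constant independent of $\Dx$ thanks to the CFL condition \eqref{eq:cfl}. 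Hence $\mathcal{L}_{2,\Dx} \in \CMloc(\Pi_T)$ uniformly.

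It remains to verify that $\mathcal{E}^{1,1}_{\Dx}(\eta,q)$ is bounded in $W^{-1,\infty}_{\loc}(\Pi_T)$. Going back to the defining identity
\[
\langle \mathcal{E}^{1,1}_{\Dx}(\eta,q),\varphi\rangle = \int_0^T\!\!\int_\R \eta(k_\Dx,u_\Dx)\varphi_t + q(k_\Dx,u_\Dx)\varphi_x\,dx\,dt + \text{boundary terms in } \varphi^0,
\]
(which follows after reversing the discrete summation-by-parts performed just before Lemma \ref{lem:compact2}), and recalling that $\eta,q$ are continuous in both arguments and hence uniformly bounded on $[\alpha,\beta]\times[a',b']$, one gets the estimate $\abs{\langle \mathcal{E}^{1,1}_{\Dx}(\eta,q),\varphi\rangle}\le C\norm{\varphi}_{W^{1,1}(\Pi_T)}$, which is exactly the $W^{-1,\infty}_{\loc}$ bound required.

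With these three ingredients in hand, Lemma \ref{lem:Murat} applied to $\mathcal{E}^{1,1}_{\Dx}=\mathcal{L}_{1,\Dx}+\mathcal{L}_{2,\Dx}$ yields compactness of $\{\mathcal{E}^{1,1}_{\Dx}(\eta,q)\}_{\Dx>0}$ in $H^{-1}_{\loc}(\Pi_T)$, completing the proof. I expect the only mildly delicate point to be the bookkeeping in the $\mathcal{E}^{1,1,4}_{\Dx}$ estimate, where the extra $\Dt$-factor present in the Taylor remainder (see \eqref{eta-expansion}) is essential: without it the sum would only be controlled by $1/\Dx^2$ and the measure bound would fail; it is precisely the interplay between this explicit $\Dt$ and the CFL-controlled ratio $\Dt/\Dx$ that makes the argument close.
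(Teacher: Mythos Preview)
Your proposal is correct and follows essentially the same route as the paper: use Lemmas~\ref{lem:compact3}--\ref{lem:compact5} for the first three pieces, bound $\mathcal{E}^{1,1,4}_{\Dx}$ in $\mathcal{M}_{\loc}(\Pi_T)$ via \eqref{eq:est_final2} and the CFL-controlled ratio $\Dt/\Dx$, then invoke Lemma~\ref{lem:Murat}. Your explicit check of the $W^{-1,\infty}_{\loc}$ hypothesis is a point the paper leaves implicit; just note that the identity you cite actually characterizes $\mathcal{E}^{1}_{\Dx}$ (see \eqref{eq:2}) rather than $\mathcal{E}^{1,1}_{\Dx}$, so the cleanest way to get the $W^{-1,\infty}$ bound for $\mathcal{E}^{1,1}_{\Dx}$ is to write $\mathcal{E}^{1,1}_{\Dx}=-\mathcal{E}^{1}_{\Dx}-\mathcal{E}^{1,2}_{\Dx}$ and observe that both summands are separately bounded in $W^{-1,\infty}_{\loc}$ by the uniform $L^\infty$ bound on $u_\Dx$.
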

\begin{proof}
We have already shown that $\seq{\mathcal{E}^{1,1,1}_{\Dx}(\eta,q)}_{\Dx>0}$, $\seq{\mathcal{E}^{1,1,2}_{\Dx}(\eta,q)}_{\Dx>0}$ and $\seq{\mathcal{E}^{1,1,3}_{\Dx}(\eta,q)}_{\Dx>0}$ are compact $H^{-1}(\Pi_T)$. Thus, we only need to find an estimate for $\mathcal{E}^{1,1,4}_{\Dx}(\eta,q)$. using the a priori bound \eqref{eq:est_final1}, we have
\begin{align*}
\abs{ \langle\mathcal{E}_{\Dx}^{1,1,4}(\eta,q),\varphi\rangle}\leq C \, \norm{\varphi}_{\infty}\left(\sum_{n=0}^N\Dt \Dx\norm{\Dpt u^n}^2\right)
\leq C \, \norm{\varphi}_{\infty},
\end{align*}
so that
\begin{align}
\label{eq:26}
\seq{\mathcal{E}_{\Dx}^{1,1,4}(\eta,q)}_{\Dx> 0} \in \mathcal{M}_{\loc}(\Pi_T).
\end{align}
An application of Lemma~\ref{lem:Murat} allows us to conclude \eqref{eq:25}, thus proving the lemma.
\end{proof}

Combining Lemma~\ref{lem:compact2} and \ref{lem:compact6} ensures that
\begin{align}
\label{eq:27}
\seq{\mathcal{E}_{\Dx}^{1}(\eta,q)}_{\Dx> 0}  \, \text{ is compact in} \, \,H^{-1}_{\loc}(\Pi_T).
\end{align}
Thus, Lemma~\ref{lem:compact1} and \eqref{eq:27} justify the assumptions we had made in the proof of Lemma~\ref{lem:compact}.

Now we are in a position to state the ``convergence theorem'' which guarantees the convergence of
approximate solutions ${\lbrace u_{\Dx} \rbrace}_{\Dx>0}$, generated by the scheme \eqref{eq:scheme}, to 
a weak solution of \eqref{eq:discont}. The following theorem can also be viewed as a modified version of the classical Lax-Wendroff theorem (for more details, consult the monograph by LeVeque \cite{lev}).
\begin{theorem}
\label{thm:theorem1}
Let $u_{\Dx}$ be a sequence of approximations generated via the scheme \eqref{eq:scheme} with $\mu(\Dx) = \bigO{\Dx^2}$.
Then there exists a function $u \in L^{\infty}([0,T];L^1_{\loc}(\R))$ and a subsequence of $\lbrace \Dx\rbrace$ (not relabeled) such that $u_{\Dx} \mapsto u$ as $\Dx \downarrow 0$. Moreover, the function $u$
is a weak solution to \eqref{eq:discont}.
\end{theorem}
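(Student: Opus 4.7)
\textbf{Proof proposal for Theorem~\ref{thm:theorem1}.} The plan is to combine the $H^{-1}_{\loc}$ compactness already established in Lemmas~\ref{lem:compact}--\ref{lem:compact6} with the compensated compactness result of Theorem~\ref{thm:compcomp} in order to extract a strongly convergent subsequence, and then pass to the limit in a weak formulation of the scheme. First I would invoke Theorem~\ref{thm:compcomp} with the entropy pairs $(\eta_i,q_i)$ defined in \eqref{eq:entropies}: the hypothesis \ref{def:b1} supplies the uniform $L^\infty$ bound, and Lemma~\ref{lem:compact} supplies the required $H^{-1}_{\loc}$ compactness. This yields a subsequence (still denoted $u_\Dx$) and a function $u \in L^\infty(\R\times[0,T])$ with $u_\Dx \to u$ a.e. on any bounded $\Pi_T \subset \R \times [0,T]$. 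Combined with the uniform $L^\infty$ bound and dominated convergence, one obtains $u_\Dx \to u$ in $L^p_{\loc}$ for every $p<\infty$, which in particular gives $u \in L^\infty([0,T];L^1_{\loc}(\R))$.

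Next, for an arbitrary $\varphi \in C_0^\infty(\R\times[0,T))$, set $\varphi^n_j:=\varphi(x_j,t^n)$, multiply the scheme \eqref{eq:scheme} by $\Dx\Dt\,\varphi^n_j$, and sum over $j\in\Z$, $n=0,\ldots,N-1$. Applying the discrete summation-by-parts formulas in both space and time, the identity reads
\begin{align*}
-\Dx\Dt\sum_{n,j} u^{n+1}_j D^t_-\varphi^n_j \;-\;\Dx\sum_{j} u^0_j \varphi^0_j
\;-\;\Dx\Dt\sum_{n,j} \hp^n \Dp\varphi^n_j
= \beta\,\Dx^2\Dt\sum_{n,j} u^n_j \Dp\Dm\varphi^n_j
+ \gamma\,\mu(\Dx)\,\Dx\Dt\sum_{n,j} u^n_j\, \Dpt\Dp\Dm\varphi^n_j,
\end{align*}
modulo boundary terms in $n$ that vanish because $\varphi$ has compact support in $[0,T)$. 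I would then pass to the limit in each of these terms separately.

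For the temporal term, piecewise-constant interpolation of $u_\Dx$ together with $L^1_{\loc}$ convergence gives $\Dx\Dt\sum_{n,j} u^{n+1}_j D^t_-\varphi^n_j \to \int_0^T\!\int_\R u\,\varphi_t\,dxdt$, while the initial-data sum converges to $\int_\R u_0(x)\varphi(x,0)\,dx$ thanks to \eqref{eq:scheme_initial}. For the flux term, write $\hp^n = k_{j+1/2}\hat f(u^n_j,u^n_{j+1})$ or its swapped variant, and note that $k_\Dx \to k$ in $L^1_{\loc}$, $u_\Dx$ and its spatial shift $u_\Dx(\cdot+\Dx,\cdot)$ both converge a.e. to $u$, and $\hat f$ is Lipschitz with $\hat f(u,u)=f(u)$. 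Dominated convergence then yields $\Dx\Dt\sum_{n,j} \hp^n \Dp\varphi^n_j \to \int_0^T\!\int_\R k(x)f(u)\,\varphi_x\,dxdt$. For the diffusive term, since $\Dp\Dm\varphi^n_j$ is uniformly bounded and $|u_\Dx|\le C$, the factor $\beta\Dx^2\Dt\sum_{n,j} u^n_j \Dp\Dm\varphi^n_j$ is bounded by $C\beta\Dx \to 0$. Similarly, the dispersive term is $\gamma\mu(\Dx)$ times a quantity bounded uniformly in $\Dx$ (the third-order discrete derivatives of $\varphi$ remain bounded), and $\mu(\Dx)=\bigO{\Dx^2}\to 0$ forces it to vanish.

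Collecting these limits produces exactly the weak formulation \eqref{eq:weak} for $u$, establishing that $u$ is a weak solution of \eqref{eq:discont}. The main obstacle in this program is the passage to the limit in the nonlinear flux term: the ordinary arguments for $x$-independent fluxes fail because of the jumps in $k$, so one must exploit both the strong a.e. convergence coming from compensated compactness and the staggered $L^1_{\loc}$ approximation $k_\Dx \to k$, using the Lipschitz character of the Engquist--Osher numerical flux \eqref{eq:Lip} to control the difference $\hat f(u^n_j,u^n_{j+1})-f(u^n_j)$ against $|u^n_{j+1}-u^n_j|$, which is absorbed by the BV-in-$k$ accounting from \eqref{eq:est_final1}. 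The vanishing of the diffusive and dispersive contributions, although mechanical, is where the scaling hypothesis $\mu(\Dx)=\bigO{\Dx^2}$ is genuinely used.
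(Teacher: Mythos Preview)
Your proposal is correct and follows the same two-step architecture as the paper: first invoke Lemma~\ref{lem:compact} together with Theorem~\ref{thm:compcomp} to extract an a.e.\ convergent subsequence, then perform a discrete Lax--Wendroff argument. The treatment of the time term, the initial data, and the flux term (splitting off $k_{j+1/2}-k(x)$ and $\hat f^n_{j+1/2}-f(u^n_j)$, the latter controlled via \eqref{eq:est_final1}) matches the paper's proof essentially line for line.

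The one genuine difference is in how you dispose of the diffusive and dispersive contributions. The paper performs a \emph{single} summation by parts in space and then uses the a~priori bounds \eqref{eq:est_final1} and \eqref{eq:est_final3} together with Cauchy--Schwarz, obtaining decay rates $(\Dx)^{1/2}$ and $\mu(\Dx)^{1/2}/(\Dx)^{1/2}$ respectively. You instead shift \emph{all} discrete derivatives onto the smooth test function and use only the uniform $L^\infty$ bound on $u_\Dx$, giving decay rates $\Dx$ and $\mu(\Dx)$. Your route is more elementary (it does not need \eqref{eq:est_final1} or \eqref{eq:est_final3} for these two terms) and in fact shows that any $\mu(\Dx)\to 0$ would suffice here; the paper's route, on the other hand, illustrates that the energy estimates of Lemma~\ref{lemma2} already carry enough information. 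One small slip: when you sum by parts in time on the dispersive term, the boundary contribution at $n=0$ does \emph{not} vanish from compact support of $\varphi$ (only the one at $n=N$ does); it produces an extra term $\gamma\mu(\Dx)\,\Dx\sum_j u^0_j\,\Dp\Dm\varphi^0_j$, which is nonetheless $\bigO{\mu(\Dx)}$ and therefore harmless.
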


\begin{proof}
The strong convergence of $u_{\Dx}$ to a function $u$ immediately follows from the 
Theorem~\ref{thm:compcomp} and the
Lemma~\ref{lem:compact}. It remains to show that $u$ is a weak solution. 
Let $\psi \in C_0^{\infty}(\R \times [0,T))$ be any test function and denote $\psi_j^n = \psi(x_j, t^n)$.
Multiplying the scheme \eqref{eq:scheme} by $\Dx \Dt \psi_j^n$, and subsequently suming over all $j$ and $n$ yields
\begin{align*}
&\Dx \Dt \sum_{j} \sum_{n} \psi_j^n \,D^t_{+} u^n_j  + \Dx \Dt \sum_{j} \sum_{n} \psi_j^n \,\Dm \left(  k_{j+\frac12} \hat{f}_{j+\frac12}\right) \\
&\qquad \qquad \qquad \qquad = \beta\, \Dx \Dt \sum_{j} \sum_{n}  \Dx \,\psi_j^n\,  \Dp\Dm u^n_j  +\gamma\, \Dx \Dt \sum_{j} \sum_{n} \mu(\Dx)\, \psi_j^n \,D^t_{+} \Dp \Dm u^n_j.  
\end{align*}
By standard arguments, it is clear that
\begin{align*}
\Dx \Dt \sum_{j} \sum_{n} \psi_j^n \,D^t_{+} u^n_j  
&= - \Dx \Dt \sum_{j} \sum_{n} D^t_{-} \psi_j^n \, u^n_j - \Dx  \sum_{j}  \psi_j^0 \, u^0_j \\
& \quad \mapsto -\int_{\R} \int_0^T u \psi_t \,dx\,dt - \int_{\R} \psi(x,0) u_0(x) \,dx, \, \text{as} \, \Dx \downarrow 0.
\end{align*}
Next, using the a priori bound \eqref{eq:est_final1}, we conclude
\begin{align*}
\beta\,\Dx \Dt \sum_{j} \sum_{n}  \Dx \,\psi_j^n\,  &\Dp\Dm u^n_j  =  -\beta\,\Dx \Dt \sum_{j} \sum_{n}  \Dx \,\Dm \psi_j^n\,  \Dm u^n_j  \\
& \le\beta\, \Dx \Dt \sum_{n} \left( \Dx \sum_{j} (\Dm u^n_j)^2\right)^{1/2} \,  \left(\Dx \sum_{j} (\Dm \psi^n_j)^2 \right)^{1/2} \\
& \le\beta\, \left( \Dx^2 \Dt \sum_{n}\sum_{j} (\Dm u^n_j)^2\right)^{1/2} \,  \left(\Dx^2 \Dt \sum_{n} \sum_{j} (\Dm \psi^n_j)^2 \right)^{1/2} \\
& \le \beta\,C\,(\Dx)^{1/2} \, \norm{\psi}_{L^2((0,T);H^1(\R))} \mapsto 0, \,\text{as}\, \Dx \downarrow 0.
\end{align*}
Repeated use of Cauchy-Schwartz inequality along with the help of a priori bound \eqref{eq:est_final3} returns 
\begin{align*}
\gamma\,\Dx \Dt \sum_{j} \sum_{n} & \mu(\Dx)\,  \psi_j^n \,D^t_{+} \Dp \Dm u^n_j  \\
&\le    \gamma\,C\, \frac{\mu(\Dx)^{1/2}}{\Dx^{1/2}}    \left(\sum_{j} \sum_{n} \mu(\Dx) \Dx^2 \Dt \abs{D^t_{+} \Dm u^n_j }^2 \right)^{1/2}     \left(\sum_{j} \sum_{n}  \Dx \Dt \abs{ \Dm \psi^n_j }^2 \right)^{1/2}\\
& \le \gamma\,C\, \frac{\mu(\Dx)^{1/2}}{\Dx^{1/2}} \, \norm{\psi}_{L^2((0,T);H^1(\R))} \mapsto 0, \,\text{as}\, \Dx \downarrow 0.
\end{align*}
Finally, a simple use of summation-by-parts formula implies
\begin{align*}
\Dx \Dt \sum_{j} \sum_{n} \psi_j^n & \, \Dm \left(  k_{j+\frac12} \hat{f}^n_{j+\frac12}\right) = - \Dx \Dt \sum_{j} \sum_{n} \Dp \psi_j^n \, k_{j+\frac12} \hat{f}^n_{j+\frac12} \\
& = - \underbrace{\Dt \sum_{j,n} \int_{I_j}\Dp \psi_j^n \, k(x) \, f(u_j) \,dx}_{\mathcal{E}^1_{\Dx}} - \underbrace{ \Dt \sum_{j,n}\int_{I_j}   \Dp \psi_j^n \, \left(k_{j+\frac12} -k(x)\right)\hat{f}^n_{j+\frac12}}_{\mathcal{E}^2_{\Dx}}\,dx  \\
& \qquad \qquad \qquad \qquad \qquad-\underbrace{ \Dt   \sum_{j} \sum_{n} \int_{I_j}\Dp \psi_j^n \, k(x) \left(\hat{f}^n_{j+\frac12} -f(u_j) \right)\,dx}_{\mathcal{E}^3_{\Dx}},
\end{align*}
where, again, a standard argument reveals that
\begin{align*}
\mathcal{E}^1_{\Dx} \mapsto -\int_{\R} \int_0^T k(x)\,f(u)\,\psi_x \,dx\,dt, \, \text{as}\, \Dx \downarrow 0.
\end{align*}
Now observe that
\begin{align*}
\sum_{j} \int_{I_j} \abs{k(x) -k_{j+\frac12}} \,dx  \le \int_{\R} \abs{k(x) -k_{\Dx}(x)}\,dx + \frac{\Dx}{2} \int_{\R} \frac {\abs{k(x+\Dx/2) -k(x)}}{\Dx/2} \,dx 
\end{align*}
and consequently
\begin{align*} 
\sum_{j} \int_{I_j} \abs{k(x) -k_{j+\frac12}} \,dx \mapsto 0 \, \text{as}\,\, \Dx \downarrow 0.
\end{align*}
Keeping this in mind, we find that
\begin{align*}
\mathcal{E}^2_{\Dx} &= \Dt   \sum_{j,n}\int_{I_j} \Dp \psi_j^n \, \left(k_{j+\frac12} -k(x)\right)\hat{f}^n_{j+\frac12}\,dx \\
& \le M  \norm{\psi}_{L^1([0,T];L^{\infty}(\R))} \sum_{j} \int_{I_j}\abs{k(x) -k_{j+\frac12}} \,dx\mapsto 0 \, \text{as}\,\, \Dx \downarrow 0.
\end{align*}
Finally, making use of the a priori bound \eqref{eq:est_final1}, the last term can be estimated as follows:
\begin{align*}
\mathcal{E}^3_{\Dx} &=\Dt   \sum_{j} \sum_{n} \int_{I_j}\Dp \psi_j^n \, k(x) \left(\hat{f}^n_{j+\frac12} -f(u_j) \right)\,dx \\
& \le M  (\Dx)^{1/2} \norm{\psi}_{L^2([0,T];H^1(\R))} \left(\Dx^2 \Dt \sum_{j} \sum_{n} (\Dm u^n_j)^2\right)^{1/2} \mapsto 0 \, \text{as}\,\, \Dx \downarrow 0.
\end{align*}
To sum up, we have proved that $u$ is a weak solution of \eqref{eq:discont}, i.e.,
\begin{align*}
\int_{\R} \int_0^T \left(  \psi_t u + \psi_x k(x) f(u) \right) \,dx\,dt + \int_{\R} \psi(x,0) u_0(x) \,dx =0,\, \text{for all}\,\, \psi \in C_0^{\infty} (\R \times [0,T)).
\end{align*}

\end{proof}

\section{Entropy Solution}
\label{sec:entropy}
As we have already mentioned in Remark~\ref{rem:EO},
we use the Engquist-Osher scheme to make the analysis more
concrete, but our methods can easily be adapted to general monotone schemes.
Drawing preliminary motivation from 
Karlsen et al. \cite{kenneth1} and Towers \cite{towers1}, we first show that the limit 
solution satisfies the Kru\v{z}kov entropy inequalities locally, 
away from the jumps in $k$. Then we proceed to show that the limit solution satisfies Kru\v{z}kov type entropy
inequalities when the test function has support which intersects one or more 
jumps in $k$, and finally we show that this implies Kru\v{z}kov entropy solution.

To proceed further, we need some additional regularity assumptions on the discontinuous coefficient $k(x)$.
We assume that $k(x)$ is piecewise Lipschitz continuous in $\R$ with
finitely many jumps (in $k$ and $k'$), located at $\xi_1, \xi_2, \cdots, \xi_M$. More specifically, we
assume that there are finitely many Lipschitz continuous curves $\omega_1, \omega_2, \cdots, \omega_M$ such that $\xi_i \in \omega_i$, the union of which we
denote by
\begin{align*}
\bigcup_{i=1}^M \omega_i =\Omega.
\end{align*}
For the sake of simplicity, we also assume that none of the curves intersect.
The curves $\omega_1, \omega_2, \cdots, \omega_M$ partition $\R \setminus \Omega$
into a finite union of open sets:
\begin{align*}
\R \setminus \Omega = \mathcal{R}_0 \cup \mathcal{R}_1 \cup \cdots \mathcal{R}_M,
\end{align*}
with the curve $\omega_m$ separating the sets $\mathcal{R}_{m-1}$ and $\mathcal{R}_m$.
We will assume that
\begin{align*}
k \in \mathrm{Lip}(\overline{\mathcal{R}}_m), \quad m=1,2,\cdots,M.
\end{align*}
With this assumption, $k$ has well defined limits from the right and left along each of the curves
$\omega_m$, $1\le m \le M$, and we denote these limits by $k_m^{\pm}:= k(\xi_m^{\pm})$ respectively.

To this end, we first establish the following entropy inequality for smooth entropy 
function $\eta$, to avoid complications arising from the discontinuity $\eta'(u) = \sgn{u-c}$ in Kru\v{z}kov entropy.

\begin{lemma}
\label{thm:entropy1}
Let $(\eta, Q)$ be a convex entropy pair with $\eta$ being a $C^2$-function and $\eta'(u) = Q'(u) \,f'(u)$.
For the test function $\psi \ge 0$ with compact
support in $t>0$, $x \in \R \setminus \Omega$, and every $c \in \R$, the following 
entropy inequality holds
\begin{align*}
\int_{\R} \int_0^T \Big(\eta(u) \,\psi_t + k \,Q(u)\, \psi_x \Big)\,dx\,dt + \int_{\R} & \eta(u_0) \psi(x,0)\,dx \\
& \quad- \int_{\R} \int_0^T  k'(x)\,\Big(\eta'(u) \,f(u) -  Q(u) \Big) \psi\,\,dx\,dt \ge 0.
\end{align*}
\end{lemma}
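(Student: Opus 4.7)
The plan is to multiply the scheme \eqref{eq:scheme} by $\Dx\Dt\,\eta'(u^n_j)\psi^n_j$, sum over $j$ and $n$, and pass to the limit. Since $\mathrm{supp}\,\psi\subset(0,T)\times(\R\setminus\Omega)$, the initial-data integral $\int\eta(u_0)\psi(x,0)\,dx$ vanishes trivially, and the resulting discrete identity has the form $A+B=C+D$ corresponding to the time-derivative, flux, diffusion, and dispersion terms. For $A$, Taylor expansion and convexity of $\eta$ give $\eta'(u^n_j)D^t_+u^n_j=D^t_+\eta(u^n_j)-\tfrac{\Dt}{2}\eta''(\xi)(D^t_+u^n_j)^2$, so $A=M_\Dx-E_A$ with $E_A\ge 0$ and $M_\Dx$ converging, via summation by parts in time, to $-\int\eta(u)\psi_t\,dx\,dt$. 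The bound \eqref{eq:est_final2} gives $E_A\le C\lambda$, and since we are in the entropy-solution regime where $\mu(\Dx)=\smallO{\Dx^2}$ forces $\lambda=\Dt/\Dx\to 0$ by Remark~\ref{rem:mu}, we have $E_A\to 0$.

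For the flux term $B$, the key step is an \emph{exact} cell entropy inequality for the Engquist-Osher flux. Writing $\hat f(u,v)=f_+(u)+f_-(v)$ and $\hat Q(u,v)=Q_+(u)+Q_-(v)$ with $Q'_\pm(u)=\eta'(u)f'_\pm(u)$, convexity of $\eta$ (so that $\eta'$ is monotone) together with the sign of $f'_\pm$ yields
$$\eta'(u^n_j)\bigl(\hat f_{j+1/2}-\hat f_{j-1/2}\bigr)\ge \hat Q_{j+1/2}-\hat Q_{j-1/2},$$
via the elementary estimate $\int_a^b(\eta'(u^n_j)-\eta'(s))f'_\pm(s)\,ds\ge 0$. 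Decomposing $\Dm\hp^n=\km\,\Dm\hat f_{j+1/2}+\hat f_{j+1/2}\,\Dm\kp$ by Leibnitz and observing that on $\mathrm{supp}\,\psi$ the coefficient $k$ is Lipschitz and has locally definite sign (with a case split for $k<0$ using the transposed EO flux as defined in Section~\ref{sec:semi}), multiplication by $\km$ preserves the direction of the inequality. A further rearrangement $\km\,\Dm\hat Q=\Dm(\km\hat Q_{j+1/2})-\hat Q_{j-1/2}\,\Dm\km$ combined with summation by parts gives a lower bound for $B$ that converges to $-\int kQ(u)\psi_x\,dx\,dt+\int k'(x)\bigl(\eta'(u)f(u)-Q(u)\bigr)\psi\,dx\,dt$, using the consistency $\hat f(u,u)=f(u)$ and $\Dm\kp\to k'(x)$ (valid by Lipschitz regularity of $k$ on $\mathrm{supp}\,\psi$). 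Hence $\liminf B\ge -\int kQ\psi_x+\int k'(\eta'f-Q)\psi$.

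For the diffusion term, two summations by parts and the discrete Leibnitz rule give $C=-\beta\Dx^2\Dt\sum\psi\,\eta''(\zeta)(\Dm u)^2-\beta\Dx^2\Dt\sum\eta'(u_{j-1})\,\Dm\psi\,\Dm u$; the first piece is non-positive by convexity and the second is $O(\Dx^{1/2})$ by Cauchy--Schwarz with \eqref{eq:est_final1}, so $\limsup C\le 0$. For the dispersion term, Cauchy--Schwarz with \eqref{eq:est_final3} yields $|D|\le C\sqrt{\mu(\Dx)/\Dx}\to 0$. The identity $M_\Dx+B=E_A+C+D$ therefore gives $\limsup(M_\Dx+B)\le\limsup E_A+0+0=0$, so $\limsup B\le-\lim M_\Dx=\int\eta(u)\psi_t\,dx\,dt$. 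Combining with the lower bound on $\liminf B$ produces $-\int kQ\psi_x+\int k'(\eta'f-Q)\psi\le\int\eta(u)\psi_t$, which is precisely the claimed inequality.

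The main obstacle I anticipate is the flux step: orchestrating the case split on $\mathrm{sgn}(k)$ so that the cell entropy inequality survives multiplication by $\km$ and the EO transposition, and ensuring the discrete SBP/Leibnitz manipulations cleanly reproduce the continuous $k'$ source term in the limit (the latter relying on Lipschitz regularity of $k$ on each $\overline{\mathcal R_m}$ to justify $\Dm\kp\to k'$ in the appropriate integral sense). A more subtle but ultimately benign point is that the time-Taylor remainder $E_A$ vanishes only under the strong CFL $\lambda\to 0$ dictated by the entropy-solution regime of Remark~\ref{rem:mu}; outside that regime, $E_A$ is merely bounded and the present argument would need to be supplemented.
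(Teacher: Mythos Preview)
Your proposal is correct and takes a genuinely different route from the paper's proof. Whereas you multiply the scheme directly by $\eta'(u^n_j)\psi^n_j$ and establish the cell entropy inequality for the Engquist--Osher flux by hand (via the decomposition $\hat f=f_++f_-$ and the elementary estimate $\int_a^b(\eta'(u^n_j)-\eta'(s))f'_\pm(s)\,ds\ge 0$), the paper instead introduces the operator splitting $u^{n+1}_j = w^n_j + \text{(diffusion)}+\text{(dispersion)}$ with $w^n_j=u^n_j-\lambda\Delta_+(k_{j-1/2}\hat f^n_{j-1/2})$, invokes an off-the-shelf discrete Kru\v{z}kov inequality for $w^n_j$ from \cite[Lemma~4.1]{kenneth1} together with a smoothing argument from \cite[Lemma~5.7]{Triang_Cocliteetal}, and then Taylor-expands $\eta(u^{n+1}_j)-\eta(w^n_j)$ to isolate the diffusive and dispersive remainders $\mathcal{Q}^1,\ldots,\mathcal{Q}^4$. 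Your approach is more self-contained and makes the role of the EO structure explicit, at the cost of the $\mathrm{sgn}(k)$ case split you correctly flag; the paper's approach is more modular (it would carry over to any monotone flux for which the cited discrete inequality is available) but leans on external black boxes. Both routes require the entropy-regime condition $\mu(\Dx)=\smallO{\Dx^2}$, though at different places: you need it to force $\lambda\to 0$ so that the wrong-sign time remainder $E_A$ vanishes, while the paper absorbs that remainder into the cited discrete entropy inequality but then needs $\mu(\Dx)/\Dx^2\to 0$ to kill the cross terms $\mathcal{Q}^3,\mathcal{Q}^4$ coming from the $\eta(u^{n+1})-\eta(w)$ expansion. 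One minor correction: your bound $|D|\le C\sqrt{\mu(\Dx)/\Dx}$ covers the piece involving $\Dm\psi$, but after the summation by parts the piece involving $\Dm\eta'(u)=\eta''(\zeta)\Dm u$ is only controlled by $C\sqrt{\mu(\Dx)}/\Dx$; this still vanishes under $\mu(\Dx)=\smallO{\Dx^2}$, so the conclusion is unaffected.
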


\begin{proof}

We begin by rewriting the scheme \eqref{eq:scheme} as
\begin{align*}
u^{n+1}_j = w^n_j + \beta \Dx \Dt \Dp \Dm u^n_j + \gamma\, \Dt \,\mu(\Dx) D_{t}^{+} \Dp \Dm u^n_j,
\end{align*}
with
\begin{align*}
w^n_j  = u^n_j - \lambda \ \Delta_+ \left(  k_{j-\frac12} \hat{f}^n_{j-\frac12}\right),
\end{align*}
where $\Delta_+$ denote the undivided forward difference operator, and $ \hat{f}^n_{j+\frac12}$ is the EO flux corresponding to the flux function $f(u)$. Let $H_{j+\frac12}$ be a entropy flux function, consistent with $Q(u)$. 
Then, in light of Kenneth et al. \cite[Lemma 4.1]{kenneth1}, it is evident that for any $c \in \R$
\begin{align*}
\abs{w^n_j -c} \le \abs{u^n_j -c} - \lambda \left(  k_{j+\frac12} H_{j+\frac12} - k_{j-\frac12} H_{j-\frac12} \right)
- \lambda \,\sgn{w^n_j -c} f(c)\, \Delta_+\km.
\end{align*}
To obtain a similar type inequality for a smooth entropy-entropy flux pair, we follow a classical approximation argument.
For a rigorous proof, we refer to the paper by Karlsen et al. \cite[Lemma 5.7]{Triang_Cocliteetal}. In what follows, we have the following inequality:
\begin{align*}
\eta(w^n_j) \le \eta(u^n_j) - \lambda \left(  k_{j+\frac12} H_{j+\frac12} - k_{j-\frac12} H_{j-\frac12} \right)
+\lambda \left(Q(w^n_j)-\eta'(w^n_j)f(w^n_j) \right) \Delta_+\km
\end{align*}
After rearranging terms, a discrete entropy inequality for the scheme results
\begin{align*}
\eta^{n+1}_j\le \eta^n_j + \left(\eta^{n+1}_j -\eta(w^n_j)\right)  - \lambda \Delta_{+} \left( k_{j-\frac12} H_{j-\frac12} \right) + \lambda \, \xi^n_j \Delta_{+} k_{j-\frac12}.
\end{align*}
where $\xi^n_j=Q(w^n_j)-\eta'(w^n_j)f(w^n_j)$ and $\eta^n_j=\eta(u^n_j)$.
Next, multiplying the above inequality by $\Dx \Dt \psi_j^n $ with $\psi_j^n=\psi(x_j,t_n)$, where $\psi$ smooth, non-negative test function $\psi$
with compact support in $(\R \setminus \Omega) \times [0,T)$ and using summation by parts yields
\begin{align*}
\Dx \Dt \sum_{j} \sum_{n}   \psi^n_j\,D^t_{+}\eta^n_j  - \Dx \Dt & \sum_{j}  \sum_{n}  k_{j-\frac12} H_{j-\frac12} \Dm \psi^n_j \\
& - \Dx \Dt \sum_{j} \sum_{n} \psi_j^n \left(\xi^n_j \Dp k_{j-\frac12} + \left(\eta^{n+1}_j -\eta(w^n_j)\right)/\Dt  \right) \le 0.
\end{align*}
As before, a standard argument reveals that
\begin{align*}
\Dx \Dt \sum_{j} \sum_{n} \psi_j^n \,& D^t_{+} \eta(u^n_j)  
= - \Dx \Dt \sum_{j} \sum_{n} D^t_{-} \psi_j^n \, \eta(u^n_j) - \Dx  \sum_{j}  \psi_j^0 \, \eta(u^0_j) \\
& \qquad \qquad \quad \mapsto -\int_{\R} \int_0^T \eta(u) \psi_t \,dx\,dt - \int_{\R} \psi(x,0) \eta(u_0(x)) \,dx, \, \text{as} \, \Dx \downarrow 0.
\end{align*}
Next, we rewrite 
\begin{align*}
\Dx \Dt \sum_{j} \sum_{n}  k_{j+\frac12} H_{j+\frac12} \Dm \psi^n_j &= \underbrace{\Dx \Dt \sum_{j} \sum_{n}  k_{j+\frac12} \left(H_{j+\frac12} - Q(u^n_j) \right)\Dm \psi^n_j }_{\mathcal{E}^1_{\Dx}}\\
& \qquad \qquad \qquad \qquad +\underbrace{\Dx \Dt \sum_{j} \sum_{n}  k_{j+\frac12} Q(u^n_j) \Dm \psi^n_j, }_{\mathcal{E}^2_{\Dx}}
\end{align*}
where it is straightforward to conclude that
\begin{align*}
\mathcal{E}^2_{\Dx} \mapsto \int_{\R} \int_{0}^{T} k(x) \,Q(u)\, \psi_x \,dx\,dt, \, \text{as} \, \Dx \downarrow 0,
\end{align*}
and the a priori estimate \eqref{eq:est_final1} gives that
\begin{align*}
\abs{\mathcal{E}^1_{\Dx}}\le C (\Dx)^{1/2} \left(\Dx^2\Dt \sum_{n,j}\abs{\Dm u^n_j}^2 \right)^{1/2}
\norm{\psi}_{L^2([0,T]; H^1(\R))} \mapsto 0 \, \text{as} \, \Dx \downarrow 0.
\end{align*}
To estimate the term involving $\Dp \km$, we split the term as follows
\begin{align*}
\Dx \Dt   \sum_n \sum_j    & \psi^n_j    \xi^n_j        \Dp\km 
= \Dx \Dt    \sum_n \sum_j    \left( Q(w^n_j)-\eta'(w^n_j)f(w^n_j)\right)    \psi^n_j     \Dp \km\\
& =\underbrace{\Dx \Dt   \sum_n \sum_j  \left( Q(u^n_j)-\eta'(u^n_j)f(u^n_j)\right)    \psi^n_j     \Dp \km}_{\mathcal{E}^a_{\Dx}}\\
& \hspace{2cm} +\underbrace{ \Dx \Dt   \sum_n \sum_j  \left( Q(w^n_j)-Q(u^n_j)\right)    \psi^n_j     \Dp \km}_{\mathcal{E}^b_{\Dx}}\\
&\hspace{3cm} +\underbrace{\Dx \Dt    \sum_n \sum_j    \left( \eta'(w^n_j)f(w^n_j)-\eta'(u^n_j)f(u^n_j)\right)    \psi^n_j     \Dp \km}_{\mathcal{E}^c_{\Dx}}.
\end{align*}
Again, a straightforward argument shows that 
\begin{align*}
\mathcal{E}^a_{\Dx} \mapsto \int_{\R} \int_{0}^{T} \left( Q(u) - \eta'(u) f(u) \right) k'(x) \psi dx dt, \textrm{ as }
\Dx  \downarrow 0.
\end{align*}
For the other terms, we intend to show that 
\begin{align*}
\mathcal{E}^b_{\Dx},  \mathcal{E}^c_{\Dx} \mapsto 0 \textrm{ as } \Dx \downarrow 0.
\end{align*}
To see this, first note that
\begin{equation}
\label{eq:diff}
\begin{aligned}
\abs{Q(w^n_j)-Q(u^n_j)} \leq C \abs{w^n_j - u^n_j}
&\leq C \lambda \abs{\kp \fhp - \km \fhm}\\
& \leq C \lambda \left( \abs{\kp-\km} + \abs{u^n_{j+1}-u^n_j} + \abs{u^n_{j-1}-u^n_j}\right)
\end{aligned}
\end{equation}
Since $k(x)$ is Lipschitz continuous within the support of $\psi$, it is clear tha
\begin{align*}
\Dx \Dt   \sum_n \sum_j    \psi^n_j  \abs{\kp-\km} \abs{\Dp \km} 
\le \norm{\psi}_{\infty} T  \Dx   \norm{k'}_{\infty}
\abs{k}_{BV} \mapsto 0 \text{ as } \Dx \downarrow 0,
\end{align*}
and
\begin{align*}
&\Dx \Dt   \sum_n \sum_j    \psi^n_j  \abs{\Dp \km} \abs{u^n_{j+1}-u^n_j}\\
 &\hspace{3cm} \leq (\Dx)^{1/2} \left( \Dx^2 \Dt \sum_{n,j}
  \abs{\Dm u^n_j}^2\right)^{1/2}  \norm{k'}_{\infty}  \norm{\psi}_{L^2([0,T]; L^2(\R))}  \mapsto 0 \text{ as } \Dx \downarrow 0.
\end{align*}
This proves that $\mathcal{E}^b_{\Dx} \mapsto 0$ as $\Dx \mapsto 0$. A similar calculations show that 
 $\mathcal{E}^c_{\Dx} \mapsto 0$ as $\Dx \mapsto 0$.
 
Finally, the last term can be estimated via Taylor series as follows
\begin{align*}
&\Dx \Dt \sum_{n} \sum_{j} \psi^n_j \left(\eta^{n+1}_j -\eta(w^n_j)\right)/\Dt \\
& \qquad = \beta\,\Dx \Dt \sum_{j} \sum_{n}  \psi^n_j \eta'(u^n_j) \Dx \Dp \Dm u^n_j 
+ \gamma\,\Dx \Dt \sum_{j} \sum_{n}  \psi^n_j \eta'(u^n_j) \mu(\Dx) D_{t}^{+}  \Dp \Dm u^n_j  \\
& \qquad-  \Dx \sum_{j,n}   \lambda  \,\eta''(\theta_2) \, \psi^n_j \,\Delta_+\left(\km \fhm\right)
\,(u^{n+1}_j - w^n_j) 
 + \frac12 \Dx \sum_{j,n}  \psi^n_j \,  \eta''(\theta_1) \, \left(  u_j^{n+1} -w^n_j \right)^2 \\
& \qquad := \mathcal{Q}^1_{\Dx} + \mathcal{Q}^2_{\Dx} + \mathcal{Q}^3_{\Dx} + \mathcal{Q}^4_{\Dx}.
\end{align*}
For the first term $\mathcal{Q}^1_{\Dx}$, using the discrete chain rule, we proceed as follows
\begin{align*}
\mathcal{Q}^1_{\Dx}:=  \beta\,\Dx \Dt \sum_{j} \sum_{n}  \psi^n_j \eta'(u^n_j) \,  \Dx & \Dp \Dm u^n_j = \underbrace{\beta\,\Dx \Dt \sum_{j} \sum_{n} \Dx\, \psi^n_j \, \Dp \left( \eta'(u^n_j)\, \Dm u^n_j \right) }_{\mathcal{E}^3_{\Dx}}\\
& \qquad \qquad  \qquad \qquad - \underbrace{\beta\,\Dx \Dt \sum_{j} \sum_{n}  \Dx\,\psi^n_j \, \Dp u^n_j \,  \eta''(\theta^n_j)\, \Dp u^n_j}_{\mathcal{E}^4_{\Dx}},
\end{align*}
where using the non-negativity of the test function $\psi$, we conclude that
\begin{align*}
\mathcal{E}^4_{\Dx} \ge 0.
\end{align*}
Moreover, using the a priori bound \eqref{eq:est_final1}, we find
\begin{align*}
\abs{\mathcal{E}^3_{\Dx}} \le\beta\, C (\Dx)^{1/2} \left(\Dx^2 \Dt \sum_{j} \sum_{n} (\Dm u^n_j)^2 \right)^{1/2}\, \norm{\psi}_{L^2([0,T]; H^1(\R))} \mapsto 0 \, \text{as} \, \Dx \downarrow 0.
\end{align*}
For the next term $\mathcal{Q}^2_{\Dx}$, we proceed as follows:
\begin{align*}
\mathcal{Q}^2_{\Dx}:= \gamma\,\Dx \Dt \sum_{j} \sum_{n}  \psi^n_j \eta'(u^n_j) \, \mu(\Dx) & D_{t}^{+} \Dp \Dm u^n_j = \underbrace{\gamma\,\Dx \Dt \sum_{j,n} \mu(\Dx)\, \psi^n_j \, \Dp \left( \eta'(u^n_j)\, D_{t}^{+} \Dm u^n_j \right)}_{\mathcal{E}^5_{\Dx}} \\
& \qquad \qquad -\underbrace{ \gamma\,\Dx \Dt \sum_{j} \sum_{n}  \mu(\Dx) \,\psi^n_j \, D_{t}^{+} \Dp u^n_j \,  \eta''(\theta^n_j)\, \Dp u^n_j}_{\mathcal{E}^6_{\Dx}}.
\end{align*}
Making use of the a priori bound \eqref{eq:est_final3}, we conclude
\begin{align*}
\abs{\mathcal{E}^5_{\Dx}} \le \gamma\,C     \frac{\mu(\Dx)^{1/2}}{\Dx^{1/2}}\left(\mu(\Dx)\Dx^2 \Dt \sum_{j} \sum_{n} (D^t_{+}\Dm u^n_j)^2 \right)^{1/2}\, \norm{\psi}_{L^2([0,T]; H^1(\R))} \mapsto 0 \, \text{as} \, \Dx \downarrow 0,
\end{align*}
and
\begin{align*}
\abs{\mathcal{E}^6_{\Dx}} &\le\gamma\, C        \frac{\mu(\Dx)^{1/2}}{\Dx} \left( \mu(\Dx)\Dx^2 \Dt \sum_{j} \sum_{n} (D^t_{+}\Dm u^n_j)^2 \right)^{1/2}\, \left(\Dx^2 \Dt \sum_{j} \sum_{n} (\Dm u^n_j)^2 \right)^{1/2}\, \norm{\psi}_{\infty} \\
& \mapsto 0 \, \text{as} \, \Dx \downarrow 0,
\end{align*}
Next, we turn our focus on the term $\mathcal{Q}^3_{\Dx}$. In fact, we write
\begin{align*}
\mathcal{Q}^3_{\Dx}:= \Dx &\sum_{j} \sum_{n}  \lambda  \,\eta''(\theta_2) \, \psi^n_j \,\Delta_+\left(\km\fhm\right) \,(u^{n+1}_j - w^n_j) \\
& \qquad=\underbrace{\beta\,\Dx      \sum_{j} \sum_{n}       \lambda \,      \eta''(\theta_2) \,      \psi^n_j \,      
\Delta_+\left(\km\fhm\right)\,     \Dx \Dt     \Dp \Dm u^n_j }_{\mathcal{E}^7_{\Dx}}\\
& \qquad \qquad \quad +\underbrace{ \gamma\,\Dx \sum_{j} \sum_{n}  \lambda  \,\eta''(\theta_2) \, \psi^n_j \,\Delta_+\left(\km\fhm\right)\,        \Dt \mu(\Dx) \, 
D_{t}^{+} \Dp \Dm u^n_j}_{\mathcal{E}^8_{\Dx}}.
\end{align*}
Before we proceed, we recall that in accordance to Remark~\ref{rem:mu} $\lambda = \zeta \mu(\Dx)/(\Dx)^2$, or in other words $\Dt = \zeta \mu(\Dx)/\Dx$.  We also need to use the identity $\Dx \Dp \Dm u^n_j = \Dp u^n_j - \Dm u^n_j$. Thus, using the discrete chain rule for the term $\Delta_+\left(\km\fhm\right)$ and  apply Cauchy-Schwartz inequality repeatedly, we obtain
\begin{align*}
\abs{\mathcal{E}^7_{\Dx}}
 & \le \beta\, C \frac{\mu(\Dx)}{\Dx^2} \norm{\psi}_{\infty}   \left[ \abs{k}_{BV}      
                     \left(  \Dx^2 \Dt     \sum_{j} \sum_{n}    (\Dm u^n_j)^2    \right)^{1/2}        \right. \\
&      \hspace{6cm}  \left. + \norm{k}_{\infty} \Dx^2 \Dt     \sum_{j} \sum_{n}    (\Dm u^n_j)^2 \right] \mapsto 0 \, \text{as} \, \Dx \downarrow 0.
\end{align*}
Similarly,
\begin{align*}
\abs{\mathcal{E}^8_{\Dx}}
 & \le \gamma\,C \frac{\mu(\Dx)}{\Dx^2} \norm{\psi}_{\infty}   \left[\abs{k}_{BV}     
                     \left(  \Dx^2 \mu(\Dx) \Dt     \sum_{j} \sum_{n}    (\Dpt\Dm u^n_j)^2    \right)^{1/2}        \right. \\
&      \hspace{5cm}  \left. + \norm{k}_{\infty} \Dx^2 \mu(\Dx) \Dt     \sum_{j} \sum_{n}    (\Dpt\Dm u^n_j)^2 \right] \mapsto 0 \, \text{as} \, \Dx \downarrow 0.
\end{align*}
Finally, we are left with the term $\mathcal{Q}^4_{\Dx}$. We see that 
\begin{align*}
\mathcal{Q}^4_{\Dx}:=  \frac12 \Dx \sum_{j} \sum_{n}   \psi^n_j \,  \eta''(\theta_1) \,  & \left(  u_j^{n+1} -w^n_j \right)^2 \\
& \le \underbrace{\beta^2\Dx \sum_{j} \sum_{n}  \psi^n_j \,  \eta''(\theta_1) \,
                                \left(  \Dx \Dt \Dp \Dm u^n_j \right)^2 }_{\mathcal{E}^{9}_{\Dx}}\\
& \hspace{2.5cm} + \underbrace{ \gamma^2\,\Dx \sum_{j} \sum_{n}  \psi^n_j \,  \eta''(\theta_1) \, \left(  \mu(\Dx) \Dt D_{t}^{+}\Dp \Dm u^n_j \right)^2}_{\mathcal{E}^{10}_{\Dx}}.
\end{align*}
We start with the first term $\mathcal{E}^9_{\Dx}$. Making use of the a priori estimate \eqref{eq:est_final1}, we conclude
\begin{align*}
\abs{\mathcal{E}^{9}_{\Dx}}  \le C \frac{\mu(\Dx)}{\Dx^2}     \left(\Dx^2 \Dt \sum_{j} \sum_{n} (\Dm u^n_j)^2 \right)  \,\norm{\psi}_{\infty} \mapsto 0 \, \text{as} \, \Dx \downarrow 0.
\end{align*}
Similarly, making use of the a priori estimate \eqref{eq:est_final3}, we argue that
\begin{align*}
\abs{\mathcal{E}^{10}_{\Dx}}  \le C \frac{\mu(\Dx)^2}{\Dx^4}     \left(\Dx^2 \mu(\Dx)\Dt \sum_{j} \sum_{n} (\Dpt\Dm u^n_j)^2 \right)  \,\norm{\psi}_{\infty} \mapsto 0 \, \text{as} \, \Dx \downarrow 0.
\end{align*}

\end{proof}

Now in view of the above result, we are ready to prove the following lemma:
\begin{lemma}
\label{thm:entropy11}
Let $u(x,t)$ be a weak solution constructed as the limit of the approximation $u_{\Dx}$ generated
by the scheme \eqref{eq:scheme} with $ \mu(\Dx)= \scalebox{1.1}{$\scriptstyle\mathcal{O}$}({\Dx^2})$.
For the test function $\psi \ge 0$ with compact
support in $t>0$, $x \in \R \setminus \Omega$, and every $c \in \R$, the following 
entropy inequality holds
\begin{align*}
\int_{\R} \int_0^T & \left( \abs{u-c} \psi_t  + \sgn{ u-c}  k(x) \left(f(u) -f(c) \right) \psi_x \right) \,dx\,dt \\
& \qquad \qquad \qquad \quad + \int_{\R} \abs{u_0 -c} \psi(x,0)\,dx -  \int_{\R \setminus \Omega} \int_0^T \sgn{u-c}k'(x) \,f(c)\,\psi \,dx\,dt  \ge 0.
\end{align*}
\end{lemma}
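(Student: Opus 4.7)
The plan is to deduce Lemma \ref{thm:entropy11} from Lemma \ref{thm:entropy1} by a standard regularization of the Kru\v{z}kov entropy. Since Lemma \ref{thm:entropy1} is restricted to $C^2$ convex entropies, but the Kru\v{z}kov entropy $u \mapsto |u-c|$ is only Lipschitz, I would build a smooth approximating family $\{\eta_\eps\}$, apply Lemma \ref{thm:entropy1} to each $(\eta_\eps, Q_\eps)$, and pass to the limit $\eps \downarrow 0$ using the uniform $L^\infty$ bound on $u$.

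Concretely, I would take the smoothing
\[
\eta_\eps(u) = \sqrt{(u-c)^2 + \eps^2} - \eps,
\]
which is $C^\infty$, strictly convex, satisfies $|\eta_\eps'| \le 1$, and converges uniformly on compact sets to $|u-c|$ with $\eta_\eps'(u) \to \sgn{u-c}$ pointwise. I then define the companion flux compatible with the relation in Lemma \ref{thm:entropy1},
\[
Q_\eps(u) := \int_c^u \eta_\eps'(s)\, f'(s)\,ds,
\]
so that $Q_\eps(u) \to \sgn{u-c}\,(f(u)-f(c))$ pointwise and boundedly by dominated convergence. The only nonobvious ingredient is the limit of the $k'$-residual $\eta_\eps'(u)f(u) - Q_\eps(u)$; I would identify it via an integration by parts,
\[
\eta_\eps'(u) f(u) - Q_\eps(u) = \eta_\eps'(c) f(c) + \int_c^u \eta_\eps''(s)\, f(s)\,ds,
\]
and observe that $\eta_\eps''$ is a nonnegative approximation of a multiple of $\delta_c$ with unit mass on each side of $c$ (since $\eta_\eps'(\pm \infty)=\pm 1$). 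Hence for $u\neq c$ the right-hand side converges to $\sgn{u-c}\, f(c)$, with a uniform bound $\|f\|_\infty \cdot 2$ on the support of $\psi$.

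Applying Lemma \ref{thm:entropy1} to $(\eta_\eps, Q_\eps)$ yields
\[
\int_{\R}\!\!\int_0^T \bigl(\eta_\eps(u)\psi_t + k\, Q_\eps(u)\psi_x\bigr)\,dx\,dt + \int_{\R}\eta_\eps(u_0)\psi(x,0)\,dx - \int_{\R}\!\!\int_0^T k'(x)\bigl(\eta_\eps'(u) f(u) - Q_\eps(u)\bigr)\psi\,dx\,dt \ge 0.
\]
Because $\supp \psi$ is a compact subset of $(\R\setminus \Omega)\times(0,T)$, $k'$ is essentially bounded on $\supp\psi$; together with the uniform bound on $u$ from \ref{def:b1}, each integrand admits an $\eps$-independent $L^1$ majorant. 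Dominated convergence then delivers exactly the Kru\v{z}kov-type inequality stated in Lemma \ref{thm:entropy11}.

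The main (and only) obstacle is the correct identification of the limit of $\eta_\eps'(u)f(u)-Q_\eps(u)$: naively replacing $\eta_\eps'$ by $\sgn{\cdot-c}$ inside both summands would give $\sgn{u-c}(f(u)-(f(u)-f(c))) = \sgn{u-c}f(c)$ by cancellation, but this requires the integration-by-parts identity above to make the pointwise convergence rigorous. Everything else is routine dominated convergence, and no further assumption on $\mu(\Dx)$ beyond that already embedded in Lemma \ref{thm:entropy1} is needed.
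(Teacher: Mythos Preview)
Your proposal is correct and follows essentially the same route as the paper: both deduce the Kru\v{z}kov inequality directly from Lemma~\ref{thm:entropy1}. The paper simply invokes Lemma~\ref{thm:entropy1} with $\eta(u)=|u-c|$ and $Q(u)=\sgn{u-c}(f(u)-f(c))$ without further comment, whereas you make the standard $C^2$-regularization step explicit---which is arguably more rigorous, since Lemma~\ref{thm:entropy1} is stated only for $C^2$ entropies.
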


\begin{proof}  
A simple manifestation of the above Lemma~\ref{thm:entropy1} for the specific entropy $\eta(u) =\abs{u-c}$ and consequently the entropy flux function $Q(u) =\sgn{u-c} (f(u) -f(c))$ essentially completes the proof.
\end{proof}

\begin{lemma}
\label{thm:entropy12}
Let $u(x,t)$ be a weak solution constructed as the limit of the approximation $u_{\Dx}$ generated
by the scheme \eqref{eq:scheme} with $ \mu(\Dx)= \scalebox{1.1}{$\scriptstyle\mathcal{O}$}({\Dx^2})$. Let $0 \le \psi \in \mathcal{D}(\R \times [0,T]) $. Then the following entropy inequality is satisfied for all $c \in \R$
\begin{align*}
\int_{\R} \int_0^T & \left( \abs{u-c} \psi_t + \sgn{ u-c}  k(x) \left(f(u) -f(c) \right) \psi_x \right) \,dx\,dt 
 + \int_{\R} \abs{u_0 -c} \psi(x,0)\,dx \\
 & \qquad \qquad + \abs{f( c)} \int_{\R \setminus \Omega} \int_0^T \abs{k'(x)} \psi \,dx\,dt 
 +  \sum_{m=1}^{M} \int_0^T \abs{ f(c) (k_m^{+} - k_m^{-})} \psi(\xi_m, t) \,dt \ge 0.
\end{align*}
\end{lemma}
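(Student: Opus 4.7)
The plan is to revisit the argument of Lemma~\ref{thm:entropy1} for the Kru\v{z}kov pair $\eta(u)=|u-c|$, $Q(u)=\sgn{u-c}(f(u)-f(c))$ \emph{without} restricting the support of $\psi$ to $(\R\setminus\Omega)\times[0,T)$, carefully tracking the contributions coming from grid cells that contain the jump points $\xi_m$. The observation that makes this extension tractable is that, for this pair, the quantity $\xi^n_j := Q(w^n_j)-\eta'(w^n_j)f(w^n_j)$ appearing in the discrete cell entropy inequality
\[
\eta^{n+1}_j \le \eta^n_j + \bigl(\eta^{n+1}_j-\eta(w^n_j)\bigr) - \lambda\,\Delta_+\bigl(k_{j-\frac12}H_{j-\frac12}\bigr) + \lambda\,\xi^n_j\,\Delta_+ k_{j-\frac12}
\]
reduces to $\xi^n_j = -\sgn{w^n_j-c}\,f(c)$, and therefore satisfies the uniform estimate $|\xi^n_j|\le|f(c)|$.

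Multiplying this inequality by $\Dx\Dt\,\psi^n_j$ for an arbitrary non-negative $\psi\in\mathcal{D}(\R\times[0,T])$, summing over $(j,n)$, and applying summation by parts yields a discrete inequality whose time-derivative, convective flux, initial data, Taylor remainder, and diffusion/dispersion contributions can be handled exactly as in the proof of Lemma~\ref{thm:entropy1}, invoking Lemma~\ref{lemma2} together with the scaling $\mu(\Dx)=\scalebox{1.1}{$\scriptstyle\mathcal{O}$}(\Dx^2)$ to annihilate the dispersive error. The only new element is the analysis of
\[
\mathcal{A}_\Dx \;:=\; \Dt\sum_{j,n}\psi^n_j\,\xi^n_j\,\bigl(k_{j+\frac12}-k_{j-\frac12}\bigr),
\]
which I split into the indices $j$ whose cell $[\xm,\xp]$ lies in $\R\setminus\Omega$ and the (finitely many) indices $j\in S_m(\Dx)$ for which the cell meets some $\xi_m$. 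Using the Lipschitz regularity of $k$ on each $\overline{\mathcal{R}}_m$, the first piece is treated verbatim as in the proof of Lemma~\ref{thm:entropy1} and converges to $-\int_0^T\!\!\int_{\R\setminus\Omega}\sgn{u-c}\,f(c)\,k'(x)\,\psi\,dx\,dt$.

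For the jump piece, the telescoping identity $\sum_{j\in S_m(\Dx)}(k_{j+\frac12}-k_{j-\frac12})=k_m^+-k_m^-+o(1)$, the continuity of $\psi$, and the uniform bound $|\xi^n_j|\le|f(c)|$ together give
\[
\limsup_{\Dx\downarrow 0}\Bigl|\Dt\sum_{n,\,j\in S_m(\Dx)}\psi^n_j\,\xi^n_j\,(k_{j+\frac12}-k_{j-\frac12})\Bigr| \;\le\; \int_0^T |f(c)(k_m^+-k_m^-)|\,\psi(\xi_m,t)\,dt.
\]
Passing to the $\limsup$ in the rearranged discrete inequality therefore delivers
\begin{align*}
\int_{\R}\!\int_0^T\! |u-c|\psi_t &+ \sgn{u-c}k(x)(f(u)-f(c))\psi_x\,dx\,dt + \int_{\R}|u_0-c|\psi(x,0)\,dx \\
- \int_{\R\setminus\Omega}\!\int_0^T\!\sgn{u-c}f(c)k'(x)\psi\,dx\,dt &+ \sum_{m=1}^{M}\!\int_0^T\!|f(c)(k_m^+-k_m^-)|\psi(\xi_m,t)\,dt \;\ge\; 0,
\end{align*}
and the claim follows by replacing $-\sgn{u-c}f(c)k'(x)$ on the left by its majorant $|f(c)|\,|k'(x)|$, which only strengthens the left-hand side.

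The principal obstacle is the absence of traces for $u$ at the points $\xi_m$: lacking BV regularity, the discrete sign $\sgn{w^n_{j_m}-c}$ may oscillate as $\Dx\downarrow 0$, so identifying a signed limit for the jump contribution is not possible. The argument circumvents this difficulty by retaining the uniform bound $|\xi^n_j|\le|f(c)|$ at the discrete level and passing only to the $\limsup$; this is precisely what forces the appearance of the absolute value $|f(c)(k_m^+-k_m^-)|$ (rather than a signed expression involving the one-sided traces of $u$) in the final jump contribution.
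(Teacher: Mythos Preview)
Your argument is correct. The key observation that $\xi^n_j=-\sgn{w^n_j-c}\,f(c)$ is uniformly bounded by $|f(c)|$ is exactly what makes the jump contribution controllable, and your splitting of $\mathcal{A}_\Dx$ into smooth cells (handled via the $\mathcal{E}^a_\Dx,\mathcal{E}^b_\Dx,\mathcal{E}^c_\Dx$ decomposition of Lemma~\ref{thm:entropy1}) and a finite set of jump cells (bounded in absolute value and passed to the $\limsup$) is sound. You are also right that the diffusive and dispersive error terms $\mathcal{Q}^1_\Dx,\dots,\mathcal{Q}^4_\Dx$ require only the BV bound on $k$ and the a~priori estimates of Lemma~\ref{lemma2}, not Lipschitz continuity, so they carry over unchanged to general $\psi$.

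The route, however, is not the one the paper takes. The paper invokes \cite[Lemma~4.2]{kenneth1}, which operates at the \emph{continuum} level: one first applies Lemma~\ref{thm:entropy1} (hence Lemma~\ref{thm:entropy11}) to test functions $\psi(1-\chi_\varepsilon)$, where $\chi_\varepsilon$ is a smooth cutoff concentrated on $\varepsilon$-neighbourhoods of the jump points $\xi_m$, and then sends $\varepsilon\downarrow 0$; the boundary layer produces the term $\sum_m\int_0^T|f(c)(k_m^+-k_m^-)|\psi(\xi_m,t)\,dt$ via the same bound $|\sgn{u-c}f(c)|\le|f(c)|$. Your approach instead performs the smooth/jump splitting at the \emph{discrete} level before passing to the limit. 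The two arguments are equivalent in spirit and rest on the same pointwise bound; the continuum cutoff argument has the advantage of using Lemma~\ref{thm:entropy11} as a black box and avoiding any re-examination of the scheme, while your discrete approach is more self-contained and makes the origin of the jump term slightly more transparent.
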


\begin{proof}
A straightforward adaptation of \cite[Lemma 4.2]{kenneth1} along with the help of Lemma~\ref{thm:entropy1} concludes the proof.
\end{proof}

To proceed further, we combine above two lemma's. In what follows, we have the following important theorem:

\begin{theorem}
\label{thm:theorem2}
Let $u(x,t)$ be a weak solution constructed as the limit of the approximation $u_{\Dx}$ generated
by the scheme \eqref{eq:scheme} with $ \mu(\Dx)= \scalebox{1.1}{$\scriptstyle\mathcal{O}$}({\Dx^2})$. 
Let $0 \le \psi \in \mathcal{D}(\R \times [0,T]) $. Then the following
entropy inequality is satisfied for all $c \in \R$
\begin{align*}
\int_{\R} \int_0^T & \left( \abs{u-c} \psi_t + \sgn{ u-c}  k(x) \left(f(u) -f(c) \right) \psi_x \right) \,dx\,dt 
 + \int_{\R} \abs{u_0 -c} \psi(x,0)\,dx \\
 & \qquad \quad +  \int_{\R \setminus \Omega} \int_0^T \sgn{u-c} k'(x) \,f(c)\,\psi \,dx\,dt 
+  \sum_{m=1}^{M} \int_0^T \abs{ f(c) (k_m^{+} - k_m^{-})} \psi(\xi_m, t) \,dt \ge 0.
\end{align*}
\end{theorem}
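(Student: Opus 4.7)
The plan is to derive the entropy inequality by combining the two preceding lemmas via a partition-of-unity argument localized around the jump set $\Omega = \bigcup_{m=1}^M \omega_m$. Concretely, I would first split an arbitrary $0 \le \psi \in \mathcal{D}(\R \times [0,T])$ into a piece supported away from $\Omega$, on which Lemma~\ref{thm:entropy11} applies, and a piece concentrated near $\Omega$, on which Lemma~\ref{thm:entropy12} applies; adding the two resulting inequalities and sending the cutoff parameter to zero then produces the claimed inequality.

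In detail, for $\rho > 0$ small I would pick a smooth cutoff $\phi_\rho \in C^\infty(\R;[0,1])$ depending only on $x$, with $\phi_\rho \equiv 1$ on the $\rho/2$-neighborhood $\bigcup_m(\xi_m - \rho/2,\, \xi_m+\rho/2)$ and $\phi_\rho \equiv 0$ outside the corresponding $\rho$-neighborhood. Setting $\psi^1_\rho := (1-\phi_\rho)\psi$ and $\psi^2_\rho := \phi_\rho \psi$, the first is a valid (nonnegative) test function for Lemma~\ref{thm:entropy11} since its support lies compactly in $(\R \setminus \Omega) \times [0,T]$, while the second is admissible in Lemma~\ref{thm:entropy12}. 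Applying the two lemmas to the respective pieces and adding the inequalities, linearity of every integrand in the test function (the absolute-value terms in Lemma~\ref{thm:entropy12} are linear on the positive cone) recombines $\psi^1_\rho + \psi^2_\rho = \psi$ in the bulk contributions, leaving a residual of $\phi_\rho$-dependent terms localized in the $\rho$-neighborhood of $\Omega$, together with the jump contribution $\sum_m \int_0^T \abs{f(c)(k_m^+ - k_m^-)}\, \phi_\rho(\xi_m)\, \psi(\xi_m,t)\,dt$.

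I would then pass to the limit $\rho \downarrow 0^+$. Since $k$ is piecewise Lipschitz, $k'$ is bounded on $\R \setminus \Omega$; combined with the compact support of $\psi$, every bulk residual of the form $\int_0^T\!\int_\R k'(x)\, h(u,c)\, \phi_\rho(x)\, \psi(x,t)\,dx\,dt$ admits a fixed $L^1$ dominant, and since $\phi_\rho \to 0$ pointwise on the full-measure set $\R\setminus\Omega$, dominated convergence forces such residuals to zero. Meanwhile $\phi_\rho(\xi_m) = 1$ identically in $\rho$, so the jump sum equals $\sum_m \int_0^T \abs{f(c)(k_m^+ - k_m^-)}\, \psi(\xi_m,t)\,dt$ throughout the limit and survives as the jump contribution in the theorem. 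Collecting the surviving pieces yields the asserted entropy inequality.

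The main obstacle is bookkeeping the signs and coefficients when recombining the two lemmas: one must carefully track how the signed bulk term $\int \sgn{u-c}\, k'(x)\, f(c)\, \psi^1_\rho\,dx\,dt$ from Lemma~\ref{thm:entropy11} and the unsigned bulk term $\abs{f(c)}\int \abs{k'(x)}\, \psi^2_\rho\,dx\,dt$ from Lemma~\ref{thm:entropy12} interact, and to verify that in the limit $\rho \downarrow 0$ the surviving bulk $k'$-integral aligns with $\int_{\R\setminus\Omega}\int_0^T \sgn{u-c}\, k'(x)\, f(c)\, \psi\,dx\,dt$ as it appears in the statement. Beyond this bookkeeping, the argument reduces to elementary dominated-convergence estimates, using only the boundedness of $k'$ on $\R \setminus \Omega$ and the compact support of $\psi$.
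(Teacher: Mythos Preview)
Your proposal is correct and follows exactly the approach the paper intends: the paper's own proof consists of a single sentence deferring to \cite[Lemma~4.4]{kenneth1}, whose argument is precisely the partition-of-unity localization around the jump set that you have outlined. Your identification of the sign/coefficient bookkeeping between the signed $k'$-term from Lemma~\ref{thm:entropy11} and the unsigned $|k'|$-term from Lemma~\ref{thm:entropy12} as the only delicate point is also accurate.
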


\begin{proof}
A verbatim copy of the proof of \cite[Lemma 4.4]{kenneth1} ensures the proof.

\end{proof}

\subsection{Uniqueness of Entropy Solutions}
Following \cite{kenneth1}, we mention that entropy solutions are unique under a crossing condition. 
It is well known that one has to impose the crossing condition only because the entropy inequality \eqref{eq:ent} alone is not
sufficient to guarantee uniqueness when the crossing condition is violated. However, we mention that
in the multiplicative case $f(k, u) = k(x)\, f(u)$ there is no flux crossing, hence we don't assume any such crossing condition.

One more technical issue is the existence of traces along the discontinuity curves $\omega_m$, for
$m=1,2,\cdots,M$. Without going into the details of it, we simply mention one instance where we
automatically have the existence of strong traces. We remark that, due to the genuinely nonlinearity
assumption, existence of traces is guaranteed when $k(x)$ is constant on each region $\mathcal{R}_m$, for $m=1,2,\cdots,M$.
This is a consequence of a general result by 
Vasseuer \cite{vasseuer}. For more general case, we encourage the readers to
consult \cite{kenneth1} for general assumptions ensuring the existence of traces.

To this end, we collect all the above mentioned result in the following theorem.

\begin{theorem}\label{thm:main}
Assume that the assumptions ~\ref{def:w1}, \ref{def:w2}, \ref{def:w3}, \ref{def:w4}, and \ref{def:b1} hold.
Moreover, suppose that the addition regularity assumptions on the discontinuous coefficient $k$ holds,
and the traces along the discontinuity curves exist. Then a unique entropy solution to the Cauchy
problem \eqref{eq:discont} exists and the entire computed sequence of approximations ${\lbrace u_{\Dx} \rbrace}_{\Dx>0}$, generated by the scheme \eqref{eq:scheme}
with $ \mu(\Dx)= \scalebox{1.1}{$\scriptstyle\mathcal{O}$}({\Dx^2})$, converges to the unique entropy solution of \eqref{eq:discont}.
\end{theorem}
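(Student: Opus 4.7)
The plan is to treat this theorem as a synthesis result that packages together the machinery developed in Sections~\ref{sec:energy}--\ref{sec:entropy}. Three assertions must be established: (i) existence of an entropy solution, (ii) uniqueness of the entropy solution, and (iii) convergence of the \emph{entire} approximating sequence $\{u_{\Dx}\}_{\Dx>0}$, not merely of a subsequence.

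For existence and subsequential convergence I would chain together the results already proved. Lemma~\ref{lemma2} supplies the a priori bounds \eqref{eq:est_final}--\eqref{eq:est_final3} under the CFL condition \eqref{eq:cfl}; Lemma~\ref{lem:compact} together with Lemmas~\ref{lem:compact1}--\ref{lem:compact6} provide the $H^{-1}_{\loc}$ compactness of the entropy dissipation distributions associated with the pairs $(\eta_i,q_i)$ of \eqref{eq:entropies}; and Theorem~\ref{thm:compcomp} then furnishes a subsequence $\{u_{\Dx_k}\}$ converging a.e. on any bounded $\Omega\subset\R\times[0,T]$ to some $u\in L^\infty(\R\times[0,T])$. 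Since the hypothesis $\mu(\Dx)=\smallO{\Dx^2}$ is strictly stronger than $\mu(\Dx)=\bigO{\Dx^2}$, Theorem~\ref{thm:theorem1} applies and identifies $u$ as a weak solution of \eqref{eq:discont}. Under the sharper scaling $\mu(\Dx)=\smallO{\Dx^2}$, Theorem~\ref{thm:theorem2} further asserts that $u$ satisfies the Kru\v{z}kov-type entropy inequality \eqref{eq:ent} for all $c\in\R$ and all nonnegative test functions. Hence the limit $u$ is an entropy solution, and existence is established.

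For uniqueness I would invoke \cite[Theorem~3.1 or the corresponding comparison result]{kenneth1}. The multiplicative structure $f(k,u)=k(x)f(u)$ rules out any genuine flux crossing between the one-sided fluxes at a discontinuity $\xi_m$ (the two branches $k_m^{\pm}f(\cdot)$ either coincide or differ by a scalar factor of constant sign), so no auxiliary crossing condition has to be imposed. The trace-existence hypothesis built into the statement — or, alternatively, Vasseur's trace theorem under the genuine nonlinearity \ref{def:w4} in the piecewise-constant-$k$ case — guarantees that the boundary terms in \eqref{eq:ent} are meaningful. With these ingredients the $L^1$-contraction/uniqueness machinery of \cite{kenneth1} yields a unique entropy solution.

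Finally, convergence of the whole sequence follows from a standard subsubsequence argument. Suppose, for contradiction, that $u_{\Dx}\not\to u$ in $L^1_{\loc}(\R\times[0,T])$; then some subsequence is bounded away from $u$ in this norm. Applying the compactness chain above to this subsequence extracts a further subsequence converging a.e. to some limit $\tilde u$, which by Theorems~\ref{thm:theorem1} and \ref{thm:theorem2} is also an entropy solution, hence $\tilde u=u$ by uniqueness — a contradiction. I do not expect a genuine obstacle here, since every analytical ingredient has already been assembled; the only delicate point is checking that the hypotheses of \cite{kenneth1} apply verbatim in the present multiplicative-flux setting, which the discussion preceding the theorem statement has already arranged.
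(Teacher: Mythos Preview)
Your proposal is correct and matches the paper's approach: the paper presents Theorem~\ref{thm:main} without a formal proof, explicitly describing it as a statement that ``collect[s] all the above mentioned result[s]'' --- namely Theorem~\ref{thm:theorem2} for the entropy inequality, the uniqueness theory from \cite{kenneth1} (noting that no crossing condition is needed in the multiplicative case and that traces exist by Vasseur's result under genuine nonlinearity), and the standard subsubsequence argument you spell out to upgrade subsequential convergence to convergence of the full sequence.
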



\section{Diffusive-Dispersive Approximations}
\label{ap:dif_dis}

As we already pointed out in the introduction, 
all the aforementioned techniques can be utilized  
to analyze the equation given by the diffusive dispersive approximations 
of scalar conservation laws with a discontinuous flux of the type
\begin{equation}
  \label{eq:system_a}
  \begin{cases}
     u^{\eps}_t + f (k(x),u^{\eps} )_x 
     = \eps \beta \,u^{\eps}_{xx} + \mu(\eps) \gamma \,u^{\eps}_{xxx}, &\ \ x \in  \R \times (0,T),\\
    u^{\eps}(x,0)=u_0(x), &\ \ x \in \R,
  \end{cases}
\end{equation}
when $\eps >0$ tends to zero with $0<\mu(\eps) \mapsto 0$ as $\eps \mapsto 0$.
Here $T>0$ is fixed, $\beta, \gamma >0$ are fixed parameters, $u^{\eps}: \R \times
[0,T) \mapsto \R$ is the unknown scalar map, 
$u_0$ the initial data, $k: \R \mapsto \R$ is a spatially varying coefficient, and the flux function $f : \R^2 \mapsto \R$
is a sufficiently smooth scalar function.

We propose the following fully-discrete (in space and time) finite difference scheme 
approximating the limiting solutions generated by the equation \eqref{eq:system_a}
\begin{align}
D^t_{+} u^n_j + \Dm \hp^n &= \beta \Dx \Dp\Dm u^n_j  + \gamma \mu(\Dx) \Dp \Dm^2 u^n_j, \quad j \in \Z, \, n \in \N_0, \label{eq:scheme_a}\\
u^0_j &=\frac{1}{\Dx}\int_{\xm}^{\xp}u_0(\theta) d\theta,\quad j \in \Z, \label{eq:scheme_initial_a}
\end{align}
where $\beta, \gamma >0$ are fixed parameters, and $\mu(\Dx) \mapsto 0$ as $\Dx \mapsto 0$. 
As before, we will either use $\mu(\Dx)=\mathcal{O}(\Dx^2)$ 
or $ \mu(\Dx)= \scalebox{1.1}{$\scriptstyle\mathcal{O}$}({\Dx^2})$ depending on the
quest for the convergence of approximate solution $u_{\Dx}$ towards a weak solution or the entropy solution, respectively. Note that in this case, in contrast to the a priori estimates in section~\ref{sec:energy}, we have 
the following estimates

\begin{lemma}
\label{lemma2_a}
Let $u_{\Dx}$ be a sequence of approximations generated by the scheme 
\eqref{eq:scheme_a}-\eqref{eq:scheme_initial_a}. 
Moreover, assume that the initial data $u_0$ lies in $L^2(\R)$. Then  
the following estimate holds
\begin{equation}
\label{eq:std_a}
\begin{aligned}
\frac{1}{2} D^t_{+} \norm{u^n}^2 +\delta \frac{\gamma \Dx \mu(\Dx)}{2}\norm{\Dm^2 u^n}^2 +\delta \Dx \norm{\Dm u^n}^2 \le C,
\end{aligned}
\end{equation}
provided $\Dt$ and $\Dx$ satisfies the following CFL condition 
\begin{align}
\label{eq:cfl_a}
\max{ \Bigg\{ 2 \lambda \left(\beta^2 \frac{\Dx^2}{\gamma \mu(\Dx)} + 2\frac{\gamma \mu(\Dx)}{\Dx^2}\right), \,\, 8\lambda\norm{k}^2\norm{f^\prime}^2 \Bigg\} } \le \min{(1 -\delta,\beta-\delta)}, \quad \delta \in (0,\min{(1,\beta)}),
\end{align}
where $\lambda= \Dt/\Dx$ and the constant $C>0$ is independent of $\Dx$. 

In particular, the estimate \eqref{eq:std_a} guarantees following space-time estimates:
\begin{subequations}
\begin{align}
\forall n \in \N, \quad \Dx \sum_{j} (u^n_j)^2 & \le C,  \label{eq:est_final_a}\\
 \Dx^2\Dt \sum_{j} \sum_{n} (\Dm u^n_j)^2  &\le C, \label{eq:est_final1_a}\\
 \Dt \Dx^2 \mu(\Dx) \sum_{j} \sum_{n} (\Dm^2 u^n_j)^2 &\le C.\label{eq:est_final2_a}\\
  \Dx^2 \Dt \sum_{j} \sum_{n} (D^t_{+} u^n_j)^2 &\le C, \label{eq:est_final3_a}
\end{align}
\end{subequations}
\end{lemma}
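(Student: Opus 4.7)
My plan is to adapt the energy argument of Lemma~\ref{lemma2} to accommodate the purely spatial dispersion $\gamma\mu(\Dx)\,\Dp\Dm^2 u_j^n$. First I would multiply the scheme \eqref{eq:scheme_a} by $\Dx\,u_j^n$ and sum over $j\in\Z$. Identity~\eqref{eq:iden} converts the temporal term into $\frac{1}{2}D^t_+\norm{u^n}^2 - \frac{\Dt}{2}\norm{D^t_+ u^n}^2$; the flux term $\Dx\sum_j u^n_j \Dm\hp^n$ satisfies $-\mathcal{I}_{\Dx}(f) \le C$ via the Engquist--Osher plus $\mathrm{BV}(k)$ argument verbatim from the proof of Lemma~\ref{lemma2}; and a single summation by parts on the diffusion yields $-\beta\Dx\norm{\Dm u^n}^2$. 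The new ingredient is the dispersion: two successive summations by parts (writing $\Dm^2 u_j = \Dm(\Dm u_j)$) together with the telescoping-plus-remainder identity $w_j(w_j - w_{j-1}) = \tfrac{1}{2}(w_j^2 - w_{j-1}^2) + \tfrac{1}{2}(w_j - w_{j-1})^2$ applied with $w_j = \Dm u_j^n$ turns $\gamma\mu(\Dx)\Dx\sum_j u_j^n\Dp\Dm^2 u_j^n$ into the purely dissipative $-\tfrac{\gamma\mu(\Dx)\Dx}{2}\norm{\Dm^2 u^n}^2$. Assembling these pieces yields
\begin{equation*}
\tfrac{1}{2}D^t_+\norm{u^n}^2 + \beta\Dx\norm{\Dm u^n}^2 + \tfrac{\gamma\mu(\Dx)\Dx}{2}\norm{\Dm^2 u^n}^2 \le \tfrac{\Dt}{2}\norm{D^t_+ u^n}^2 + C.
\end{equation*}

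The central step is then to absorb the residual $\tfrac{\Dt}{2}\norm{D^t_+ u^n}^2$. Since the dispersion here is explicit and purely spatial, I would read $D^t_+ u_j^n$ directly off \eqref{eq:scheme_a} and apply the splitting $(a+b+c)^2 \le 2a^2 + 4b^2 + 4c^2$ to obtain
\begin{equation*}
\norm{D^t_+ u^n}^2 \le 2\norm{\Dm\hp^n}^2 + 4\beta^2\Dx^2\norm{\Dp\Dm u^n}^2 + 4\gamma^2\mu(\Dx)^2\norm{\Dp\Dm^2 u^n}^2.
\end{equation*}
The three pieces are then controlled by: (i) Lipschitz continuity of the EO flux together with $k\in L^{\infty}\cap\mathrm{BV}_{\loc}$, giving $\norm{\Dm\hp^n}^2 \le 8\norm{k}_{\infty}^2\norm{f'}_{\infty}^2\norm{\Dm u^n}^2 + C$; (ii) the shift-invariance identity $\norm{\Dp\Dm u^n}^2 = \norm{\Dm^2 u^n}^2$ on $\Z$ (since $\Dp\Dm u_j = \Dm^2 u_{j+1}$); (iii) the elementary inverse-type bound $\norm{\Dp\Dm^2 u^n}^2 \le 4\Dx^{-2}\norm{\Dm^2 u^n}^2$. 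Multiplying by $\Dt/2$ and using $\Dt = \lambda\Dx$, the resulting contributions are bounded (up to fixed numerical constants) by $\lambda\norm{k}_{\infty}^2\norm{f'}_{\infty}^2\,\Dx\norm{\Dm u^n}^2$, $\tfrac{\lambda\beta^2\Dx^2}{\gamma\mu(\Dx)}\cdot\tfrac{\gamma\mu(\Dx)\Dx}{2}\norm{\Dm^2 u^n}^2$, and $\tfrac{\lambda\gamma\mu(\Dx)}{\Dx^2}\cdot\tfrac{\gamma\mu(\Dx)\Dx}{2}\norm{\Dm^2 u^n}^2$; the CFL condition \eqref{eq:cfl_a} is calibrated exactly so that each of these coefficients falls below $\beta-\delta$ or $1-\delta$, which after rearrangement gives the asserted estimate \eqref{eq:std_a}.

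With \eqref{eq:std_a} in hand, multiplying through by $\Dt$, summing over $n=0,\ldots,N-1$, and invoking $\norm{u^0} \le \norm{u_0}_{L^2(\R)} \le C$ immediately deliver \eqref{eq:est_final_a}, \eqref{eq:est_final1_a}, and \eqref{eq:est_final2_a}. The remaining bound \eqref{eq:est_final3_a} on $\Dx^2\Dt\sum_{j,n}(D^t_+ u_j^n)^2$ is then recovered by revisiting the scheme-based pointwise control of $\norm{D^t_+ u^n}^2$, multiplying by $\Dx\Dt$, summing in $n$, and inserting \eqref{eq:est_final1_a}--\eqref{eq:est_final2_a} together with the regime $\mu(\Dx) = \mathcal{O}(\Dx^2)$ so that both $\Dx^2/\mu(\Dx)$ and $\mu(\Dx)/\Dx^2$ are $\mathcal{O}(1)$. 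The principal obstacle is the inverse-type bound $\norm{\Dp\Dm^2 u^n}^2 \le 4\Dx^{-2}\norm{\Dm^2 u^n}^2$: the explicit third-order spatial step costs a factor $\Dx^{-2}$ that must be paid for by the dispersive dissipation $\gamma\mu(\Dx)\Dx\norm{\Dm^2 u^n}^2/2$, and balancing this cost against the competing cost $\beta^2\Dx^2/(\gamma\mu(\Dx))$ coming from the diffusion--temporal coupling is what forces the two-summand structure of the CFL in \eqref{eq:cfl_a} and pins $\mu(\Dx)$ to scale like $\Dx^2$; identifying a Young's splitting that simultaneously closes both constraints without destroying the $\beta\Dx\norm{\Dm u^n}^2$ dissipation is the delicate technical point.
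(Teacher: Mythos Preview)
Your proposal is correct and follows essentially the same route as the paper. The only cosmetic difference is in handling the third-order term when bounding $\norm{D^t_+ u^n}^2$: the paper first rewrites $\Dx\,\Dp\Dm^2 = \Dm(\Dp-\Dm)$ to split the dispersion into two second-order pieces before squaring, whereas you keep $\gamma\mu(\Dx)\Dp\Dm^2 u^n_j$ as a single term and invoke the inverse-type bound $\norm{\Dp\Dm^2 u^n}^2 \le 4\Dx^{-2}\norm{\Dm^2 u^n}^2$; the two procedures are equivalent up to numerical constants, so your constants do not match the stated CFL~\eqref{eq:cfl_a} exactly but the argument closes with the same structure.
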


\begin{proof}
To start with, we multiply the scheme \eqref{eq:scheme_a} by $\Dx \,u^n_j$ 
and subsequently sum over $j \in \Z$. Then, using summation-by-parts formula and the identity \eqref{eq:iden}, we obtain
\begin{align*}
\frac12 D^t_{+} \sum_j \Dx (u^n_j)^2  &- \frac{\Dt}{2} \sum_j \Dx \left( D^t_{+} u^n_j \right)^2\,  + \, \underbrace{\Dx \sum_ju^n_j \Dm\hp^n}_{\mathcal{I}_{\Dx}(f)} \\
& \qquad \qquad \qquad \qquad  = \, -\beta\Dx \sum_j \Dx \abs{\Dm u_j}^2 \, -  \gamma \mu(\Dx) \Dx \sum_j \Dm u^n_j  \Dm (\Dm u^n_j).
\end{align*}
Using the identity 
\begin{equation}
\label{eq:iden2}
\begin{aligned}
u^n_j \Dm u^n_j = \frac{1}{2} \Dm (u^n_j)^2 + \frac{\Dx}{2} (\Dm u^n_j)^2
\end{aligned}
\end{equation}
which is very similar to \eqref{eq:iden}, we get
\begin{align*}
\gamma \mu(\Dx) \Dx \sum_j \Dm u^n_j  \Dm (\Dm u^n_j) &= \underbrace{\gamma \frac{\Dx \mu(\Dx)}{2} \sum_j \Dm (\Dm u^n_j)^2}_{=0} 
+ \frac{\gamma \Dx^2 \mu(\Dx)}{2} \sum_j \left(  \Dm^2 u^n_j \right)^2.
\end{align*}
Furthermore, it has already been shown in the proof of Lemma \ref{lemma2} that
\[
- \mathcal{I}_{\Dx}(f) \leq C
\] 
where $C$ is independent of $\Dx$. Thus, we have the estimate
\begin{equation}
\label{eq:rel1}
\frac{1}{2} D^t_{+} \norm{u^n}^2 \le  \frac{\Dt}{2} \norm{D^t_{+} u^n}^2 - \beta \Dx  \norm{\Dm u^n}^2 -\frac{\gamma \Dx \mu(\Dx)}{2}\norm{\Dm^2 u^n}^2+ C
\end{equation}
Noting that $\Dx \Dp \Dm^2 = \Dm(\Dp - \Dm)$, we use the scheme \eqref{eq:scheme_a} to get the relation
\begin{equation}
\label{eq:rel2}
\begin{aligned}
\frac{\Dt}{2} \norm{D^t_{+} u^n}^2 \le \Dt \norm{\Dm \hp^n}^2 &+ \beta^2 \Dt \Dx^2 \norm{\Dp \Dm u^n}^2 \\
 \qquad \qquad &+ \frac{\gamma^2 \Dt \mu(\Dx)^2}{\Dx^2} \norm{\Dm \Dp u^n}^2 +  \frac{\gamma^2 \Dt \mu(\Dx)^2}{\Dx^2} \norm{\Dm^2 u^n}^2
\end{aligned}
\end{equation}
Now
\begin{equation*}
\begin{aligned}
\Dt \norm{\Dm \hp^n}^2 &= \lambda \sum_j \left(\kp \hat{f}^n_{j+\frac12} - \km \hat{f}^n_{j-\frac12}\right)^2\\
&=\lambda \sum_j \left(\kp \hat{f}^n_{j+\frac12} - \km\hat{f}^n_{j+\frac12} + \km\hat{f}^n_{j+\frac12} - \km  \hat{f}^n_{j-\frac12}\right)^2\\
&\le2 \lambda\sum_j \left(\kp - \km \right)^2 (\hat{f}^n_{j+\frac12})^2 + (\km)^2\left(\hat{f}^n_{j+\frac12} - \hat{f}^n_{j-\frac12}\right)^2\\
\end{aligned}
\end{equation*}
Since EO flux is Lipschitz continuous with Lipschitz constant $\norm{f'}_{\infty}$ (c.f. \eqref{eq:Lip}), we conclude that 
\begin{align*}
\displaystyle{\sup_{j} \sup_{n} \abs{\hat{f}^n_{j+\frac12}}}\leq \norm{f}_{\infty}, \,\, \text{and} \,\,
\abs{\hat{f}^n_{j+\frac12}-\hat{f}^n_{j-\frac12}} \leq \norm{f'}_{\infty} \left( \abs{u^n_j-u^n_{j-1}} + \abs{u^n_{j+1}-u^n_j} \right).
\end{align*}
Thus, we get
\begin{equation}
\label{eq:rel3}
\begin{aligned}
\Dt \norm{\Dm \hp^n}^2 &\le2 \lambda \norm{k}\norm{f}^2 \abs{k}_{BV} +  4\Dt \norm{k}^2\norm{f^\prime}^2 \left(\norm{\Dm u^n}^2 + \norm{\Dp u^n}^2 \right)\\
\end{aligned}
\end{equation}
Using relations \eqref{eq:rel1},\eqref{eq:rel2} and \eqref{eq:rel3} in tandem with the fact that $\norm{\Dm(.)^n} = \norm{\Dp(.)^n}$, we get the estimate
\begin{equation}
\label{eq:rel4}
\begin{aligned}
\frac{1}{2} D^t_{+} \norm{u^n}^2 +& \left( 1  - 2 \frac{\beta^2 \lambda \Dx^2}{\gamma \mu(\Dx)} - 4 \frac{\gamma\lambda  \mu(\Dx)}{\Dx^2}\right) \frac{\gamma \Dx \mu(\Dx)}{2}\norm{\Dm^2 u^n}^2 \\
& \qquad \qquad \qquad \qquad \qquad + \left(\beta - 8 \lambda \norm{k}^2 \norm{f^\prime}^2 \right) \Dx \norm{\Dm u^n}^2 \le C
\end{aligned}
\end{equation}
Choosing $\lambda$ in accordance to \eqref{eq:cfl_a} ensures that for some $\delta \in (0,\min{(\beta,1)})$
\begin{align*}
1  - 2 \frac{\beta^2\lambda \Dx^2}{\gamma \mu(\Dx)} - 4 \frac{\gamma \lambda \mu(\Dx)}{\Dx^2} > \delta, \,\,\, \text{and} \,\, \beta - 8 \lambda \norm{k}^2 \norm{f^\prime}^2 > \delta,
\end{align*}
thus leading to the estimate \eqref{eq:std_a}.

In order to prove estimates \eqref{eq:est_final_a}, \eqref{eq:est_final1_a} and  \eqref{eq:est_final2_a}, we multiply the inequality \eqref{eq:std_a} by $\Dt$ and subsequently sum over all $n=0,1,\cdots, N-1$ to reach 
\begin{equation*}
\begin{aligned}
\frac{1}{2} \norm{u^N}^2 & + \delta \frac{\gamma \Dx \Dt \mu(\Dx)}{2} \sum_{n}\norm{\Dm^2 u^n}^2  + \delta \Dx\Dt \sum_{n} \norm{\Dm u^n}^2  \le \frac{1}{2} \norm{u_0}^2 + C,
\end{aligned}
\end{equation*}
This essentially finishes the proof of the a priori bounds \eqref{eq:est_final_a}, \eqref{eq:est_final1_a} and  \eqref{eq:est_final2_a} . Using these estimates and the relation \eqref{eq:rel2}, a direct computation shows that \eqref{eq:est_final3_a} holds. This completes the proof.
\end{proof}


Note that, making use of these a priori estimates, 
an appropriate ``convergence theorem'' can be formulated, which guarantees the convergence of
approximate solutions ${\lbrace u_{\Dx} \rbrace}_{\Dx>0}$, 
generated by the scheme \eqref{eq:scheme_a}-\eqref{eq:scheme_initial_a}, to 
a weak solution of \eqref{eq:discont}.

\begin{theorem}
\label{thm:theorem1_a}
Let $u_{\Dx}$ be a sequence of approximations generated via the scheme \eqref{eq:scheme_a}-\eqref{eq:scheme_initial_a} with $\mu(\Dx) = \bigO{\Dx^2}$.
Then there exists a function $u \in L^{\infty}([0,T];L^1_{\loc}(\R))$ and a sequence $\lbrace \Dx_j \rbrace$ of
$\lbrace \Dx \rbrace$ such that $u_{\Dx_j} \mapsto u$ as $\Dx_j \downarrow 0$. Moreover, the function $u$
is a weak solution to \eqref{eq:discont}.
\end{theorem}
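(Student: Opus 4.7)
The plan is to mirror the proof of Theorem~\ref{thm:theorem1} almost verbatim, with the only genuinely new work taking place in the terms that involve the purely spatial dispersive operator $\mu(\Dx)\Dp\Dm^2 u^n_j$ rather than the mixed space-time operator $\mu(\Dx)D^t_+\Dp\Dm u^n_j$. As in the previous section, the proof splits into two independent pieces: first establish $H^{-1}_{\loc}$ compactness of the entropy-production sequences $\seq{\eta_i(k(x),u_\Dx)_t+q_i(k(x),u_\Dx)_x}_{\Dx>0}$ for $i=1,2$ so that Theorem~\ref{thm:compcomp} yields a subsequence $u_{\Dx_j}\to u$ a.e. (and hence in $L^p_{\loc}$ by the uniform bound from \ref{def:b1}); second, pass to the limit in the scheme in the spirit of Lax-Wendroff to identify $u$ as a weak solution of \eqref{eq:discont}.

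For the compactness step, I would replay the decomposition of Lemma~\ref{lem:compact} into $\mathcal{E}^1_\Dx$ and $\mathcal{E}^2_\Dx$, and then into the sub-pieces $\mathcal{E}^{1,1,1},\mathcal{E}^{1,1,2},\mathcal{E}^{1,1,3},\mathcal{E}^{1,1,4}$. All the estimates on $\mathcal{E}^2_\Dx$, $\mathcal{E}^{1,2}_\Dx$, $\mathcal{E}^{1,1,1}_\Dx$, $\mathcal{E}^{1,1,2}_\Dx$, and $\mathcal{E}^{1,1,4}_\Dx$ depend only on \eqref{eq:est_final_a}, \eqref{eq:est_final1_a} and \eqref{eq:est_final3_a}, and so carry over without change, since Lemma~\ref{lemma2_a} furnishes the same bounds. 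The only term requiring a new argument is the analogue of $\mathcal{E}^{1,1,3}_\Dx$, which for scheme \eqref{eq:scheme_a} reads
\begin{align*}
\langle\mathcal{E}^{1,1,3}_\Dx(\eta,q),\varphi\rangle=\gamma\sum_{j,n}\int_{t^n}^{t^{n+1}}\int_{I_j}\eta_u(\km,u^n_j)\,\mu(\Dx)\,(\Dp\Dm^2 u^n_j)\,\varphi\,dx\,dt.
\end{align*}
I would apply summation by parts twice to move both spatial differences onto the test factor, write the resulting sum as a sum over $j$ of $\Dm u^n_j$ against a discrete second derivative of $\eta_u(\km,u^n_j)\Phi_j$ (expanded via the discrete Leibniz rule into terms involving $\Dm\km$, $\Dm u^n_{j\pm 1}$, and $\Dm\Phi_j$), then bound each piece using \eqref{eq:est_final2_a} and the Cauchy-Schwarz estimate \eqref{estimate-a}. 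The key gain is that $\mu(\Dx)\Dx^2\Dt\sum(\Dm^2 u^n_j)^2\le C$, so the $H^{-1}_{\loc}$ estimates produce factors of $\mu(\Dx)^{1/2}$ or $\mu(\Dx)^{1/2}/\Dx^{1/2}$, which tend to zero under $\mu(\Dx)=\bigO{\Dx^2}$. The $BV(k)$ pieces stay bounded in $\CMloc(\Pi_T)$ exactly as before, and Lemma~\ref{lem:Murat} then gives $H^{-1}_{\loc}$ compactness.

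For the Lax-Wendroff step, I would multiply \eqref{eq:scheme_a} by $\Dx\Dt\psi^n_j$ and sum. The time-derivative term, the flux term, and the second-order diffusive term $\beta\Dx\Dp\Dm u^n_j$ are handled word-for-word as in Theorem~\ref{thm:theorem1}. The only new piece is
\begin{align*}
\gamma\,\mu(\Dx)\,\Dx\Dt\sum_{j,n}\psi^n_j\,\Dp\Dm^2 u^n_j
=\gamma\,\mu(\Dx)\,\Dx\Dt\sum_{j,n}(\Dm u^n_j)(\Dp\Dm\psi^n_j),
\end{align*}
after two summations by parts. Applying Cauchy-Schwarz and the a priori bound \eqref{eq:est_final1_a} yields a factor $\mu(\Dx)\bigl(\Dx^2\Dt\sum(\Dm u^n_j)^2\bigr)^{1/2}\Dx^{-1/2}\norm{\psi_{xx}}_{L^2}=\bigO{\mu(\Dx)/\Dx^{1/2}}$, which vanishes as $\Dx\downarrow 0$ since $\mu(\Dx)=\bigO{\Dx^2}$. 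Combining with the identifications of the limiting flux term already carried out in the proof of Theorem~\ref{thm:theorem1} produces the weak formulation \eqref{eq:weak}.

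The main obstacle, as in the mixed derivative case, is the dispersive term: one must avoid any naive bound that would cost a factor of $\mu(\Dx)/\Dx^2$ (which is only $\bigO{1}$ under $\mu(\Dx)=\bigO{\Dx^2}$). This is achieved by performing two summations by parts before estimating, which together with \eqref{eq:est_final2_a} leaves a harmless power of $\mu(\Dx)$ on top. All remaining terms reuse the machinery already developed in Sections~\ref{sec:energy}-\ref{sec:convergence} \emph{mutatis mutandis}.
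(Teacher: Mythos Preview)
Your proposal is correct and follows essentially the same route as the paper, which simply says the proof is that of Theorem~\ref{thm:theorem1} with the term $\mu(\Dx)\Dp\Dm^2 u^n_j$ handled in place of $\mu(\Dx)D^t_+\Dp\Dm u^n_j$, using \eqref{eq:est_final2_a} in lieu of \eqref{eq:est_final3}.

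One small point of tidying: for the compactness piece $\mathcal{E}^{1,1,3}_\Dx$, the direct analogue of Lemma~\ref{lem:compact5} performs \emph{one} summation by parts, yielding $-\mu(\Dx)\sum_j \Dm^2 u^n_j\cdot \Dm(\eta_u(\km,u^n_j)\Phi_j)$, and then splits via Leibniz and bounds with \eqref{eq:est_final2_a} exactly as $\mathcal{D}_\Dx,\mathcal{G}_\Dx,\mathcal{F}_\Dx$ were bounded there. Your description instead does two summations by parts (leaving $\Dm u^n_j$) but then invokes \eqref{eq:est_final2_a}; after two integrations by parts the natural estimate is \eqref{eq:est_final1_a}, not \eqref{eq:est_final2_a}. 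Either variant closes, but the one-step version is both cleaner and the literal parallel the paper intends. Your treatment of the dispersive term in the Lax--Wendroff step is fine as written.
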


\begin{proof}
The proof of this theorem is very much similar to the proof of the 
Theorem~\ref{thm:theorem1}, except the analysis of the terms involving $\Dp \Dm^2 u^n_j$. 
However, these term can be treated like the term $\Dpt\Dp \Dm u^n_j$ 
in section~\ref{sec:convergence}, but we need to use the a priori bound \eqref{eq:est_final2_a} instead of \eqref{eq:est_final3}. For brevity of exposition, we omit the details of the proof.
\end{proof}
A similar result, in view of the analysis in section~\ref{sec:entropy} 
and above priori estimates, can be obtained regarding 
the convergence of ${\lbrace u_{\Dx} \rbrace}_{\Dx>0}$, 
generated by the scheme \eqref{eq:scheme_a}-\eqref{eq:scheme_initial_a}, 
to the unique entropy solution of \eqref{eq:discont}. 
\begin{theorem}
\label{thm:theorem2_a}
Let $u(x,t)$ be a weak solution constructed as the limit of the approximation $u_{\Dx}$ generated
by the scheme \eqref{eq:scheme_a}-\eqref{eq:scheme_initial_a} with 
$ \mu(\Dx)= \scalebox{1.1}{$\scriptstyle\mathcal{O}$}({\Dx^2})$. 
Let $0 \le \psi \in \mathcal{D}(\R \times [0,T]) $. Then the following
entropy inequality is satisfied for all $c \in \R$
\begin{align*}
\int_{\R} \int_0^T & \left( \abs{u-c} \psi_t + \sgn{ u-c}  k(x) \left(f(u) -f(c) \right) \psi_x \right) \,dx\,dt 
 + \int_{\R} \abs{u_0 -c} \psi(x,0)\,dx \\
 & \qquad +  \int_{\R \setminus \Omega} \int_0^T \sgn{u-c} k'(x) \,f(c)\,\psi \,dx\,dt 
+  \sum_{m=1}^{M} \int_0^T \abs{ f(c) (k_m^{+} - k_m^{-})} \psi(\xi_m, t) \,dt \ge 0.
\end{align*}
\end{theorem}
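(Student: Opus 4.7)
The plan is to mirror the strategy used for Theorem~\ref{thm:theorem2}, proceeding in three stages: (i) establish a smooth-entropy inequality away from the jumps of $k$, i.e., the analogue of Lemma~\ref{thm:entropy1} for the scheme \eqref{eq:scheme_a}; (ii) specialize to $\eta(u)=|u-c|$ via the standard smooth approximation to obtain the Kru\v{z}kov inequality on $\R\setminus\Omega$; (iii) transfer the contributions across the finitely many discontinuity curves $\omega_m$ by a verbatim adaptation of \cite[Lemma~4.2, Lemma~4.4]{kenneth1}. Stages (ii) and (iii) require no new ideas once stage (i) is in place, so the real work lies in (i).

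For stage (i), I would write the scheme as
\begin{equation*}
u^{n+1}_j = w^n_j + \beta\,\Dx\Dt\,\Dp\Dm u^n_j + \gamma\,\Dt\,\mu(\Dx)\,\Dp\Dm^2 u^n_j,
\qquad w^n_j := u^n_j-\lambda\,\Delta_+\!\bigl(\km\hat f^n_{j-\frac12}\bigr),
\end{equation*}
and invoke the monotone-scheme entropy inequality of \cite[Lemma 5.7]{Triang_Cocliteetal} for $w^n_j$. After multiplying by $\Dx\Dt\psi^n_j$ with $0\le\psi\in C_0^\infty$ supported in $(\R\setminus\Omega)\times[0,T)$ and summing, the flux, source and initial terms converge exactly as in Lemma~\ref{thm:entropy1}. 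What is new is the dispersive term
\begin{equation*}
\mathcal{D}_\Dx := \gamma\,\Dx\Dt\sum_{j,n}\mu(\Dx)\,\psi^n_j\,\eta'(u^n_j)\,\Dp\Dm^2 u^n_j
\end{equation*}
together with the quadratic remainders $\mathcal{Q}^3_\Dx,\mathcal{Q}^4_\Dx$ that arise from the Taylor expansion of $\eta^{n+1}_j-\eta(w^n_j)$.

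For $\mathcal{D}_\Dx$, one summation-by-parts and the discrete Leibniz rule split it into two pieces: one multiplying $\Dm\psi^n_j$ by $\eta'(u^n_j)\,\Dm^2u^n_j$, and one multiplying $\psi^n_j$ by $\eta''(\theta)\,(\Dm u^n_j)(\Dm^2 u^n_j)$. Cauchy--Schwarz combined with the a priori bound \eqref{eq:est_final2_a} (which gives $O(1)$ for $\Dt\Dx^2\mu(\Dx)\sum_{j,n}(\Dm^2 u^n_j)^2$) and \eqref{eq:est_final1_a} controls both pieces by $C\,\mu(\Dx)^{1/2}/\Dx^{1/2}$ times a Sobolev norm of $\psi$; since $\mu(\Dx)=\smallO{\Dx^2}$, this vanishes as $\Dx\downarrow 0$. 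For $\mathcal{Q}^4_\Dx$, the identity $u^{n+1}_j-w^n_j = \beta\Dx\Dt\,\Dp\Dm u^n_j+\gamma\Dt\mu(\Dx)\,\Dp\Dm^2 u^n_j$ yields a diffusive remainder identical to the one handled in Lemma~\ref{thm:entropy1}, plus a new dispersive remainder whose $L^1$ norm is bounded by $\gamma^2\,\Dt\,\mu(\Dx)/\Dx^2$ times $\Dt\Dx^2\mu(\Dx)\sum_{j,n}(\Dm^2 u^n_j)^2$, hence $O(\Dt\,\mu(\Dx)/\Dx^2)\to 0$ under the CFL \eqref{eq:cfl_a}. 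A parallel argument, using the discrete chain rule applied to $\Delta_+(\km\fhm)$, handles the cross term $\mathcal{Q}^3_\Dx$; the non-negative contribution $\mathcal{E}^4_\Dx = \beta\Dx^2\Dt\sum\psi^n_j\eta''(\theta)(\Dp u^n_j)^2$ is discarded thanks to $\psi\ge 0$ and convexity of $\eta$, preserving the correct sign.

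The main obstacle is bookkeeping the scaling $\Dt\mu(\Dx)/\Dx^2$ in $\mathcal{Q}^4_\Dx$: this ratio is controlled by the CFL condition \eqref{eq:cfl_a} (which forces $\lambda\,\mu(\Dx)/\Dx^2$ to be bounded and hence $\Dt\mu(\Dx)/\Dx^2=\lambda\mu(\Dx)/\Dx\to 0$ as $\Dx\downarrow 0$, since $\mu(\Dx)=\smallO{\Dx^2}$ and $\lambda$ must in fact tend to $0$). Once the smooth-entropy inequality is secured, stages (ii) and (iii) are immediate: take a mollification of $|u-c|$ as in Lemma~\ref{thm:entropy11}, then localize the test function around each curve $\omega_m$ exactly as in Lemma~\ref{thm:entropy12} to pick up the boundary terms $\sum_m\int_0^T |f(c)(k_m^+-k_m^-)|\psi(\xi_m,t)\,dt$, completing the proof of Theorem~\ref{thm:theorem2_a}.
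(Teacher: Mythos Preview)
Your proposal is correct and follows essentially the same route as the paper, which simply defers to ``similar arguments used in the proof of Theorem~\ref{thm:theorem2}''; you have fleshed out precisely those arguments, correctly identifying that the only new work is replacing the mixed-derivative dispersive term $\gamma\mu(\Dx)D^t_+\Dp\Dm u^n_j$ by the purely spatial one $\gamma\mu(\Dx)\Dp\Dm^2 u^n_j$ and controlling it via the bound \eqref{eq:est_final2_a} in place of \eqref{eq:est_final3}. A couple of your scaling factors are slightly off in the bookkeeping (for instance, the remainder in $\mathcal{Q}^4_\Dx$ is really $O(\lambda\,\mu(\Dx)/\Dx^2)$ rather than $O(\Dt\,\mu(\Dx)/\Dx^2)$, and the second piece of $\mathcal{D}_\Dx$ picks up $\mu(\Dx)^{1/2}/\Dx$ rather than $\mu(\Dx)^{1/2}/\Dx^{1/2}$), but under $\mu(\Dx)=\smallO{\Dx^2}$ and the CFL \eqref{eq:cfl_a} (which forces $\lambda=O(\mu(\Dx)/\Dx^2)\to 0$) both corrected quantities still vanish, so the argument goes through unchanged.
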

\begin{proof}
This can be achieved using similar arguments used in the proof of Theorem~\ref{thm:theorem2}.
\end{proof}


\section{Numerical Experiments}
\label{sec:numerical}
We present a few numerical results to substantiate the results we have shown in the previous sections. We consider the capillarity problem approximated by the scheme \eqref{eq:scheme}, as well as the diffusive-dispersive problem approximated by \eqref{eq:scheme_a}. Note that while the former is approximated by an implicit type scheme, the latter is an explicit-in-time scheme. 

\subsection{Capillarity approximation}
We consider the flow of two phases in a heterogeneous porous medium, in the limit of vanishing dynamic capillary pressure. The model equation is given by
\begin{align}
\label{eq:cap}
 u_t + f (k(x),u )_x = \eps \beta \,(g(k(x),u) u_x)_x + \mu(\eps) \gamma \,(h(k(x),u) u_{xt})_x, &\ \ x \in  \R \times (0,T),
\end{align}
where $g,h: \R^2 \rightarrow \R$ are assumed to be smooth functions such that
\begin{align*}
\alpha  \le g(.,.),h(.,.)
\end{align*}
for some constant $\alpha > 0$. This model has also been considered and numerically analysed in \cite{sid}, \cite{Kissling} and \cite{E. vanDuijn}. Note that, we have theoretically shown 
the convergence for the special case when $g \equiv h \equiv 1$. 
However, we mention that this cosmetic changes in the
equation has no effect on the central idea of the paper and a straightforwardly incremental 
modification of our analysis can be adopted to analyze the equation \eqref{eq:cap}. We have decide to work
with this equation because of the availability of results for such equations which help
us to compare our results.

As done in \cite{sid}, we choose the various quantititis in the model as follows:
\begin{equation*}
\begin{aligned}
f(k(x),u) &= \frac{z^w( 1 - k(x)z^o)}{z^w + z^o}\\
g(k(x),u) & = k(x)\,g_1(u) = k(x)  \frac{z^w z^o}{z^w + z^o} P^\prime(u)\\
h(k(x),u) & = k(x)\,h_1(u) = k(x) \frac{ z^w z^o}{z^w + z^o} \\
P(u)       &= \left(u^{-\frac{4}{3}} - 1 \right)^\frac{1}{4}
\end{aligned}
\end{equation*}
where $z^w = u^2$ and $z^o = (1 - u)^2$. From the physical point of view, $u(x,t)$ and $(1 - u(x,t))$ represent the water and oil saturations respectively, while $k(x)$ corresponds to the rock  permeability. The corresponding modified numerical approximation is chosen to be
\begin{equation*}
\begin{aligned}
D^t_{+} u^n_j + \Dm \hp^n &= \beta \Dx \Dp \left( \km  \frac{\left(g_1(u^n_j) + g_1(u^n_{j-1})\right)}{2} \Dm u^n_j\right)  \\
& \qquad \qquad \qquad \qquad+ \gamma \mu(\Dx)  \Dp \left( \km  \frac{\left(h_1(u^n_j) + h_1(u^n_{j-1})\right)}{2} D^t_{+}\Dm u^n_j\right) 
\end{aligned}
\end{equation*}
As stated earlier, we can replace the EO numerical flux with any other monotone flux, with a similar covergence analysis following through. For simplicity, we choose the Lax Friedrichs flux for the present model. The spatial domain is $[0,2]$ with an initial solution profile 
\begin{align*}
u_0(x) = \begin{cases}
              0.8 \qquad \text{for } x\le0.25,\\
              0.2 \qquad \text{for } x>0.25
              \end{cases}
\end{align*} 
Furthermore, we choose $\beta = 6$, $\gamma=36$ and CFL $0.3$, with the final time of simulation being $T = 0.6$. 

\subsubsection{Continuous flux} We first consider the scenario when $k \equiv 1$. This corresponds to the flow in a homogeneous rock structure. When the dispersion coefficient is chosen as $\mu(\Dx) = \Dx^2$, the numerical solution converges to a weak solution of \eqref{eq:cap}, as shown in Figure \ref{fig:cap_ref}. The weak solution consists of a leading classical shock wave and a trailing non-classical shock wave, with an intermediate state in between. However, when the dispersion coefficient is chosen to $\mu(\Dx) = \Dx^3$, the solution converges to the entropy solution, as shown in Figure \ref{fig:cap_k_const}. This is in accordance with Theorem \ref{thm:main}.

\begin{figure}
\begin{center}
\includegraphics[width=0.60\textwidth]{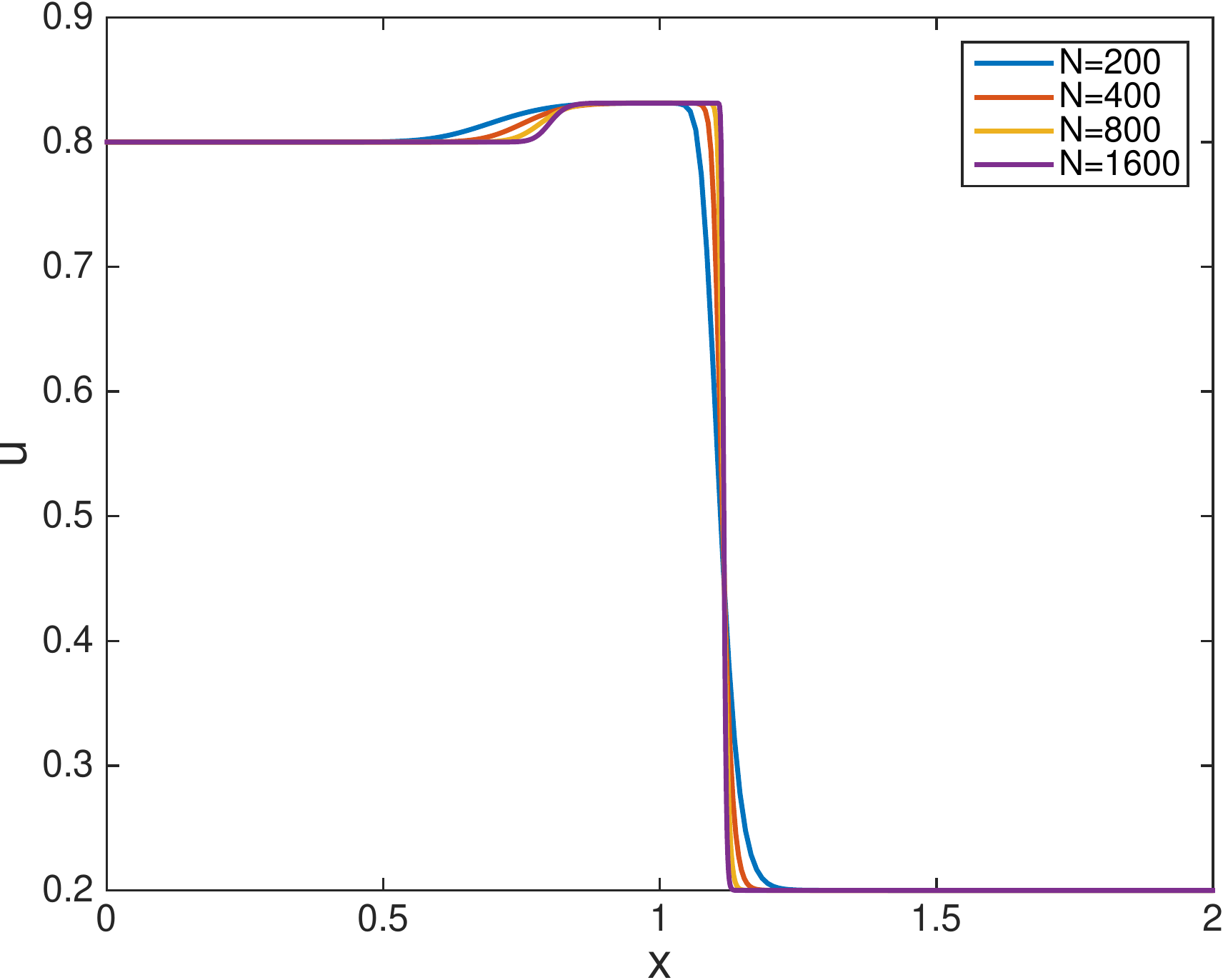}
\caption{Two phase flow through a homogeous medium ($k(x) \equiv 1$) at time $T=0.6$ with $\mu(\Dx) = \Dx^2$. Mesh refinement study indicates convergence to a non-classical weak solution of the underlying conservation law.}
\label{fig:cap_ref}
\end{center}
\end{figure}

\begin{figure}
\begin{center}
\includegraphics[width=0.60\textwidth]{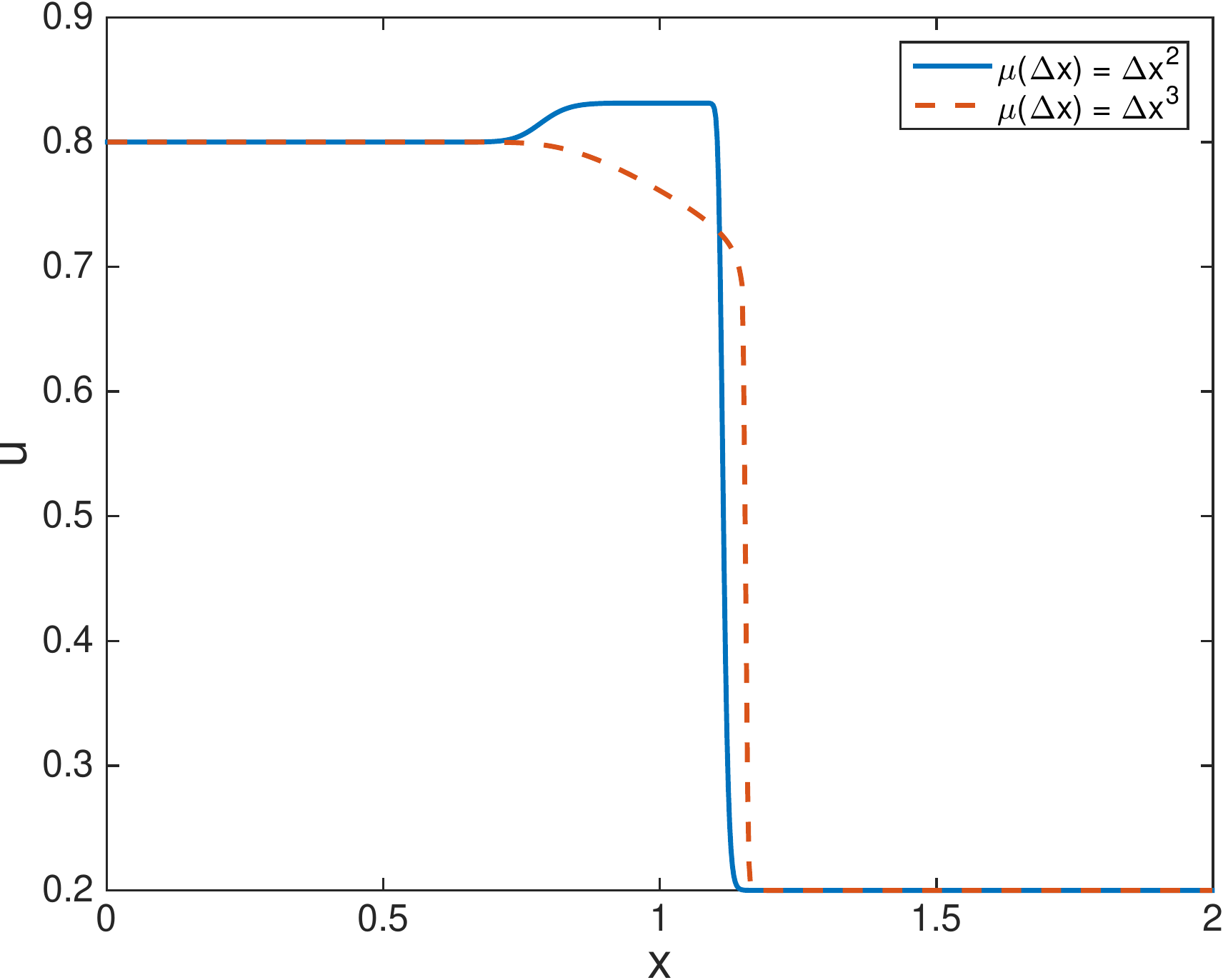}
\caption{Two phase flow through a homogeous medium ($k(x) \equiv 1$) at time $T=0.6$. A non-classical weak solution is obtained for $\mu(\Dx) = \Dx^2$, while the unqiue entropy solution is obtained for $\mu(\Dx) = \Dx^3$.}
\label{fig:cap_k_const}
\end{center}
\end{figure}

\subsubsection{Discontinuous flux} We next consider the scenario depicting the flow through a heterogeneous medium. We chose the rock permeability as
\begin{align*}
k(x) =  \begin{cases}
              1.1 \qquad \text{for } x\le0.6\\
              1.4 \qquad \text{for } x>0.6
              \end{cases}
\end{align*} 
which corresponds to two rock types with a sharp interface at $x=0.6$. As before we first consider the solution by setting $\mu(\Dx) = \Dx^2$, which is shown in Figure \ref{fig:cap_k_var}. The numerical approximation is a weak solution consisting of a leading classical shock wave and a trailing non-classical shock wave separated by an intermediate state, and discontinuity at $x=0.6$ corresponding to rock structure. Once again,the non-classical shock disappears when the dispersion coefficient is chosen as $\mu(\Dx) = \Dx^{3}$, with the solution approximating the entropy solution.

\begin{figure}
\begin{center}
\includegraphics[width=0.60\textwidth]{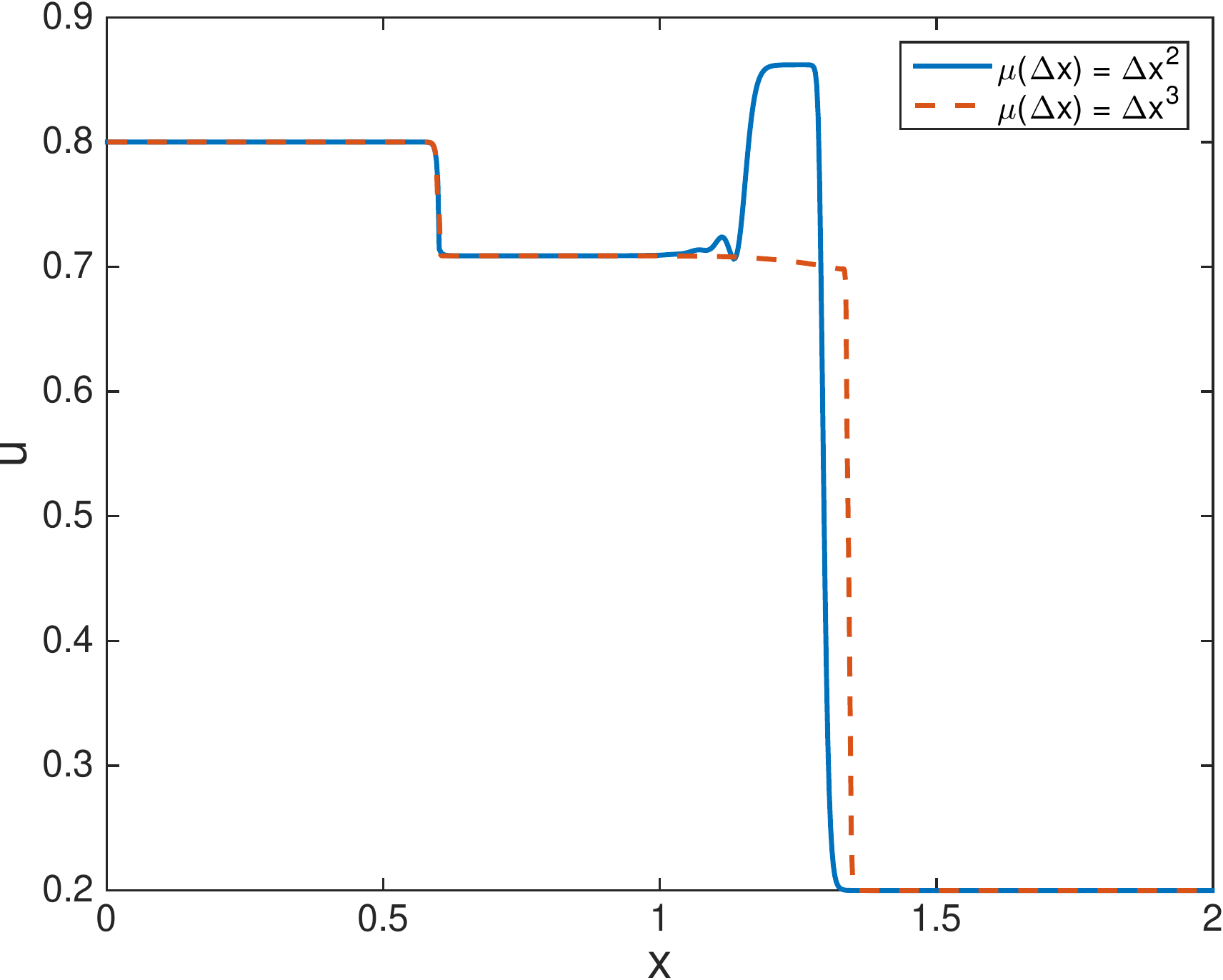}
\caption{Two phase flow through a hetergeneous medium at time $T=0.6$. A non-classical weak solution is obtained for $\mu(\Dx) = \Dx^2$, while the unqiue entropy solution is obtained for $\mu(\Dx) = \Dx^3$.}
\label{fig:cap_k_var}
\end{center}
\end{figure}

\subsection{Diffusive-dispersive model}
As we have already mentioned, the convergence analysis for the diffusive-dispersive equation \eqref{eq:system_a}
is very similar to the other. We work with the flux function
\begin{equation*}
f(k(x),u) = k(x)(u^3 - u)
\end{equation*}
and the numerical scheme given by \eqref{eq:scheme_initial_a}. The continuous flux version of this problem has been studied numerically in \cite{chalons} as well. The spatial domain is $[-0.5,0.5]$ with the final simulation time being $T=0.01$. The initial profile of the solution for this problem is taken to be
\begin{align*}
u_0(x) = \begin{cases}
              4 \qquad \text{for } x\le0,\\
              -2 \qquad \text{for } x>0
              \end{cases}
\end{align*} 
with the model parameters chosen as $\beta = 5$ and $\gamma = 20$. For this problem, the EO numerical flux is used.

\subsubsection{Continuous flux} We first work with a continuous flux by choosing $k(x) \equiv 1$. For $\mu(\Dx) = \Dx^2$, the numerical solution converges to a weak solution (see Figure \ref{fig:dd_ref}) consisting of trailing classical shock wave and a leading non-classical shock wave separated by an intermediate state. Figure \ref{fig:dd_k_cont} shows that for the choice $\mu(\Dx) = \Dx^{2.5}$, we get an approximation to the unique entropy solution, which corresponds to a single shock wave satisfying Ole\u{i}nik's entropy condition \cite{Oleinik}. 

\begin{figure}
\begin{center}
\includegraphics[width=0.60\textwidth]{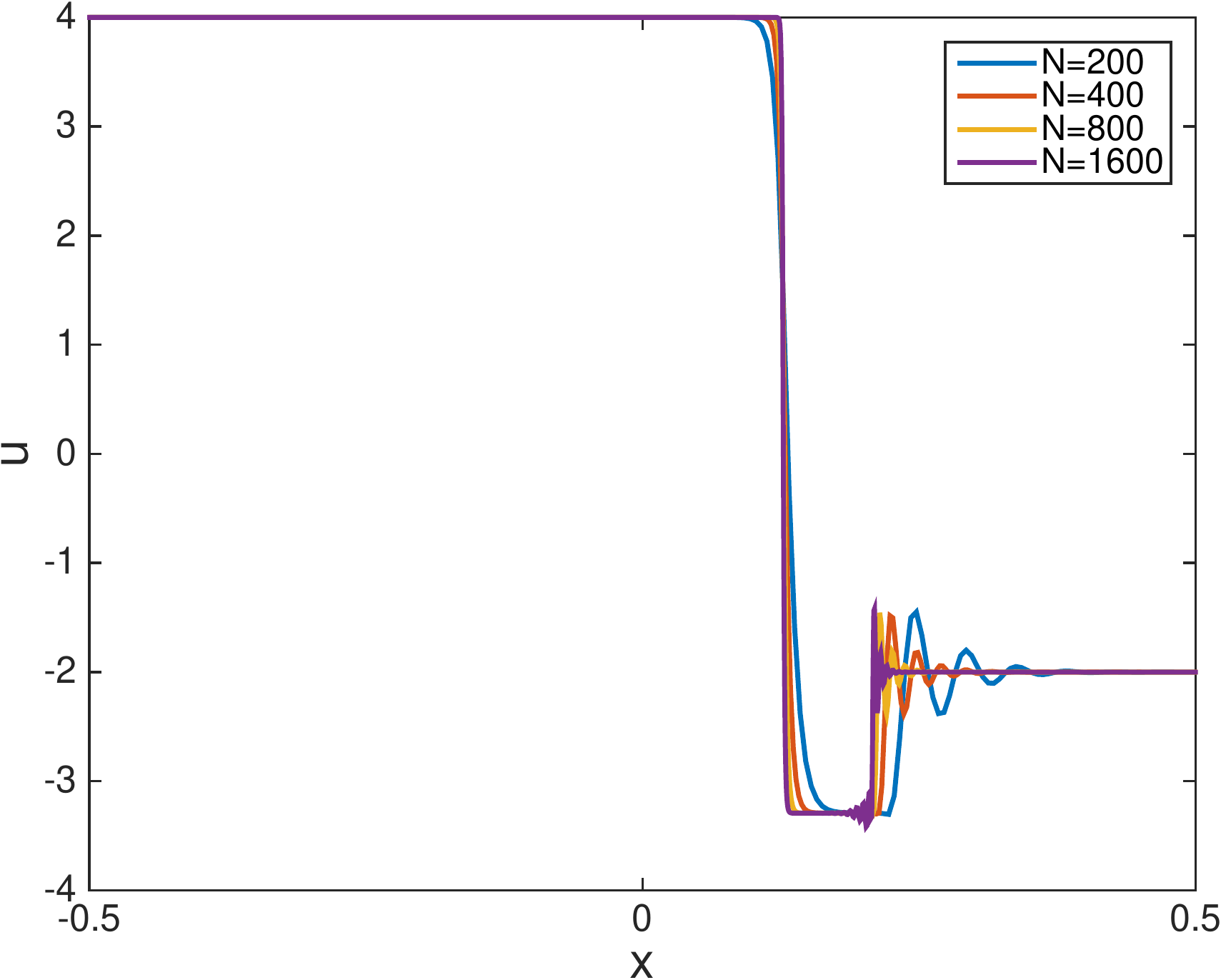}
\caption{Diffusive-dispersive model with $k(x) \equiv 1$, at time $T=0.01$ and $\mu(\Dx) = \Dx^2$. Mesh refinement study indicates convergence to a non-classical weak solution of the underlying conservation law.}
\label{fig:dd_ref}
\end{center}
\end{figure}

\begin{figure}
\begin{center}
\includegraphics[width=0.60\textwidth]{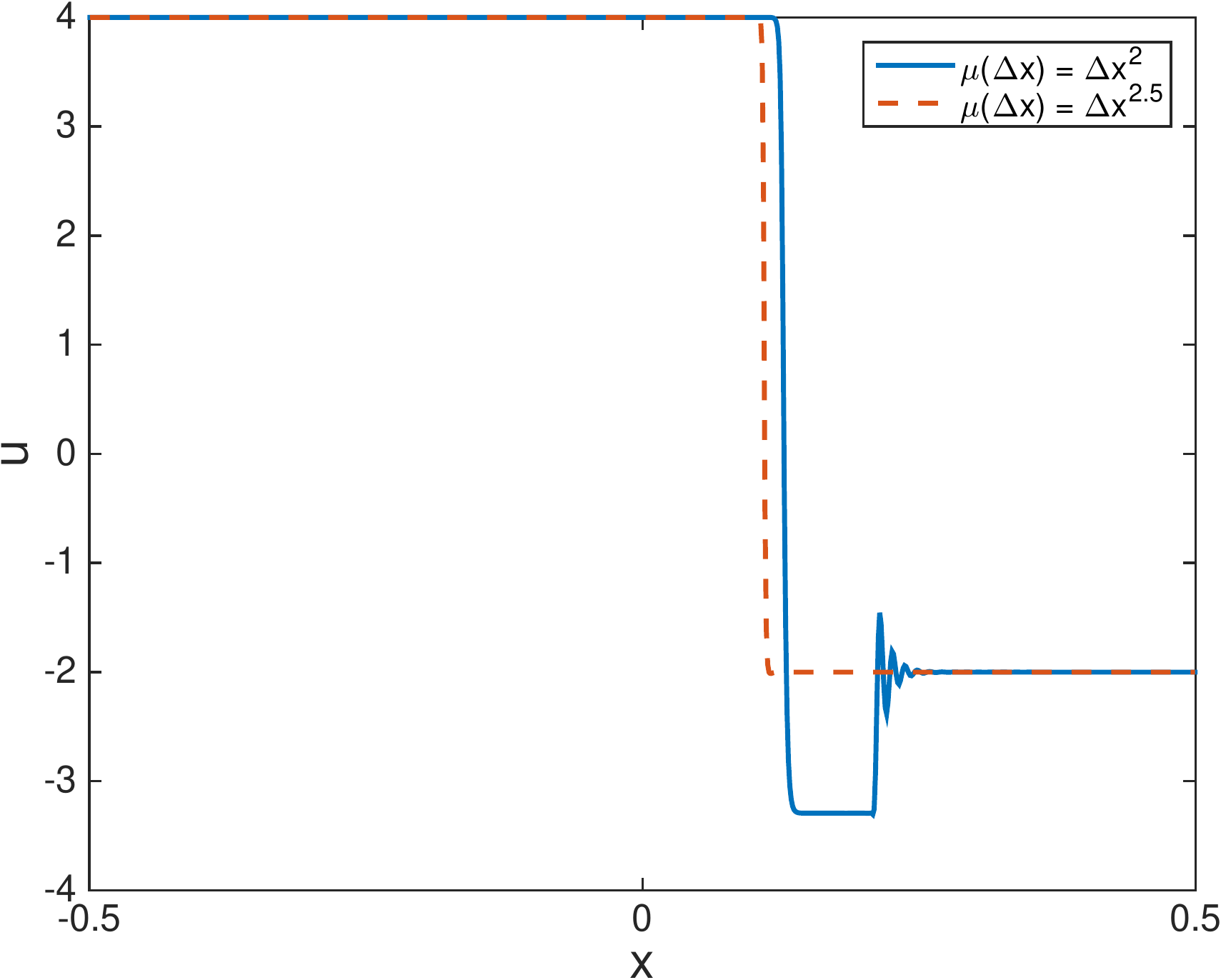}
\caption{Diffusive-dispersive model with $k(x) \equiv 1$, at time $T=0.01$. A non-classical weak solution is obtained for $\mu(\Dx) = \Dx^2$, while the unqiue entropy solution is obtained for $\mu(\Dx) = \Dx^3$.}
\label{fig:dd_k_cont}
\end{center}
\end{figure}

\subsubsection{Discontinuous flux} We work with a discontinuous flux characterised by
\begin{align*}
k(x) =  \begin{cases}
              1.1 \qquad \text{for } x\le0.1\\
              0.9 \qquad \text{for } x>0.1
              \end{cases}
\end{align*} 
The numerical approximation shown in Figure \ref{fig:dd_k_var} with $\mu(\Dx) = \Dx^2$, is a weak solution consisting of a trailing classical shock wave and a leading non-classical shock wave separated by an intermediate state. In addition, there is a standing discontinuity at $x=0.1$ corresponding to discontinuiuty in the flux. The non-classical shock disappears when the dispersion coefficient is chosen as $\mu(\Dx) = \Dx^{2.5}$, with the solution approximating the entropy solution.
\begin{figure}
\begin{center}
\includegraphics[width=0.60\textwidth]{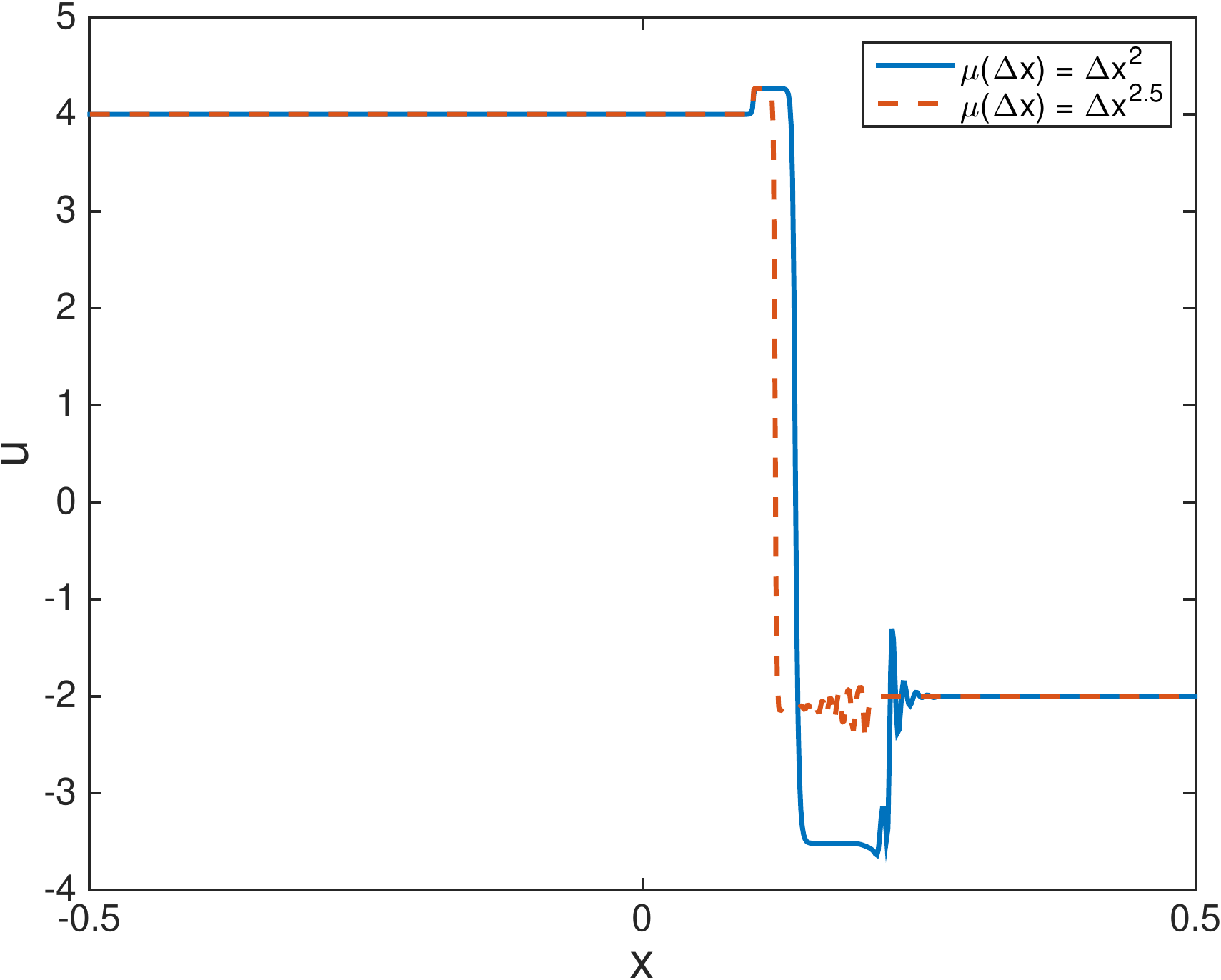}
\caption{Diffusive-dispersive model with a discontinuous flux, at time $T=0.01$. A non-classical weak solution is obtained for $\mu(\Dx) = \Dx^2$, while the unqiue entropy solution is obtained for $\mu(\Dx) = \Dx^3$.}
\label{fig:dd_k_var}
\end{center}
\end{figure}

\end{document}